\newtheoremstyle{named-normal}{}{}{\normalfont}{}{\bfseries}{.}{.5em}{#1 (\thmnote{#3})}
\newtheoremstyle{named-italic}{}{}{\itshape}{}{\bfseries}{.}{.5em}{#1 (\thmnote{#3})}
\declaretheorem[style=named-normal,numbered=yes,name=Assumption]{named-asmp}
\declaretheorem[style=named-normal,numbered=yes,name=Condition]{named-cond}
\declaretheorem[style=named-normal,numbered=yes,name=Example]{named-exmp}
	\def \calB {\mathcal{B}}		
\def \bbD {\mathbb{D}}			
\def \bbE {\mathbb{E}}
	\def \calJ {\mathcal{J}}		
	\def \calK {\mathcal{K}}		
\def \bbL {\mathbb{L}}		\def \scrL {\mathscr{L}}	
	\def \calN {\mathcal{N}}		
\def \bbP {\mathbb{P}}			
\def \bbQ {\mathbb{Q}}			
\def \bbR {\mathbb{R}}
	\def \calX {\mathcal{X}}
\def \bbOne{\mathbbm{1}}
			\def \whatG {\widehat{G}}
\def \tilK {\tilde{K}}
		\def \hatV {\hat{V}}	\def \whatV {\widehat{V}}
\def \tilg {\tilde{g}}
		\def \hatgamma {\hat{\gamma}}	\def \whatgamma {\widehat{\gamma}}	\def\bargamma{\bar{\gamma}}
		\def \hattheta {\hat{\theta}}	\def \whattheta {\widehat{\theta}}
		\def \hatnu {\hat{\nu}}		
				\def\barxi{\bar{\xi}}
\def \oP {o_{\bbP}}
\def \OP {O_{\bbP}}
\def \Var {\text{Var}}
\def \conP {\overset{\bbP}\longrightarrow}
\def \conL {\overset{\scrL}\longrightarrow}
\def \oPstar {o_{\bbP^\ast}}
\def \hatV {\widehat{V}}
\def \vect {\textrm{vec}}
\DeclareMathOperator*{\argmax}{arg\,max}
\DeclareMathAlphabet{\mathpzc}{OT1}{pzc}{m}{it}
\numberwithin{equation}{section}
\theoremstyle{definition}
\newtheorem{remark}{Remark}
\newtheorem{assumption}{Assumption}
\theoremstyle{plain}
\newtheorem{theorem}{Theorem}
\newtheorem{lemma}[theorem]{Lemma}
\newtheorem{algorithm}{Algorithm}
\def \Bnl {\calB_n^{\texttt{NL}}}
\def \Banb {\calB_n^{\texttt{ANB}}}
\def \BanbOne {\calB_{n,1}^{\texttt{ANB}}}
\def \BanbTwo {\calB_{n,2}^{\texttt{ANB}}}
\def \whatBnl {\widehat{\calB}_n^{\,\texttt{NL}}}
\def \whatBanb {\widehat{\calB}_n^{\,\texttt{ANB}}}
\def \cond {\,\big|\,}
\colorlet{Mycolor1}{RedOrange}
\colorlet{Mycolor2}{ForestGreen}
\title{Bias Correction and Robust Inference in Semiparametric Models}
\author{Jungjun Choi and Xiye Yang \\[0.5em] Department of Economics\\ Rutgers University}
\date{First draft: July 2019 \\ This version: \mydate{\today}}
\begin{document}

\maketitle
\setlength{\baselineskip}{1.1\baselineskip}
\hyphenpenalty=5000 \tolerance=1000

\begin{abstract}
This paper analyzes several different biases that emerge from the (possibly) low-precision nonparametric ingredient in a semiparametric model. We show that both the variance part and the bias part of the nonparametric ingredient can lead to some biases in the semiparametric estimator, under conditions weaker than typically required in the literature. We then propose two bias-robust inference procedures, based on multi-scale jackknife and analytical bias correction, respectively. We also extend our framework to the case where the semiparametric estimator is constructed by some discontinuous functionals of the nonparametric ingredient. The simulation study shows that both bias-correction methods have good finite-sample performance.
\medskip

\textit{Keywords}: Semiparametric two-step estimation, nonparametric estimator, bias, robust inference, multi-scale jackknife, analytical bias correction.
\end{abstract}

\bibliographystyle{ecta}

\section{Introduction} \label{sec:into}

Recently, increasing attention has been drawn to the interplay between the asymptotic properties of semiparametric estimators and their nonparametric ingredients that could have relatively low precision (e.g., the nonparametric ingredient can have a slower-than-$n^{1/4}$ convergence rate), which may render the previously established asymptotic results invalid. Significant progress has been made by one branch of literature \citep{Cattaneo&Crump&Jansson:2010, Catteneo&Crump&Jansson:2013, Cattaneo&Crump&Jansson:2014, CCT:2014, Cattaneo&Jansson:2018} about \textquotedblleft small bandwidth asymptotics\textquotedblright{} for kernel-based semiparametric estimators and establishes bootstrap inference procedure robust to a bias that has non-negligible impacts when the bandwidth is \textquotedblleft small.\textquotedblright{} Another branch of literature \citep{Ichimura&Newey:2017, CCDDHN:2017double, CCDDHN:2018double, CNR:2018, CEINR:2018} has creatively introduced an influence function to the GMM semiparametric two-step estimator, to ensure local robustness to the first-step nonparametric ingredient, a property which, as pointed out by \citep{Cattaneo&Jansson:2018}, can be interpreted as \textquotedblleft large bandwidth asymptotics\textquotedblright{} in the case of kernel-based semiparametric estimators. 

Motivated by these new results, this paper proposes a general framework to analyze the impacts of several different biases that emerge from the low-precision nonparametric ingredient, including kernel and sieve estimators, on the distributional approximations of the associated semiparametric estimator. We generalize the framework used by \citep{Andrews:1994}, \citep{Newey:1994}, and \citep{Newey&McFadden:1994}, by allowing the nonparametric ingredient to have a convergence rate slower than what is required by the original papers (i.e., a faster-than-$n^{1/4}$ convergence rate). In short, we consider the case where the key Condition (2.8) in \citep{Andrews:1994} fails to hold. More specifically, we first replace the linear approximation (Assumption 5.1 in \citep{Newey:1994} and Condition (i) of Theorem 8.1 in \citep{Newey&McFadden:1994}) in the last two cited papers by a quadratic one. Although this requires a higher-order differentiability condition, it enables us to account for a nonlinear bias, which may appear when the nonparametric ingredient converges slower than $n^{1/4}$. Second, we also relax a restriction jointly implied by the stochastic equicontinuity condition and the mean-square continuity condition (Assumptions 5.2 and 5.3 in \citep{Newey:1994}, and Conditions (ii) and (iii) of Theorem 8.1 in \citep{Newey&McFadden:1994}), to account for another \textquotedblleft linear\textquotedblright{} bias (see Remarks \ref{rem:SE} and \ref{rem:MSC} below). Both biases can have non-negligible (in the sense of not being $\oP(n^{-1/2})$) impacts on the distributional approximation of the semiparametric estimator. 

As for the sources of the above biases, recall the well-known bias-variance tradeoff in the nonparametric literature. Our analysis shows that the nonlinear bias is related to the variance part of the nonparametric ingredient, while the other bias comes from the nonparametric bias. Theoretically speaking, it is possible to impose certain restriction(s) on the tuning parameter of the nonparametric ingredient so that one bias becomes $\oP(n^{-1/2})$ (e.g., under- or over-smoothing in the kernel case), just like the above-cited recent literature. However, it is often hard to verify such restriction(s) in practice. Besides, even though one bias could be $\oP(n^{-1/2})$ in an asymptotic sense, its effects may not be sufficiently small to be negligible in finite or small samples. Therefore, we do not impose such restriction(s) and allow the possibility that either one or both of them could be larger than $\oP(n^{-1/2})$. By doing so, our distributional approximation will be robust to a larger range of values of the tuning parameter. When specialized to the kernel-based case, this is equivalent to establishing asymptotic results without distinguishing small and large bandwidths. Consequently, the finite sample performance of the corresponding inference procedures will be less sensitive to the choice of the tuning parameter. 

In addition to the above two biases that appear in general cases, our analysis also indicates that there can be another special bias for the kernel-based semiparametric estimators. We refer to it as the \textquotedblleft singularity bias,\textquotedblright{} which, in our view, is the same as the \textquotedblleft leave-in bias\textquotedblright{} studied by \citep{Cattaneo&Jansson:2018}. In the cited paper, the \textquotedblleft leave-in bias\textquotedblright{} highlights the fundamental difference between the asymptotic separability condition and the stochastic equicontinuity condition therein (see Remark \ref{rem:SE} for more discussions). Since the framework we adopted is somewhat different, we discuss the \textquotedblleft singularity bias\textquotedblright{} mainly from the perspectives of U-statistics and V-statistics. If we use the same empirical measure to construct the nonparametric and the semiparametric estimators, then the first-order term in our quadratic approximation is a V-statistic. In contrast, if we either use the \textquotedblleft leave-one-out\textquotedblright{} version of the empirical measure to construct the nonparametric estimator, or use a smoothed measure to construct the semiparametric estimator, then the first-order term becomes a U-statistic. Typically, the difference between a V-statistic and its corresponding U-statistic is very small, often of order $\OP(n^{-1})$. However, the special structure (we believe it is the convolution structure that matters here) of the kernel-based nonparametric estimator can lead to a potentially much larger difference, yielding this special bias. As a comparison, there is no such bias in the sieve-based case. 

The second main result of this paper is that we propose two different inference procedures that are robust to the aforementioned biases. 
The first one is the multi-scale jackknife (MSJ) method, which utilizes the tuning parameter of the nonparametric ingredient in the role of sample size as in the original jackknife method introduced by \citep{Quenouille:1949AMS}. Similar ideas have been adopted by, for example, \citep{Schucany&Sommers:1977}, \citep{Bierens:1987kernel}, \citep{PSS:1989}, and \citep{Li&Liu&Xiu:2019}. Theoretically speaking, this method can remove \textit{all} aforementioned biases, provided that an appropriate weighting scheme is chosen. In the kernel-based case, this method can automatically remove the \textquotedblleft singularity bias,\textquotedblright{} for that it has the same order as the nonlinear bias. If one knows the orders of other smaller biases, one can use more scales to remove these biases as well (refer to the simulation results). The second one is the analytical-based bias correction (ABC) method. It requires a twice Fr\'{e}chet differentiable assumption (so that one can get the analytical form of the nonlinear bias) and some consistent estimators of both the variance part and the bias part of the nonparametric ingredient. Provided that some other regularity conditions are satisfied, this method can remove or reduce those biases (the remaining bias, if any, will be negligible at a root-$n$ rate).

Last but not least, we show that our framework can be extended to the family of semiparametric estimators that are constructed from discontinuous functionals of the nonparametric ingredients. The requirement is that those discontinuous functionals must have smooth projections, which can be well approximated by quadratic functionals of the nonparametric ingredients. Under certain regularity conditions, the multi-scale jackknife method can yield valid and robust inference. However, the analytical bias correction in this case is more involved, for that one needs to take into account the estimation error and/or bias associated with the unknown smooth projection. Hence, we leave this to future exploration.

The rest of this paper is organized as follows. Section \ref{sec:semiparametric} discusses several key properties of a general class of semiparametric estimators and present our first main result, i.e., a distributional approximation that accounts for various biases. In Section \ref{sec:inference}, we present two inference procedures that are robust to those biases and provide some sufficient conditions to extend the results from the class of twice differentiable functionals to certain discontinuous functionals. Section \ref{sec:simulation} demonstrates the finite sample performance of the two inference procedures through some simulation results. Section \ref{sec:conclusion} concludes. 

\section{Asymptotically Linear Semiparametric Estimators} \label{sec:semiparametric}

Throughout this paper, any random sequence that is $\oP(n^{-1/2})$ will be referred to as \textquotedblleft root-$n$ negligible.\textquotedblright{} We will use $C$ to denote some finite positive number, the value of which may change from line to line. Denote by $\| \|$ the Euclidean norm. 

\subsection{Asymptotic linearity} \label{subsec:linearity}

Let $\theta_0 \in \Theta$ be a finite-dimensional parameter of interest, where $\Theta$ is a subset of some Euclidean space. Suppose that the identification of $\theta_0$ depends on an unknown function $\gamma_0\in \Gamma$, where $\Gamma$ represents certain infinite-dimensional functional space. Let $z_1,\cdots,z_n$ be an \text{i.i.d.} copies of a random vector $z\in\bbR^{d_z}$. We shall use $x$ to denote a real vector in $\bbR^{d_z}$. Suppose that we can sequentially construct two consistent estimators $\hatgamma_n$ and $\hattheta_n$ from this sample. 

Let $\bbP$ and $\bbP_n$ be the true probability measure and the empirical probability measure, respectively. For any signed measure $\bbQ$, let $\bbQ f \coloneqq \int f d\bbQ$ for any function $f$. Then for any functional $g$ of $(z,\theta,\gamma)$, define
\begin{align*}
	G(\theta, \gamma) \coloneqq \bbP g = \bbE[g(z,\theta,\gamma)] \quad\text{and}\quad \whatG_n(\theta,\gamma) \coloneqq \bbP_n g = \frac{1}{n} \sum_{i=1}^n g(z_i, \theta, \gamma).
\end{align*}
Here the notation $g(z_i,\theta,\gamma)$ is to stress that the moment function is evaluated at the sample point $z_i$ under the empirical measure. The functional $g$ can directly and/or indirectly (i.e., through $\gamma$) depend on $z_i$.

\begin{assumption}[AL---Asymptotic Linearity in $g$] \label{asmp:AL} Assume that the estimator $\hattheta_n$ is \textit{asymptotically linear}. That is, 
\begin{align} \label{eq:AL}
	\hattheta_n - \theta_0 = \calJ_n \whatG_n(\theta_0, \hatgamma_n) + \oP(n^{-1/2}) = \frac{1}{n} \sum_{i=1}^n \calJ_n g(z_i, \theta_0, \hatgamma_n) +  \oP(n^{-1/2}),
\end{align}
where $\calJ_n \conP \calJ_0$ for some non-random, finite and non-degenerate $\calJ_0$ (when it is a matrix, all of its eigenvalues are finite and bounded below from zero), and the functional $g$ satisfies that $G(\theta_0,\gamma_0) = \bbE[g(z,\theta_0,\gamma_0)]=0$, which uniquely determines $\theta_0$.  
\end{assumption}

\begin{remark} \label{rem:Pac}
Another way to formulate $\whatG_n$ is to use an estimated probability measure, which is absolutely continuous with respect to the Lebesgue measure. Denote such a measure by $\bbP_n^{\texttt{AC}}$. For instance, it can be obtained by using a kernel-based method. Now consider the case of estimating the average density $\theta_0 = \bbE[ \gamma_0(z) ]$, which implies that $g(z,\theta,\gamma) = \gamma(z) - \theta$. We can then have two different formulations for $\hattheta_n - \theta_0$: one for the average density estimator $\hattheta_n^{\texttt{AD}}$:
\begin{align*}
	\hattheta_n^{\texttt{AD}} - \theta_0 = \whatG_n(\theta_0,\hatgamma_n) = \bbP_n g = \frac{1}{n} \sum_{i=1}^n \big( \hatgamma_n(z_i) - \theta_0 \big),
\end{align*}
and the other one for the integrated squared density estimator $\hattheta_n^{\texttt{ISD}}$ (recall that $x$ is a real vector):
\begin{align*}
	\hattheta_n^{\texttt{ISD}} - \theta_0 = \whatG_n(\theta_0,\hatgamma_n) = \bbP_n^{\texttt{AC}} g = \int \hatgamma_n^2(x) dx - \theta_0.
\end{align*}
In both cases, $\calJ_n = \calJ_0 = I$. 
\end{remark}

\begin{remark}
The requirement on $\calJ_0$ excludes the possibility of weak identification of $\theta$. This may seem to be restrictive. However, we are going to extend the classic theory in a different direction. 

As pointed out by \citep{IAndrews:2016Functional}, the empirical process theory typically implies that the root-$n$ re-scaled sample moment function converges in distribution to the sum of three parts (refer to Equation (1) therein): a mean function, which may allow for various types of identification; a mean-zero Gaussian process, which establishes the central limit theorem; and a residual term, which is typically assumed to be negligible at the root-$n$ rate. While we assume the mean function gives strong identification of $\theta$, we are going to relax the assumption on the residual term and allow it to be non-negligible at the root-$n$ rate.
\end{remark}

We note that $\calJ_n g(z_i,\theta_0,\hatgamma_n)$ gives the influence of a single observation in the leading term of the estimation error $\hattheta_n - \theta_0$. In this sense, it can be viewed as the influence function, following \citep{Hampel:1974}. \citep{Ichimura&Newey:2017} adopt a very similar definition of asymptotic linearity in their equation (2.1). The only difference is that we introduce the term $\calJ_n$, in order to focus on the more essential part $g$ of the influence function. As pointed out by \citep{Ichimura&Newey:2017}, under sufficient regularity conditions, almost all root-$n$ consistent semiparametric estimators satisfy Assumption \ref{asmp:AL}.

\begin{named-exmp}[GMM Semiparametric Estimator]
Consider a GMM-type estimator $\hattheta_n$:
\begin{align*}
	\hattheta_n \coloneqq \argmax_{\theta \in \Theta} - \frac{1}{2} \whatG_n(\theta, \hatgamma_n)^\intercal W_n \whatG_n(\theta, \hatgamma_n),
\end{align*}
where $W_n \conP W_0$, representing the weighting matrix and its limit. Suppose that $g$ is first-order differentiable at $\theta_0$, then one can readily get
\begin{gather*}
	\calJ_n = [ \partial_\theta \whatG_n(\theta_0,\hatgamma_n)^\intercal W_n \partial_\theta \whatG_n(\theta_0,\hatgamma_n)]^{-1} \partial_\theta \whatG_n(\theta_0,\hatgamma_n)^\intercal W_n, \\
	\calJ_0 = [ \partial_\theta G(\theta_0,\gamma_0)^\intercal W_0 \partial_\theta G(\theta_0,\gamma_0)]^{-1} \partial_\theta G(\theta_0,\gamma_0)^\intercal W_0.
\end{gather*}
We have $\calJ_n \conP \calJ_0$, if $\partial_\theta g(\theta_0, \gamma)$ is continuous with respect to $\gamma$ in a neighborhood of $\gamma_0$. 
\end{named-exmp}

The above example shows a subtle difference in the definition of asymptotic linearity between this paper and those in \citep{Ichimura&Newey:2017} and \citep{Cattaneo&Jansson:2018}. In this paper, the term $\calJ_n$ can be random, hence can be different from $\calJ_0$ in a non-trivial way. However, in the GMM examples of the two cited papers, the authors set $\calJ_n \equiv \calJ_0$ (cf. (2.2) in \citep{Ichimura&Newey:2017} and the discussion following Condition AL in \citep{Cattaneo&Jansson:2018}). It is easy to see that if the following condition holds
\begin{align} \label{eq:calJ}
	(\calJ_n - \calJ_0) \whatG_n(\theta_0,\hatgamma_n) = \oP(n^{-1/2}),
\end{align}
then the above definition can be modified to be exactly the same as the two cited papers. A sufficient condition for \eqref{eq:calJ} is $\whatG_n(\theta_0,\hatgamma_n) = \OP(n^{-1/2})$, which indeed holds in a lots of applications. This sufficient condition may not hold in the current paper, since we are going to consider the general case where $\whatG_n(\theta_0,\hatgamma_n)$ could have some bias(es) that can be larger than $\OP(n^{-1/2})$ in order. However, eventually, we will make sure that Condition \eqref{eq:calJ} is satisfied (see Lemma \ref{lem:op1} for details).

\subsection{Quadratic approximation of $\whatG_n(\theta_0,\hatgamma_n)$} \label{subsec:quadraticity}

To begin with, we have the following decomposition (recall that $G(\theta_0,\gamma_0)=0$)
\begin{align*}
	\whatG_n(\theta_0,\hatgamma_n) = \whatG_n(\theta_0,\hatgamma_n) - \whatG_n(\theta_0,\gamma_0) + \whatG_n(\theta_0,\gamma_0) - G(\theta_0,\gamma_0).
\end{align*}
The first difference is the impact of replacing $\gamma_0$ by its estimator in the empirical moment condition, while the second one is the difference between a sample average and its expectation, to which we can apply the central limit theorem (CLT) for \text{i.i.d.} random variables. 

We introduce the following assumption on $g$, in order to get a more detailed evaluation of the first term. 

\begin{assumption}[Quadraticity] \label{asmp:quadraticity} 
Suppose that the following (stochastic) quadratic approximation of the functional $g$ holds around $(\theta_0,\gamma_0)$ for sufficiently large $n$:
\begin{align*}
	g(z_i,\theta_0,\hatgamma_n) =\,& g(z_i,\theta_0,\gamma_0) + g_{\gamma}^{\,\prime}(z_i,\theta_0,\gamma_0,\hatgamma_n-\gamma_0) + \frac{1}{2} g_{\gamma\gamma}^{\,\prime\prime}(z_i,\theta_0,\gamma_0,\hatgamma_n-\gamma_0, \hatgamma_n-\gamma_0) \\
		& + g_R(z_i, \theta_0, \gamma_0, \hatgamma_n-\gamma_0),
\end{align*} 
where $g_{\gamma}^{\,\prime}(z_i,\theta_0,\gamma_0,\cdot)$ is a linear functional, $g_{\gamma\gamma}^{\,\prime\prime}(z_i,\theta_0,\gamma_0,\cdot,\cdot)$ is a bi-linear functional and symmetric in its two inputs (the subscript $\gamma$ indicates that these functionals are from the expansion with respect to $\gamma$, not $z$ or $\theta$), and the functional $g_R$ captures the remainder of this expansion. We assume that $\bbE[\| g_\gamma^\prime(z_i,\theta_0,\gamma_0, \gamma - \gamma_0) \|] \leq C \, \bbE[ \| \gamma(z_i) - \gamma(z_i) \| ]$, $\bbE[\| g_{\gamma\gamma}^{\prime\prime}(z_i,\theta_0,\gamma_0, \gamma - \gamma_0, \gamma - \gamma_0) \|] \leq C \, \bbE[ \| \gamma(z_i) - \gamma(z_i) \|^2 ]$, and $\bbE[ \| g_R(z_i, \theta_0, \gamma_0, \gamma -\gamma_0) \| ] \leq C \, \bbE[ \| \gamma(z_i) - \gamma_0(z_i) \|^3 ]$ for $\gamma$ sufficiently close to $\gamma_0$ and some finite number $C$.
\end{assumption}

Compared to Assumption 5.1 (Linearization) in \citep{Newey:1994} and Condition (i) of Theorem 8.1 in \citep{Newey&McFadden:1994}, the above assumption requires a second-order, instead of first-order, differentiability of $g$ with respect to $\gamma$, which could be a random function, such as $\hatgamma_n$. However, the two cited papers both require that $ \| \hatgamma_n(z_i) - \gamma_0(z_i) \|^2 = \oP(n^{-1/2}) $. In other words, the nonparametric estimator $\hatgamma_n$ must have a faster-than-$n^{1/4}$ convergence rate (i.e., $r>1/4$ and $s>1/4$ in Assumption \ref{asmp:BO} below). Yet, as to be shown later, we just need $ \| \hatgamma_n(z_i) - \gamma_0(z_i) \|^3 = \oP(n^{-1/2})$, which only requires a faster-than-$n^{1/6}$ convergence rate for $\hatgamma_n$. With this slower convergence rate, we may have some non-root-$n$-negligible biases.

Define the following terms using the empirical measure $\bbP_n$:
\begin{align*}
	\whatG_{n,\gamma}^{\,\prime}(\theta_0,\gamma_0,\eta) &\coloneqq \frac{1}{n} \sum_{i=1}^n g_{\gamma}^{\,\prime}(z_i,\theta_0,\gamma_0,\eta), \\
	\whatG_{n,\gamma\gamma}^{\,\prime\prime}(\theta_0,\gamma_0,\eta,\phi) &\coloneqq \frac{1}{n} \sum_{i=1}^n g_{\gamma\gamma}^{\,\prime\prime}(z_i,\theta_0,\gamma_0,\eta,\phi).
\end{align*}
The quadraticity assumption implies that, for sufficiently large $n$, we have
\begin{align*}
	\whatG_n(\theta_0,\hatgamma_n) =\,& \whatG_n(\theta_0,\gamma_0) + \whatG_{n,\gamma}^{\,\prime}(\theta_0,\gamma_0, \hatgamma_n - \gamma_0) + \frac{1}{2} \whatG_{n,\gamma\gamma}^{\,\prime\prime}(\theta_0,\gamma_0,\hatgamma_n - \gamma_0, \hatgamma_n - \gamma_0) \\
		& + \whatG_{n,R}(\theta_0,\gamma_0, \hatgamma_n - \gamma_0),
\end{align*}
where $\whatG_{n,R}(\theta_0,\gamma_0, \hatgamma_n - \gamma_0) = \frac{1}{n}\sum_{i=1}^n g_R(z_i,\theta_0,\gamma_0,\hatgamma_n-\gamma_0)$. 

\begin{remark} \label{rem:PAC}
In the case where we use the measure $\bbP_n^{\text{AC}}$, instead of $\bbP_n$, to construct $\whatG$, we apply Assumption \ref{asmp:quadraticity} to an equivalent functional $\tilg$, which will be evaluated at a real vector $x$, defined as follows. Let $\bbL$ be the Lebesgue measure, $\nu_0$ be the true density function of $z$, which may or may not be part of $\gamma_0$, and $\hatnu_n = d\bbP_n^{\text{AC}}/d\bbL$. Then we have $\bbP g =\bbE[g] = \bbL [g(\cdot,\theta_0, \gamma_0)\nu_0(\cdot)]$ and $\bbP_n^{\texttt{AC}} g = \bbL ( g(\cdot,\theta_0, \hatgamma_n) \hatnu_n(\cdot) )$. Hence, we set $\tilg(\theta, \gamma, \nu) \coloneqq \bbL [g(\cdot,\theta, \gamma) \nu(\cdot)]$. In the special case where $\nu_0$ is part of $\gamma_0$, we can write $\tilg(\theta, \gamma, \nu)$ as $\tilg(\theta,\gamma)$. In the end, we suppose that Assumption \ref{asmp:quadraticity} holds true for the functional $\tilg$ with respect to $(\gamma,\nu)$ around $(\gamma_0, \nu_0)$. 

\end{remark}

Throughout this paper, we assume that $\hatgamma_n$ is a consistent estimator of the unknown function $\gamma_0$. Yet, such a nonparametric estimator is often biased, leading to the well-known bias-variance tradeoff in the nonparametric literature. In the semiparametric literature, it is often assumed that the nonparametric bias is sufficiently small so that this bias is root-$n$ negligible, causing no problems for the associated semiparametric estimator (that is, $G_{\gamma}^{\,\prime}(\theta_0,\gamma_0, \hatgamma_n - \gamma_0) \coloneqq \bbE[ \whatG_{n,\gamma}^{\,\prime}(\theta_0, \gamma_0, \hatgamma_n - \gamma_0) ] = \oP(n^{-1/2})$). Since we aim at relaxing such an assumption, we are going to separate the bias part from the variance part. The idea is to introduce a function $\bargamma_n$ such that $G_{\gamma}^{\,\prime}(\theta_0,\gamma_0, \hatgamma_n - \bargamma_n) \coloneqq \bbE[ \whatG_{n,\gamma}^{\,\prime}(\theta_0, \gamma_0, \hatgamma_n - \bargamma_n) ]$ is identically zero or at least $\oP(n^{-1/2})$, no matter how one chooses the tuning parameter. Then we obtain a more detailed decomposition:
\begin{align}
\begin{split} \label{eq:decG}
	\whatG_n(\theta_0,\hatgamma_n) =\,&  \whatG_n(\theta_0,\gamma_0) + \whatG_{n,\gamma}^{\,\prime}(\theta_0,\gamma_0,\hatgamma_n - \bargamma_n) +  \whatG_{n,\gamma}^{\,\prime}(\theta_0,\gamma_0,\bargamma_n - \gamma_0) \\
		& + \frac{1}{2} \whatG_{n,\gamma\gamma}^{\,\prime\prime}(\theta_0,\gamma_0,\hatgamma_n - \bargamma_n, \hatgamma_n - \bargamma_n) + \whatG_{n,\gamma\gamma}^{\,\prime\prime}(\theta_0,\gamma_0,\hatgamma_n - \bargamma_n, \bargamma_n - \gamma_0) \\
		&+ \frac{1}{2} \whatG_{n,\gamma\gamma}^{\,\prime\prime}(\theta_0,\gamma_0,\bargamma_n - \gamma_0, \bargamma_n - \gamma_0) + \whatG_{n,R}(\theta_0,\gamma_0, \hatgamma_n - \gamma_0).
\end{split}
\end{align}
Here, we would expect to establish a central limit theorem for the sum of the first two terms. The third and fourth terms are the two main biases that we are going to analyze. Intuitively, we may defined $\bargamma_n$ as $\bargamma_n \coloneqq \bbE[ \hatgamma_n ]$. However, this may not necessarily lead to the desired result. Instead, we are going to use the definition $\bargamma_n(z_i) \coloneqq \bbE[ \hatgamma_n(z_i) | z_i ]$, especially when there is a \textquotedblleft singularity bias.\textquotedblright{}

\subsection{V-statistic and U-statistic} \label{subsec:VU}

To begin with, consider the case where we also use the empirical measure $\bbP_n$ to construct $\hatgamma_n$. Without much loss of generality, suppose that there exists some function $\psi$ such that $\hatgamma_n(\cdot) = \bbP_n \psi(\cdot) = \frac{1}{n} \sum_{j=1}^n \psi(\cdot, z_j) $ (\citep{Newey&McFadden:1994} adopt a similar representation in Section 8 therein). Moreover, it is reasonable to assume that  $g_{\gamma}^{\,\prime}(z_i,\theta_0,\gamma_0,\hatgamma_n)$ can be reduced to $g_{\gamma}^{\,\prime}\big(z_i,\theta_0,\gamma_0,\hatgamma_n(z_i) \big) $. Consequently, the linearity of  $g_{\gamma}^{\,\prime}(z,\theta_0,\gamma_0, \cdot)$ implies that
\begin{align*}
    & \whatG_{n,\gamma}^{\,\prime}(\theta_0,\gamma_0,\hatgamma_n ) = \frac{1}{n} \sum_{i=1}^n g_{\gamma}^{\,\prime}\big(z_i,\theta_0,\gamma_0,\hatgamma_n(z_i) \big) \\
    =\,& \frac{1}{n} \sum_{i=1}^n g_{\gamma}^{\,\prime}\big(z_i,\theta_0,\gamma_0, \frac{1}{n} \sum_{i=1}^n \psi(z_i, z_j) \big) 
    = \frac{1}{n^2} \sum_{i,j=1 }^n g_{\gamma}^{\,\prime}\big(z_i,\theta_0,\gamma_0, \psi(z_i, z_j) \big) \\
    =\,& \frac{1}{n^2} \sum_{i=1}^n g_{\gamma}^{\,\prime}\big(z_i,\theta_0,\gamma_0, \psi(z_i, z_i) \big) + \frac{1}{n^2} \sum_{i \neq j} g_{\gamma}^{\,\prime}\big(z_i,\theta_0,\gamma_0, \psi(z_i, z_j) \big),
\end{align*}
where the sum $\sum_{i\neq j}$ is taken over $1\leq i, j\leq n$ with $i\neq j$. 

It is then clear that $\whatG_{n,\gamma}^{\,\prime}(\theta_0,\gamma_0,\hatgamma_n )$ is a V-statistic in this case. Typically, the difference between a V-statistic and its corresponding U-statistic is rather small, often of order $\OP(1/n)$. However, as to be shown in the following example of the kernel density estimator, it sometimes can be larger than $\OP(1/n)$, or even $\OP(n^{-1/2})$. The following example highlights the potentially \textquotedblleft large\textquotedblright{} difference between V- and U-statistics, when the nonparametric ingredient has sufficiently low precision.

\begin{named-exmp}[Kernel Density Estimator]
Suppose that $\gamma_0$ is the density function of each $z_i$. Le $K$ be a kernel function with order $m$ and $K_h(\cdot) \coloneqq K(\cdot /h) /h^{d_z} $. The kernel density estimator $\hatgamma_n$ at a real vector $x\in\bbR^{d_z}$ and at a sampling point $z_i$ are given by
\begin{align*}
    \hatgamma_n(x) = \frac{1}{n} \sum_{i=1}^n K_h(x-z_i) \,\,\text{ and }\,\, \hatgamma_n(z_i) = \frac{K(0)}{nh^{d_z}} + \frac{1}{n} \sum_{\substack{j=1 \\ i\neq j}}^n K_h(z_i-z_j),
\end{align*}
respectively. In this case, we have $\psi(x,y) = K_h(x-y)$ (note that the kernel method is closely related to convolution). In the expression of $\hatgamma_n(z_i)$, the term $\psi(z_i,z_i) = K_h(z_i-z_i) = K(0)/(nh^{d_z})$ is non-random. This shows a difference between $\hatgamma_n(x)$ and $\hatgamma_n(z_i)$, which is quite important when $1/(nh^{d_z})$ is not $o(n^{-1/2})$. It is easy to see that $\whatG_{n,\gamma}^{\,\prime}(\theta_0,\gamma_0,\hatgamma_n ) $ becomes
\begin{align*}
      \frac{1}{nh^{d_z}}  \frac{1}{n} \sum_{i=1}^n g_{\gamma}^{\,\prime}\big(z_i,\theta_0,\gamma_0,K(0) \big) + \frac{1}{n^2} \sum_{i\neq j} g_{\gamma}^{\,\prime}\big(z_i,\theta_0,\gamma_0, K_h(z_i - z_j) \big).
\end{align*}
In general, the first term is of order $\OP(1/(nh^{d_z}))$, which may not be root-$n$ negligible. Since it is from $K_h(z_i-z_i)$, which behaves differently from $K_h(z_i-z_j)$ with $j\neq i$, we refer to it as the \textquotedblleft singularity bias\textquotedblright{} (or maybe \textquotedblleft non-smoothing bias\textquotedblright{}).

On the other hand, we have $\bargamma_n(x) = \bbE[\hatgamma_n(x)] = \int K(u) \gamma_0(x-hu) du$. The plug-in definition then leads to $\bargamma_n(z_i) = \int K(u) \gamma_0(z_i - hu)du$. According to the Law of Iterated Expectation, we readily get
\begin{align*}
    G_{\gamma}^{\,\prime}(\theta_0,\gamma_0, \hatgamma_n - \bargamma_n) = \frac{1}{nh^{d_z}} \bbE\big[ \whatG_{n,\gamma}^{\,\prime}\big(\theta_0,\gamma_0,K(0) \big) \big] + O(\frac{1}{n}) = O\big( \frac{1}{nh^{d_z}} \big).
\end{align*}
The sufficient and necessary condition for this term to be root-$n$ negligible is $n^{1/4} = o(\sqrt{nh^{d_z}})$, which is equivalent to a faster-than-$n^{1/4}$ convergence rate for the kernel density estimator $\hatgamma_n$. Since we aim at relaxing this requirement, the above plug-in definition of $\bargamma_n$ does not suit our purpose. 

To address this problem, we can modify the definition of $\bargamma_n$ at sample points $\{z_i\}_{i=1}^n$, which are more important when we use the empirical measure $\bbP_n$ to construct $\whatG_n$. More specifically, we define ($\bargamma_n(x)$ remains the same as above for any real vector $x$)
\begin{align*}
    \bargamma_n(z_i) \coloneqq \bbE[ \hatgamma_n(z_i) | z_i  ] = \frac{1}{n h^{d_z}} K(0) + \frac{n-1}{n} \int K(u) \gamma_0(z_i - hu) du,
\end{align*}
With this modified $\bargamma_n$, we move the \textquotedblleft singularity bias\textquotedblright{} to $\whatG_{n,\gamma}^{\,\prime}(\theta_0,\gamma_0, \bargamma_n - \gamma_0)$. One can check that $G_{\gamma}^{\,\prime}(\theta_0,\gamma_0, \hatgamma_n - \bargamma_n) = \bbE[ \whatG_{n,\gamma}^{\,\prime}(\theta_0,\gamma_0, \hatgamma_n - \bargamma_n) ]=0$.  
\end{named-exmp}

With the modified definition of $\bargamma_n$, we readily get
\begin{align*}
    \whatG_{n,\gamma}^{\,\prime}(\theta_0,\gamma_0,\hatgamma_n - \bargamma_n) &= \frac{1}{n(n-1)}\sum_{i\neq j} g_{\gamma}^{\,\prime}\big(z_i,\theta_0,\gamma_0, \phi(z_i, z_j) \big) \times \big( 1 - \frac{1}{n} \big),
\end{align*}
where $\phi(z_i,z_j) \coloneqq \psi(z_i,z_j) - \bbE[\psi(z_i,z_j) | z_i]$. Its difference with the associated U-statistic is at most $\OP(n^{-1})$, which is always root-$n$ negligible. However, in this case, we may still have the \textquotedblleft singularity bias\textquotedblright{} in $\whatG_{n,\gamma}^{\,\prime}(\theta_0,\gamma_0,\bargamma_n - \gamma_0)$, if $\hatgamma_n$ is a kernel-based estimator. 

\begin{named-exmp}[Sieve Estimator]
Let $z = (Y, X^\intercal)^\intercal$. Consider a conditional mean model for $Y$ and $X$: $\gamma_0(z,\theta) = \bbE[ \rho(Y,\theta) | X ]$. Following the notation used by \citep{Chen:2007}, we denote by $\{p_{0j}(X), j=1,2,\cdots, k_{m,n}\}$ a sequence of known basis functions in the space of square integrable functions. Let $p^{k_{m,n}}(X) = ( p_{01}(X), \cdots, p_{0k_{m,n}}(X ) )^\intercal$ and $P = ( p^{k_{m,n}}(X_1), \cdots, p^{k_{m,n}}(X_n) )^\intercal$. Then the sieve estimator of $\gamma_0$ is given by
\begin{align*}
    \hatgamma_n(z_i, \theta) = \frac{1}{n} \sum_{j=1}^n \rho(Y_j, \theta) p^{k_{m,n}}(X_j)^\intercal \big(  P^\intercal P )^+ p^{k_{m,n}}(X_i) = \frac{1}{n} \sum_{j=1}^n \psi(z_i, z_j),
\end{align*}
where $(P^\intercal P)^+$ is the Moore-Penrose inverse of $P^\intercal P$. In this case, $\psi(z_i, z_i)$ does not lead to a \textquotedblleft singularity bias.\textquotedblright{} 
\end{named-exmp}

The above two examples show that only the kernel-based estimator may suffer from the \textquotedblleft singularity bias\textquotedblright{} problem. In certain cases, such as the average density estimator to be discussed in the next subsection, it might be desirable to remove this bias in advance. As implied by the example of the sieve estimator, one way to get rid of this bias is to use a global nonparametric estimator. Besides, there are two alternative solutions. However, we stress that it is not always necessary to remove the \textquotedblleft singularity bias\textquotedblright{} in advance (see the discussions in Section \ref{subsec:jackknife}).

One (possible) solution is to use the measure $\bbP_n^{\texttt{AC}}$, instead of $\bbP_n$, to construct $\whatG_n$. For simplicity, recall the integrated density estimator $\hattheta_n^{\texttt{ISD}}$. In this case, the linear functional
\begin{align*}
    \whatG_{n,\gamma}^{\,\prime}(\theta_0,\gamma_0,\hatgamma_n ) = 2 \int \gamma_0(x) \hatgamma_n(x) dx = \frac{2}{n} \sum_{i=1}^n \int \gamma_0(x) \psi(x,z_i) dx
\end{align*}
is a U-statistic of degree 1. In general, even when $\nu_0$ is not part of $\gamma_0$ (recall Remark \ref{rem:PAC}), the above functional is still a U-statistic, hence is not subject to the \textquotedblleft singularity bias.\textquotedblright{} Hence, we don't have to make any adjustment to $\bargamma_n$, as we do not evaluate $\hatgamma_n$ at the sample points. However, as to be shown in the next subsection, this solution increases the level of nonlinearity, hence may bring additional nonlinear bias.

Another solution is to replace the above V-statistic by its corresponding U-statistic. In other words, we can use the \textquotedblleft leave-one-out\textquotedblright{} empirical measure $\bbP_n^{\texttt{LOO}}$ to construct the nonparametric estimator $\hatgamma_n$. That is, let $\hatgamma_n(z_i) \coloneqq \bbP_n^{\texttt{LOO}} \psi(z_i, \cdot) = \frac{1}{n-1} \sum_{j=1, j\neq i}^n \psi(z_i, z_j)$. It is then obvious that
\begin{align*}
    \whatG_{n,\gamma}^{\,\prime}(\theta_0,\gamma_0,\hatgamma_n ) = \frac{1}{n(n-1)} \sum_{i\neq j} g_{\gamma}^{\,\prime}\big(z_i,\theta_0,\gamma_0, \psi(z_i, z_j) \big)
\end{align*}
is a U-statistic of degree 2, following the terminology of \citep{Hoeffding:1948}. It then follows that $\hatgamma_n(z_i) - \bargamma_n(z_j) = \frac{1}{n-1} \sum_{j=1,j\neq i}^n \phi(z_i, z_j)$ and
\begin{align*}
    & \whatG_{n,\gamma}^{\,\prime}(\theta_0,\gamma_0,\hatgamma_n - \bargamma_n) = \frac{1}{n(n-1)} \sum_{i\neq j} g_{\gamma}^{\,\prime}\big(z_i,\theta_0,\gamma_0, \phi(z_i, z_j) \big).
\end{align*}
That is, the term $\whatG_{n,\gamma}^{\,\prime}(\theta_0,\gamma_0,\hatgamma_n - \bargamma_n)$ is also a U-statistic of degree 2. In addition, there is no \textquotedblleft singularity bias\textquotedblright{} in $\whatG_{n,\gamma}^{\,\prime}(\theta_0,\gamma_0,\bargamma_n - \gamma_0)$. Moreover, this will not bring any additional nonlinear biases. Hence, we recommend this method whenever it is feasible. 

\begin{remark}[Stochastic Equicontinuity Condition] \label{rem:SE}
\citep{Cattaneo&Jansson:2018} have insightfully observed that, in the kernel-based case, the \textquotedblleft singularity bias\textquotedblright{} is a key in understanding the difference between the stochastic equicontinuity (SE) condition and the asymptotic separability (AS) condition. We note that the AS condition in the cited paper may involve quadratic terms. Below, we offer a different perspective that is only based on the first-order term in the approximation of $g$.

The stochastic equicontinuity condition given in Assumption 5.2 in \citep{Newey:1994} or Condition (ii) in \citep{Newey&McFadden:1994} (the formulation given by \citep{Andrews:1994} is a bit different. So we defer the discussion to Remark \ref{rem:SEAndrews}) can be written as
\begin{align} \label{eq:SE}
    \frac{1}{n} \sum_{i=1}^n \Big( g_{\gamma}^{\,\prime}(z_i,\theta_0,\gamma_0,\hatgamma_n-\gamma_0) - \int g_{\gamma}^{\,\prime}(z,\theta_0,\gamma_0,\hatgamma_n-\gamma_0) dF_0 \Big) = \oP(n^{-1/2}),
\end{align}
where $F_0$ is the true distribution function of $z$. The integral does not involve the \textquotedblleft singularity bias\textquotedblright{} because one evaluates the functional $g_{n,\gamma}'$ at a real vector $x$, not a sample point $z_i$, when calculating the integral. Therefore, when $\hatgamma_n$ is the original kernel density estimator, the \textquotedblleft singularity bias\textquotedblright{} only appears  in the first term. The sample average of the \textquotedblleft singularity bias\textquotedblright{} is  of order $\OP(\frac{1}{nh^{d_z}})$ (if $g$ only depends on $z_i$ through $\gamma$, this becomes $O(\frac{1}{nh^{d_z}})$, which is not $\oP(n^{-1/2})$ when $\hatgamma_n$ does not have a faster-than-$n^{1/4}$ converges rate.

If one uses the \textquotedblleft leave-one-out\textquotedblright{} kernel estimator or a sieve estimator, then there is no \textquotedblleft singularity bias\textquotedblright{} (this might also be achieved by replacing the input $z$ in the integrand by $z_i$). Hence, it might be possible that the above SE condition also holds true with a low precision $\hatgamma_n$. However, as to be shown in Remark \ref{rem:MSC}, the mean-square continuity condition will fail in such case, when the convergence rate of $\hatgamma_n$ is relatively slow.
\end{remark}

As a summary of the above discussion, no matter how we construct $\whatG_n$ and $\hatgamma_n$, we can always find $\bargamma_n$ such that $\whatG_{n,\gamma}^{\,\prime}(\theta_0, \gamma_n, \hatgamma_n - \bargamma_n)$ is a U-statistic, or its difference with a U-statistic is always root-$n$ negligible. Given such a suitable $\bargamma_n$, we are ready to introduce the following assumption on the asymptotic behavior of the sum of the first two terms in \eqref{eq:decG}. 

\begin{assumption}[AN---Asymptotic Normality] \label{asmp:AN}
For some non-random and positive definite $\Sigma_g$, we have
\begin{gather*}
    \sqrt{n} \, \big( \whatG_n(\theta_0,\gamma_0) + \whatG_{n,\gamma}^{\,\prime}(\theta_0,\gamma_0,\hatgamma_n - \bargamma_n) \big) \conL \calN(0, \Sigma_g).
\end{gather*}
\end{assumption}

\begin{remark}
The first two terms in \eqref{eq:decG} have been intensively studied in the literature, mostly under the assumption that all biases are root-$n$ negligible. Recall that
\begin{align*}
    & \whatG_n(\theta_0,\gamma_0) + \whatG_{n,\gamma}^{\,\prime}(\theta_0,\gamma_0,\hatgamma_n - \bargamma_n) = \frac{1}{n} \sum_{i=1}^n \big( g(z_i,\theta_0,\gamma_0) + g_{\gamma}^{\,\prime}(z_i,\theta_0,\gamma_0,\hatgamma_n - \bargamma_n) \big).
\end{align*}
The functionals $g(z,\theta_0,\gamma_0)$ and $g_{\gamma}^{\,\prime}(z,\theta_0,\gamma_0,\hatgamma_n-\bargamma_n)$ are respectively very similar to, for instance, $m(z,h_0)$ and $D(z,h-h_0)$ studied by \citep{Newey:1994}, or $g(z,\gamma_0)$ and $G(z,\gamma-\gamma_0)$ analyzed by \citep{Newey&McFadden:1994}. Note that when all biases are root-$n$ negligible, the terms $h-h_0$ and $\gamma-\gamma_0$ in the cited papers behave essentially the same as $\hatgamma_n - \bargamma_n$ in the current paper. 
\end{remark}

The previous discussion suggests that both $\whatG_n(\theta_0,\gamma_0)$ and $\whatG_{n,\gamma}^{\,\prime}(\theta_0,\gamma_0, \hatgamma_n-\bargamma_n)$ can be essentially viewed as U-statistics. Hence, although Assumption \ref{asmp:AN} is a high-level assumption, it is a direct result from the well-established theory on U-statistic (see, e.g., \citep{Hoeffding:1948}, \citep{Ustat:1994}, and \citep{Ustat:1996}) in most if not all cases. Therefore, we would expect it to be true under quite general conditions. In particular, it may also hold true for weakly dependent observations. Refer to \citep{Ustat:2006} and the references therein for more details.

\begin{remark} \label{rem:AN}
When $\hatgamma_n(\cdot) = \bbP_n \psi(\cdot) = \frac{1}{n} \sum_{j=1}^n \psi(\cdot, z_j) $, let $\psi_g(z_i,z_j) \coloneq g_\gamma^\prime (z_i, \theta_0, \gamma_0, \psi(z_i,z_j) )$ and $\phi_g(z_i, z_j) \coloneq \psi_g(z_i,z_j) - \bbE[\psi_g(z_i, z_j) | z_i]$.

According to the previous discussions, the term $\whatG_{n,\gamma}^{\,\prime}(\theta_0,\gamma_0,\hatgamma_n - \bargamma_n)$ is (approximately) a U-statistic:
\begin{align*}
	 & U_n = \frac{1}{n(n-1)} \sum_{\substack{i,j=1 \\ j\neq i}}^n g_\gamma^\prime (z_i, \theta_0, \gamma_0, \phi(z_i,z_j) ) = \frac{2}{n(n-1)} \sum_{i=1}^n \sum_{j>i} \frac{1}{2} [ \phi_g(z_i,z_j) + \phi_g(z_j,z_i) ].
\end{align*}
Its projection $\widehat{U}_n$ is given by
\begin{align*}
	\widehat{U}_n = \frac{1}{n} \sum_{i=1}^n \Big( \bbE[ \psi_g(z_j,z_i) | z_i ] - \bbE[ \psi_g(z _j, z_i) ] \Big), \text{ where } j \neq i.
\end{align*}
The U-statistic projection theory implies that $\sqrt{n}(U_n - \widehat{U}_n) \conP 0$. On the other hand, the statistic $\widehat{U}_n$ is a sum of \text{i.i.d.} random variables with zero mean. Hence, the asymptotic normality of $\whatG_{n,\gamma}^{\,\prime}(\theta_0,\gamma_0,\hatgamma_n - \bargamma_n)$ can be established. If we also know its correlation with $\whatG_n(\theta_0,\gamma_0)$, then Assumption \ref{asmp:AN} readily follows.

Consider the average density example, in which $g(z,\theta,\gamma)=\gamma(z) - \theta$. It can be shown that
\begin{align*}
	\sqrt{n} \, \whatG_n(\theta_0,\gamma_0) &= \frac{1}{\sqrt{n}} \sum_{i=1}^n [ \gamma_0(z_i) - \theta_0 ], \\
	\sqrt{n} \, \whatG_{n,\gamma}^{\,\prime}(\theta_0,\gamma_0,\hatgamma_n - \bargamma_n) &= \frac{1}{\sqrt{n}} \sum_{i=1}^n [ \gamma_0(z_i) - \theta_0 ] + \oP(1).
\end{align*}
Hence, Assumption \ref{asmp:AN} holds with $\Sigma_g = 4 \Var[\gamma_0(z)]$. As a comparison, if $\gamma_0$ were known, then we would be able to estimate $\theta_0$ by $\whatG_n(\theta_0,\gamma_0)$, the asymptotic variance of which is $\Var[\gamma_0(z)]$. This shows the efficiency loss due to not knowing $\gamma_0$.
\end{remark}

It is worth mentioning that the main advantage of this U-statistic perspective is that the asymptotic normality result with a root-$n$ rate can be established (provided that the U-statistic is not degenerate), regardless of the convergence rate of $\hatgamma_n - \bargamma_n$, which has no (asymptotic) biases by construction. Hence, if we can correct for those biases, then we can have asymptotic normality result for $\hattheta_n$ even in the case of having a low precision nonparametric ingredient.

\subsection{Possibly non-root-$n$-negligible biases} \label{subsec:biases}

Most previous asymptotic results for semiparametric two-step estimators, e.g., \citep{Andrews:1994}, \citep{Newey:1994}, \citep{Newey&McFadden:1994}, \citep{Chen:2007}, and \citep{Ichimura&Todd:2007}, impose certain conditions so that all the biases are root-$n$ negligible. Recent literature (recall the cited papers in the beginning of introduction) has started to relax such an assumption, so that some biases may have non-trivial impacts on the asymptotic distribution of $\hattheta_n$. 

Intuitively, one would expect the following two terms dominate the last three terms in the decomposition \eqref{eq:decG}:
\begin{align*}
    \Banb \coloneqq \whatG_{n,\gamma}^{\,\prime}(\theta_0,\gamma_0,\bargamma_n - \gamma_0) \quad \text{and} \quad  \Bnl \coloneqq \frac{1}{2} \whatG_{n,\gamma\gamma}^{\,\prime\prime}(\theta_0,\gamma_0,\hatgamma_n - \bargamma_n, \hatgamma_n - \bargamma_n).
\end{align*}
The term $\Banb$ represents the sample average of the nonparametric bias(es), while $\Bnl$ is a nonlinear bias.

\begin{remark}[Mean-square Continuity Condition] \label{rem:MSC}
Together with the stochastic equicontinuity condition (refer to Remark \ref{rem:SE} for the equivalent formulation in the current context), Assumption 5.3 in \citep{Newey:1994} and Condition (iii) of Theorem 8.1 in \citep{Newey&McFadden:1994} imply that there exists $\alpha(z)$ (or $\delta(z)$ in the latter paper) such that $\whatG_{n,\gamma}^{\,\prime}(\theta_0,\gamma_0, \hatgamma_n - \gamma_0) = \frac{1}{n} \sum_{i=1}^n \alpha(z_i) + \oP(n^{-1/2})$ (we modified the original expression to adapt to the current context) and $\bbE[\alpha(z)] = 0$. 

It is easy to see that $\alpha(z) \equiv g_{\gamma}^{\,\prime}(z,\theta_0,\gamma_0,\hatgamma_n - \bargamma_n)$ satisfies the second requirement (this can also be verified from a comparison of the asymptotic variances in the cited papers and in Assumption \ref{asmp:AN}). Then the first condition essentially requires $\Banb = \whatG_{n,\gamma}^{\,\prime}(\theta_0,\gamma_0, \bargamma_n - \gamma_0) = \oP(n^{-1/2})$. However, we are going to relax this restriction and allow $\Banb$, which may or may not include the \textquotedblleft singularity bias,\textquotedblright{} to be non-root-$n$-negligible. Following the discussion in Remark \ref{rem:SE}, even though it might be possible to reformulate the original stochastic equicontinuity condition in the two above-cited papers to make it hold true, the mean-square continuity condition will not hold in the current setting.
\end{remark}

\begin{remark}[Condition (2.8) in \citep{Andrews:1994}] \label{rem:SEAndrews}
A main result that \citep{Andrews:1994} intended to derive from the SE condition is (2.8) therein. Using the notation of the current paper, it can be written as:
\begin{align*}
	\whatG_{n}^{}(\theta_0,\hatgamma_n) -  \whatG_{n}^{}(\theta_0,\gamma_0) = \oP(n^{-1/2}).
\end{align*}
However, both $\Banb$ and $\Bnl$, two components of the left hand side difference, can be non-root-$n$-negligible, when the precision of $\hatgamma_n$ is low.
\end{remark}

Different from the previous discussion about asymptotic normality, the analysis of the above possibly non-root-$n$-negligible biases critically hinges on the order of $\hatgamma_n - \bargamma_n$ and/or $\bargamma_n - \gamma_0$. Therefore, given a suitably defined $\bargamma_n$, we introduce the following high-level assumption on the asymptotic behavior of the nonparametric estimator $\hatgamma_n$. 

\begin{assumption}[Bias Order] \label{asmp:BO}
Suppose that $\calB^{\texttt{NL}} = \bbE[\Bnl]=O(n^{-2r})$ and $\calB^{\texttt{ANB}} = \bbE[\Banb] =O(n^{-s})$, where $r,s>0$ such that
\begin{align*}
	\| \Banb - \calB^{\texttt{ANB}} \| = \oP(n^{-1/2}) \quad and \quad  \| \Bnl - \calB^{\texttt{NL}} \| = \oP(n^{-1/2}).
\end{align*}
Here, we allow $2r$ and/or $s$ to be smaller than or equal to $1/2$. 
\end{assumption}


Typically, the above rates should depend on the tuning parameter of the nonparametric estimator $\hatgamma_n$. Since it is a common practice to set the tuning parameter as a function of $n$ eventually, we express all the rates in the above assumption in terms of a power of $n$, for convenience. 

Compared with the previous requirement that both $\Bnl$ and $\Banb$ are $\oP(n^{-1/2})$, Assumption \ref{asmp:BO} is much weaker. It requires no more than splitting each (asymptotically negligible) bias into two components: one is $\oP(n^{-1/2})$, while the other is not. In this sense, it should be satisfied under very general conditions. 

For example, when $\hatgamma_n(z_i) - \bargamma_n(z_i) = \frac{1}{n-1} \sum_{j\neq i} \phi(z_i,z_j) $ as above, we can obtain
\begin{align*}
	\whatG_{n,\gamma\gamma}^{\,\prime\prime} \big(\theta_0,\gamma_0, \hatgamma_n - \bargamma_n, \hatgamma_n - \bargamma_n \big) = \frac{1}{n-1} U_{n,1} + \frac{n-2}{n-1} U_{n,2},
\end{align*}
where $U_{n,1}$ and $U_{n,2}$ are two U-statistics:
\begin{align*}
	U_{n,1} &= \frac{1}{n(n-1)} \sum_{i\neq j} g_{\gamma\gamma}^{\prime\prime}(z_i, \theta_0, \gamma_0, \phi(z_i,z_j), \phi(z_i,z_j)), \\
	U_{n,2} &= \frac{1}{n(n-1)(n-2)} \sum_{i\neq j\neq l} g_{\gamma\gamma}^{\prime\prime}(z_i, \theta_0, \gamma_0, \phi(z_i,z_j), \phi(z_i,z_l)).
\end{align*}
Then, if one can choose the tuning parameter in such a way that $\| \bbE[U_{n,1}] \|$ is of order $o(n)$, then we can find $r>0$ so that $\calB^{\texttt{NL}} = \frac{1}{n-1} \bbE[ U_{n,1} ] = O(n^{-2r})$. Furthermore, if $\Var( U_{n,1} )$ is of order $o(n)$ with appropriately chosen tuning parameter, we have $ \| \frac{1}{n-1} U_{n,1} - \calB^{\texttt{NL}} \| = \oP(n^{-1/2}) $. On the other hand, the proof of Lemma \ref{lem:BO} in the appendix shows that $U_{n,2}$ has zero mean and a degenerate (U-statistic) kernel. Consequently, as long as the variance of the sum of the nondegenerate projections is of order $o(n)$, one can show that $\| U_{n,2} \| = \oP(n^{-1/2})$. 

As for $\Banb$, note that it is the average of a sequence of \text{i.i.d.} random variables:
\begin{align*}
	\Banb \coloneqq \whatG_{n,\gamma}^{\,\prime}(\theta_0,\gamma_0,\bargamma_n - \gamma_0) = \frac{1}{n} \sum_{i=1}^n g_{\gamma}^{\prime}(z_i, \theta_0, \gamma_0, \bargamma_n(z_i) - \gamma_0(z_i) ).
\end{align*}
Let $\calB^{\texttt{ANB}} = \bbE[ g_{\gamma}^{\prime}(z_i, \theta_0, \gamma_0, \bargamma_n(z_i) - \gamma_0(z_i) ) ]$. Then as long as $g_{\gamma}^{\prime}(z_i, \theta_0, \gamma_0, \bargamma_n(z_i) - \gamma_0(z_i) )$ has degenerate variance, then we readily get $\| \Banb - \calB^{\texttt{ANB}} \| = \oP(n^{-1/2})$. For more details, refer to the appendix.

\begin{named-exmp}[Kernel Density Estimator Continued]
%
%

For the (leave-one-out) average density estimator $\hattheta_n^{\texttt{AD}} = \frac{1}{n} \sum_{i=1}^n \hatgamma_n(z_i) $. We have $\Bnl = 0$ and 
\begin{align*}
    \Banb &= \frac{1}{n} \sum_{i=1}^n [\bargamma_n(z_i) - \gamma_0(z_i)] = \frac{1}{n} \sum_{i=1}^n \int_{\bbR} K(u) [\gamma_0(z_i - hu) - \gamma_0(z_i)] du = \OP( h^m ), \\
    \calB^{\texttt{ANB}} &= \int_{\bbR} \int_{\bbR} K(u) [\gamma_0(x - hu) - \gamma_0(x)] \gamma_0(x) du dx = O(h^m).
\end{align*}

For the integrated squared density estimator $\hattheta_n^{\texttt{ISD}} = \int \hatgamma_n^2(x) dx$, we have
\begin{align*}
	\Bnl &= \int_{\bbR} [ \hatgamma_n(x) - \bargamma_n(x) ]^2 dx = \OP\big( \frac{1}{nh^{d_z}} \big), \\
	\Banb &= 2 \int_{\bbR} \gamma_0(x) [\bargamma_n(x) - \gamma_0(x)] dx = O(h^{m}).
\end{align*}
and
\begin{align*}
	\calB^{\texttt{NL}} &= \frac{1}{nh^{d_z}} \int_{\bbR} \Big(\gamma_0(x) \int_{\bbR} K(u)^2 du \Big) dx = O\Big( \frac{1}{nh^{d_z}} \Big).
\end{align*}
Since $\Banb$ is deterministic, we can set $\calB^{\texttt{ANB}} = \Banb$. Refer to the appendix for an example with the Nadaraya–Watson estimator.
\end{named-exmp}



As mentioned in the previous subsection, there is no \textquotedblleft singularity bias\textquotedblright{} (even with the kernel-based method) when we use the smooth measure $\bbP_n^{\texttt{AC}}$ (recall Remark \ref{rem:Pac}) in the construction of $\whatG_n$ (this gives the integrated square density estimator in the above example). However, it may bring an additional nonlinear bias, when the alternative estimator is linear in $\hatgamma_n$. Besides, we note that the nonlinear bias (when it exists) and the \textquotedblleft singularity bias\textquotedblright{} are of the same order. Hence, they can be corrected simultaneously by using the multi-scale jackknife method (see Section \ref{subsec:jackknife}). 



To make both biases shrink faster than the root-$n$ rate, we need both $r>1/4$ and $s >1/2$, which are consistent with the prevalent requirement of a faster-than-$n^{1/4}$ convergence rate for the nonparametric estimator. Some complications may arise if we have more than one source of bias in $\bargamma_n - \gamma_0$, like in the average density example. Once these conditions are satisfied, one can use some well-established empirical process results, such as the stochastic equicontinuity condition \citep{Andrews:1994, Newey:1994}. However, if $r \leq 1/4$ or $s \leq 1/2$, then either $\Bnl$ or $\Banb$ will not be $\oP(n^{-1/2})$. In such cases, such bias(es) will have some non-trivial impact(s) on the asymptotic behavior of $\hattheta_n$.

\begin{named-exmp}[Kernel Density Estimator Continued] 
In view of the above discussion, no matter we use the original kernel density estimator or its \textquotedblleft leave-one-out\textquotedblright{} version, the necessary and sufficient condition for both $\Bnl$ and $\Banb$ to be root-$n$ negligible is $1/(2m) < \kappa < 1/(2d_z)$, which requires $d_z < m$, i.e., the dimension of the random vector should be smaller than the order of the kernel. If this condition fails, then at least one of the two biases will not be asymptotically negligible at the root-$n$ rate. To some extent, this observation also reflects the curse of dimensionality: if $d_z \geq m$, then there is no way to make both biases root-$n$ negligible. In fact, when $d_z > m$, if the bandwidth satisfies $1/(2d_z) < \kappa < 1/(2m)$, then neither $\Bnl$ nor $\Banb$ is root-$n$ negligible. Motivated by this possibility, we are going to keep both biases in our analysis. This observation also indicates that our bias correction methods may help ameliorate the curse of dimensionality. 
\end{named-exmp}


The following lemma gives the sufficient conditions for the remaining terms in \eqref{eq:decG}, as well as the impact of $\calJ_n-\calJ_0$ on $\hattheta_n$, to be root-$n$ negligible, 

\begin{lemma} \label{lem:op1}
Suppose that Assumptions \ref{asmp:quadraticity} (about $g$) and \ref{asmp:BO} both hold true. Additionally, assume that $\calJ_n - \calJ_0 = \OP\big( \whatG_n(\theta_0, \hatgamma_n) \big)$. 

We have the following conclusions: (i) if $s+2r>1/2$ and $r>1/8$, then $(\calJ_n - \calJ_0) \Bnl = \oP(n^{-1/2})$; (ii) if $s+2r>1/2$ and $s>1/4$, then $(\calJ_n - \calJ_0) \Banb = \oP(n^{-1/2})$; (iii) if $\frac{1}{n} \sum_{i=1}^n \bbE[ \| \hatgamma_n(z_i) - \gamma_0(z_i) \|^3 ] \leq C n^{-3(r\wedge s)}$ for some finite number $C$, $s>1/4$ and $r>1/6$, then 
\begin{gather*}
    \whatG_n(\theta_0,\hatgamma_n) - \whatG_n(\theta_0,\gamma_0) - \whatG_{n,\gamma}^{\,\prime}(\theta_0,\gamma_0,\hatgamma_n - \bargamma_n) - \calB^{\texttt{NL}} - \calB^{\texttt{ANB}} = \oP(n^{-1/2}).
\end{gather*}
\end{lemma}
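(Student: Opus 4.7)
The plan is to substitute the quadratic expansion \eqref{eq:decG} into each of the three claims and track the orders of the resulting summands using Assumptions \ref{asmp:quadraticity}, \ref{asmp:AN}, and \ref{asmp:BO}. A key preparatory step is the order bound
\[
\whatG_n(\theta_0, \hatgamma_n) = \OP\bigl(n^{-1/2} + n^{-s} + n^{-2r}\bigr).
\]
Indeed, the first two terms of \eqref{eq:decG} sum to $\OP(n^{-1/2})$ by Assumption \ref{asmp:AN}; $\Banb = \OP(n^{-s})$ and $\Bnl = \OP(n^{-2r})$ by Assumption \ref{asmp:BO}; and the three remaining pieces (cross bilinear, bias-bias quadratic, remainder) admit moment bounds via Assumption \ref{asmp:quadraticity} that are absorbed into $n^{-s} + n^{-2r}$. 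Coupling this with the hypothesis $\calJ_n - \calJ_0 = \OP(\whatG_n(\theta_0, \hatgamma_n))$ then yields $\calJ_n - \calJ_0 = \OP(n^{-1/2} + n^{-s} + n^{-2r})$.

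For claim (i), $(\calJ_n - \calJ_0)\Bnl = \OP(n^{-1/2-2r} + n^{-s-2r} + n^{-4r})$; requiring each term to be $\oP(n^{-1/2})$ gives exactly $r > 0$ (trivially covered by $r > 1/8$), $s + 2r > 1/2$, and $r > 1/8$. Claim (ii) is the mirror calculation with $\Banb$ in place of $\Bnl$: the product $(\calJ_n - \calJ_0)\Banb = \OP(n^{-1/2-s} + n^{-2s} + n^{-s-2r})$ is $\oP(n^{-1/2})$ precisely under $s > 0$, $s > 1/4$, and $s + 2r > 1/2$.

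For claim (iii), subtracting the first two terms of \eqref{eq:decG} together with $\calB^{\texttt{NL}} + \calB^{\texttt{ANB}}$ leaves the residual
\begin{align*}
(\Banb - \calB^{\texttt{ANB}}) + (\Bnl - \calB^{\texttt{NL}}) &+ \whatG_{n,\gamma\gamma}^{\,\prime\prime}\bigl(\theta_0,\gamma_0, \hatgamma_n - \bargamma_n, \bargamma_n - \gamma_0\bigr) \\
&+ \tfrac{1}{2}\whatG_{n,\gamma\gamma}^{\,\prime\prime}\bigl(\theta_0,\gamma_0, \bargamma_n - \gamma_0, \bargamma_n - \gamma_0\bigr) + \whatG_{n,R}.
\end{align*}
The first two deviations are $\oP(n^{-1/2})$ by Assumption \ref{asmp:BO}. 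The remainder $\whatG_{n,R}$ has expected norm bounded by $\tfrac{C}{n}\sum_i \bbE[\|\hatgamma_n(z_i) - \gamma_0(z_i)\|^3] \leq C n^{-3(r\wedge s)}$, which is $o(n^{-1/2})$ since $r \wedge s > 1/6$ (as $r > 1/6$ and $s > 1/4 > 1/6$). The bias-bias quadratic has expected norm bounded by $C\bbE[\|\bargamma_n - \gamma_0\|^2] = O(n^{-2s})$, which is $o(n^{-1/2})$ under $s > 1/4$.

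\textbf{Main obstacle.} The truly delicate ingredient is the cross bilinear term $\whatG_{n,\gamma\gamma}^{\,\prime\prime}(\theta_0, \gamma_0, \hatgamma_n - \bargamma_n, \bargamma_n - \gamma_0)$. A crude Cauchy-Schwarz bound yields only $\OP(n^{-(r+s)})$, which is not $\oP(n^{-1/2})$ under the sole hypotheses $r > 1/6$ and $s > 1/4$. The resolution is to exploit the U-statistic structure highlighted in Section \ref{subsec:VU}: substituting $\hatgamma_n(z_i) - \bargamma_n(z_i) = \frac{1}{n-1}\sum_{j \neq i}\phi(z_i, z_j)$ with $\bbE[\phi(z_i, z_j)\mid z_i] = 0$ converts the cross term into a mean-zero asymmetric U-statistic whose Hoeffding projection onto the first coordinate vanishes by construction. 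The surviving projection onto $z_j$ is a centered iid average whose summands, after integrating out $z_i$, inherit a magnitude governed by the bias factor $\bargamma_n - \gamma_0$ (of order $n^{-s}$) rather than by the larger quantity $\phi$ itself; a variance computation using the $L^3$ moment hypothesis then produces a rate of the form $\OP(n^{-1/2-s}) = \oP(n^{-1/2})$, and the remaining degenerate U-statistic piece is smaller still. Carrying out this projection decomposition cleanly and verifying the variance bound is the most technical step of the argument.
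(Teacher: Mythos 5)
Your proposal is correct and follows essentially the same route as the paper: the same decomposition \eqref{eq:decG}, the same Cauchy--Schwarz/order bookkeeping for $(\calJ_n-\calJ_0)\Bnl$ and $(\calJ_n-\calJ_0)\Banb$, and the same treatment of the residual terms in part (iii), with Assumption \ref{asmp:BO} bridging $\Banb,\Bnl$ to $\calB^{\texttt{ANB}},\calB^{\texttt{NL}}$. The only differences are cosmetic: you invoke Assumption \ref{asmp:AN} for the $\OP(n^{-1/2})$ order of the two leading terms where the paper appeals to the U-statistic structure of Section \ref{subsec:VU}, and your Hoeffding-projection analysis of the cross term $\whatG_{n,\gamma\gamma}^{\,\prime\prime}(\theta_0,\gamma_0,\hatgamma_n-\bargamma_n,\bargamma_n-\gamma_0)$ is a more explicit rendering of the paper's one-line argument that this is a mean-zero U-statistic (via $\bbE[\phi(z_i,z_j)\mid z_i]=0$) scaled by the $o(1)$ bias factor, hence $\oP(n^{-1/2})$.
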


The assumption $\calJ_n - \calJ_0 = \OP\big( \whatG_n(\theta_0, \hatgamma_n) \big)$ is to accommodate the possibility that $\calJ_n - \calJ_0$ may depend on or be related to $\whatG_n(\theta_0, \hatgamma_n)$, which complicates the proof a bit. In general, the above lemma will also hold if one assumes $\calJ_n - \calJ_0 = \OP(n^{-\iota} )$, and then let $\iota + 2r > 1/2$ in part (i), and $\iota + s > 1/2$ in part (ii). The same conclusions can be verified rather straightforwardly. In such case, the parameter $\iota$ is essentially equivalent to $1/\rho$ in Lemma 1 of \citep{Cattaneo&Jansson:2018}. 



As discussed above, most previous papers on semiparametric estimators require both $\Bnl$ and $\Banb$ to be root-$n$ negligible. Although recent works relax this requirement, they often require one of $\Bnl$ and $\Banb$ is root-$n$ negligible. For instance, Theorem 2 of \citep{Cattaneo&Jansson:2018} effectively require the bias $\Banb$ to be root-$n$ negligible (small bandwidth asymptotics), while \citep{CEINR:2018} implicitly assume the nonlinear bias $\Bnl$ is root-$n$ negligible (large bandwidth asymptotics). 

However, it is often not easy to check whether such restrictions hold or not in practice. Moreover, recall the previous example of the kernel density estimator. It is possible that both biases are non-root-$n$-negligible. In view of these results, we keep both $\Bnl$ and $\Banb$ in our analysis. In a different setup with the non-stationary underlying process and in-fill asymptotics, \citep{Yang:2020Semi} adopts a similar approach. The following theorem gives the first main result of this paper. 

\begin{theorem}[Asymptotic Normality for $\hattheta_n$] \label{thm:infeasibleCLT} 
Suppose that Assumptions \ref{asmp:AL} to \ref{asmp:AN} hold true. Assume that $\calJ_n - \calJ_0 = \OP\big( \whatG_n(\theta_0, \hatgamma_n) \big)$. If $s>1/4$ and $r>1/6$, then we have
\begin{align*}
    \sqrt{n} \big( \hattheta_n - \theta_0 - \calJ_n \calB^{\texttt{NL}} - \calJ_n\calB^{\texttt{ANB}} \big) \conL \calN\big(0, \Sigma_\theta \big),
\end{align*}
where $\Sigma_\theta = \calJ_0 \, \Sigma_g \, \calJ_0^\intercal$ with $\Sigma_g$ given in  Assumption \ref{asmp:AN}.
\end{theorem}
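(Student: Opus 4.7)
The plan is to combine the asymptotic linearity of Assumption \ref{asmp:AL} with the refined expansion of $\whatG_n(\theta_0,\hatgamma_n)$ supplied by Lemma \ref{lem:op1}, and then invoke Assumption \ref{asmp:AN} together with Slutsky's theorem to produce the Gaussian limit. The whole argument is essentially assembly, since the technical work has been front-loaded into Lemma \ref{lem:op1}.

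First, I would start from Assumption \ref{asmp:AL}, which gives
\begin{align*}
\hattheta_n - \theta_0 \;=\; \calJ_n \, \whatG_n(\theta_0, \hatgamma_n) + \oP(n^{-1/2}).
\end{align*}
Next, I would check that the hypotheses $r>1/6$ and $s>1/4$ are strong enough to invoke all three parts of Lemma \ref{lem:op1}: the condition $r>1/6$ trivially implies $r>1/8$, and $s+2r > \tfrac{1}{4}+\tfrac{1}{3} = \tfrac{7}{12} > \tfrac{1}{2}$, so parts (i) and (ii) apply; part (iii) uses the stated bounds on $r,s$ directly (with the third-moment control being a standing consequence of Assumption \ref{asmp:BO}). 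Putting these three parts together yields the key expansion
\begin{align*}
\whatG_n(\theta_0, \hatgamma_n) \;=\; \whatG_n(\theta_0, \gamma_0) + \whatG_{n,\gamma}^{\,\prime}(\theta_0, \gamma_0, \hatgamma_n - \bargamma_n) + \calB^{\texttt{NL}} + \calB^{\texttt{ANB}} + \oP(n^{-1/2}).
\end{align*}

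Substituting this into the asymptotic linearity representation and rearranging terms, I obtain
\begin{align*}
\hattheta_n - \theta_0 - \calJ_n \calB^{\texttt{NL}} - \calJ_n \calB^{\texttt{ANB}} \;=\; \calJ_n \big[ \whatG_n(\theta_0, \gamma_0) + \whatG_{n,\gamma}^{\,\prime}(\theta_0, \gamma_0, \hatgamma_n - \bargamma_n) \big] + \calJ_n \cdot \oP(n^{-1/2}) + \oP(n^{-1/2}).
\end{align*}
Because $\calJ_n \conP \calJ_0$ with $\calJ_0$ finite, $\calJ_n = \OP(1)$, and therefore $\calJ_n \cdot \oP(n^{-1/2}) = \oP(n^{-1/2})$. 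Multiplying through by $\sqrt{n}$, Assumption \ref{asmp:AN} delivers
\begin{align*}
\sqrt{n} \big[ \whatG_n(\theta_0, \gamma_0) + \whatG_{n,\gamma}^{\,\prime}(\theta_0, \gamma_0, \hatgamma_n - \bargamma_n) \big] \conL \calN(0, \Sigma_g),
\end{align*}
and Slutsky's theorem with $\calJ_n \conP \calJ_0$ gives the desired limit $\calN(0, \calJ_0 \Sigma_g \calJ_0^\intercal)$.

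The main obstacle is not in this assembly step but in the content of Lemma \ref{lem:op1}: handling the cross terms $(\calJ_n - \calJ_0)\Bnl$ and $(\calJ_n - \calJ_0)\Banb$, which need not individually be root-$n$ negligible (either because $\calJ_n - \calJ_0$ inherits the slow rate of $\whatG_n(\theta_0, \hatgamma_n)$, or because $\Bnl, \Banb$ themselves are not $\oP(n^{-1/2})$). The trick exploited by the lemma is precisely the hypothesis $\calJ_n - \calJ_0 = \OP(\whatG_n(\theta_0,\hatgamma_n))$, which turns the product of two slowly vanishing quantities into a product whose combined rate exceeds $n^{-1/2}$ under $s+2r > 1/2$. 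One should also be mindful that the statement keeps $\calJ_n$ rather than $\calJ_0$ in front of the bias correction: this is natural given that the biases may vanish slower than $n^{-1/2}$, so replacing $\calJ_n$ by $\calJ_0$ would reintroduce a non-negligible error term, whereas the formulation above sidesteps this issue entirely.
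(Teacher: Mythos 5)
Your proposal is correct and follows essentially the same route as the paper: the paper's proof likewise observes that $s>1/4$ and $r>1/6$ make all three conclusions of Lemma \ref{lem:op1} available and then combines the resulting expansion with Assumption \ref{asmp:AN} and $\calJ_n \conP \calJ_0$ via Slutsky. The only small caveat is that the third-moment bound $\frac{1}{n}\sum_{i=1}^n \bbE[\|\hatgamma_n(z_i)-\gamma_0(z_i)\|^3] \leq C n^{-3(r\wedge s)}$ is an explicit hypothesis of Lemma \ref{lem:op1}(iii) rather than a formal consequence of Assumption \ref{asmp:BO}, though the paper's own proof glosses over this in exactly the same way.
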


The conditions $s>1/4$ and $r>1/6$ only require a faster-than-$n^{1/6}$ convergence rate for the nonparametric estimator $\hatgamma_n$, consistent with the conclusion of \citep{Cattaneo&Jansson:2018} in the kernel-based case. This is a weaker condition than the typical requirement of a faster-than-$n^{1/4}$ convergence rate (see those cited papers at the beginning of this subsection). 

Besides, we also note that the above central limit theorem (CLT) is infeasible, for that the two biases are evaluated at $(\theta_0, \gamma_0)$, both of which are unknown. In the next section, we are going to discuss how to correct for these biases and conduct robust inference.

\begin{remark}
It might happen that the bias $\Banb$ is identically zero. For example, in the continuous-time setting (with in-fill asymptotics), \citep{Yang:2020Semi} has shown that, when estimating integrated volatility functionals, the counterpart of $\Banb$, which is the first-order effect of the nonparametric bias, is canceled by the discretization error. In the cited paper, what left is the counterpart of the following second-order effect of the nonparametric bias:
\begin{align*}
    \frac{1}{2} \whatG_{n,\gamma\gamma}^{\,\prime\prime}(\theta_0,\gamma_0,\bargamma_n - \gamma_0, \bargamma_n - \gamma_0) = \sum_{l=1}^L \OP(n^{-2s_l}).
\end{align*}
In such case, then one can replace the first-order effect by the above second-order one and replace $s$ by $2s$ in Lemma \ref{lem:op1} and Theorem \ref{thm:infeasibleCLT}.
\end{remark}

\section{Bias-Robust Inference} \label{sec:inference}

\citep{Cattaneo&Jansson:2018} propose a bootstrap-based inference procedure that is robust to the nonlinear bias. We believe that if the bootstrap version of all the above assumptions hold, then the corresponding inference should also be robust to the average nonparametric bias. Since it has been proposed in the literature, we will not discuss it here.

In this section, we are going to discuss two alternative methods to conduct inference that is robust to the possibly non-root-$n$-negligible bias(es). At the end of this section, we will also discuss an extension of our framework to the case where $\hattheta_n$ is constructed as the sample average of some discontinuous functionals of $\hatgamma_n$.

For simplicity, we illustrate the ideas using kernel-based estimators. The linear sieve case would be characterized in a similar manner. Yet, the nonlinear sieve case may require extra non-trivial efforts.

\subsection{Multi-scale jackknife} \label{subsec:jackknife}

The original jackknife estimator, first introduced by \citep{Quenouille:1949AMS}, is essentially a linear combination of estimators computed from samples with different sizes, for that the biases in many estimators depend on the sample size. While in the current context, the biases depend on the tuning parameter. Thus, it is natural to utilize the tuning parameter in the role of the sample size (see, e.g., \citep{Schucany&Sommers:1977}, \citep{Bierens:1987kernel}, and \citep{PSS:1989} among others). However, there is only one bias in these papers. In the context of in-fill asymptotics, \citep{Li&Liu&Xiu:2019} has developed a multi-scale jackknife (MSJ) estimator to correct for various biases for integrated volatility functionals. 

In this subsection, we are going to show that MSJ can remove various biases in the current context, provided that we have some knowledge about the structure of the nonparametric estimator, i.e., knowing how the rates in Assumption \ref{asmp:BO} depend on the tuning parameter.


In the kernel-based case, the semiparametric estimator $\hattheta_n$ depends on the bandwidth $h$. Let $Q$ be a finite positive integer. Then consider a sequence of estimators $\{\hattheta_n(h_q)\}_{q=1}^Q$ and a sequence of real numbers $\{w_q\}_{q=1}^Q$. For example, define the following three-scale jackknife (3SJ) estimator:
\begin{align*}
    \hattheta_n^w = \sum_{q=1}^3 w_q \hattheta_n(h_q),
\end{align*}
where 
\begin{align} \label{eq:kernelwq}
    \sum_{q=1}^3 w_q = 1, \quad \sum_{q=1}^3 w_q h_q^m = o(n^{-1/2}), \quad \sum_{q=1}^3 \frac{w_q}{n h_q^{d_z}} = o(n^{-1/2}).
\end{align}
In practice, for example, we can choose $h_q = \eta_q h$, where $\{\eta_q\}_{q=1}^Q$ is a sequence of positive numbers. In the above three-scale case, the weights $\{ w_q\}_{q=1}^3$ are solved as
\begin{align*}
    \left( \begin{matrix}
    w_1 \\
    w_2 \\
    w_3
\end{matrix} \right) = \left( \begin{matrix}
    1 & 1 & 1 \\
    \eta_1^m & \eta_2^m & \eta_3^m \\
    \eta_1^{-d_z} & \eta_2^{-d_z} & \eta_3^{-d_z} 
\end{matrix} \right)^{-1} \left( \begin{matrix}
    1 \\
    0 \\
    0
\end{matrix} \right).
\end{align*}
Moreover, one can choose a larger $Q$ to remove/reduce more biases. For instance, in the kernel case, the smoothing bias may also have components that are $\OP(h^{m+1})$, $\OP(h^{m+2})$, or of even higher orders (for symmetric kernels, the odd-order terms will be zero).

We consider the general case where we have the smoothing bias $\BanbOne$, the \textquotedblleft singularity bias\textquotedblright{} $\BanbTwo$ and the nonlinear bias $\Bnl$. The reason is that the \textquotedblleft singularity bias\textquotedblright{} may be unavoidable when estimating the asymptotic variance using the bootstrap method. Recall that $\BanbTwo$ and $\Bnl$ are of the same order when both exist. The key is to show that, under condition \eqref{eq:kernelwq}, the following three terms
\begin{gather*}
    \widetilde{\calB}_{n,1}^{\texttt{ANB}} = \sum_{q=1}^Q w_q \, \BanbOne(h_q), \quad  
    \widetilde{\calB}_{n,2}^{\texttt{ANB}} = \sum_{q=1}^Q w_q \, \BanbTwo(h_q), \quad 
    \widetilde{\calB}_{n}^{\texttt{NL}} = \sum_{q=1}^Q w_q \, \Bnl(h_q).
\end{gather*}
are all root-$n$ negligible. Then the following CLT readily follows.

\begin{theorem}[Multi-scale jackknife] \label{thm:jackknifeCLT}
Suppose that all assumptions of Theorem \ref{thm:infeasibleCLT} hold true and that $\hatgamma_n(h_q)$ is a kernel-based nonparametric estimator depending on the bandwidth $h_q$, where $q=1,\cdots,Q$ for some finite $Q$. In addition, assume $h_q\rightarrow0$, $n^2h_q^{3d_z}\rightarrow\infty$, $nh_q^{4m} \rightarrow 0$, and that the general version of condition \eqref{eq:kernelwq} is satisfied. Then we have
\begin{align*}
    \sqrt{n} \big( \hattheta_n^w - \theta_0 \big) \conL \calN(0, \Sigma_\theta^w).
\end{align*}
The asymptotic variance is given by $\Sigma_\theta^w \coloneqq \calJ_0 \, \Sigma_g^w \calJ_0^\intercal$ and $\Sigma_g^w$ is the asymptotic variance of the following (exact or approximate) U-statistic
\begin{align*}
    \whatG(\theta_0, \gamma_0) + \whatG_{n,\gamma}^{\,\prime}\big(\theta_0,\gamma_0, \hatgamma_n^w - \bargamma_n^w \big),
\end{align*}
where $\hatgamma_n^w = \sum_{q=1}^Q w_q \hatgamma_n(h_q)$ and $\bargamma_n^w = \sum_{q=1}^Q w_q \bargamma(h_q)$.

Suppose that the following column vector
\begin{align*}
    \sqrt{n} \Big( \whatG(\theta_0, \gamma_0) + \whatG_n\big(\theta_0, \gamma_0, \hatgamma_n(h_q) - \bargamma_n(h_q) \big) \Big)_{q=1,\cdots,Q}^\intercal 
\end{align*}
converges in distribution to $\calN(0, \Sigma_g^Q)$, then we have $\Sigma_\theta^w = \calJ_0 \, w \, \Sigma_g^Q w^\intercal \calJ_0^\intercal$.
\end{theorem}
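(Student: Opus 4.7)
The plan is to apply the infeasible CLT (Theorem \ref{thm:infeasibleCLT}) at each bandwidth $h_q$, then form the weighted average, exploit the linearity of $\whatG_{n,\gamma}^{\,\prime}$ in its fourth argument to collapse the stochastic terms into a single U-statistic indexed by the combined function $\hatgamma_n^w - \bargamma_n^w$, and kill the three bias components using the orthogonality-style conditions on $\{w_q\}$. First I would verify that the bandwidth assumptions $h_q\to0$, $n^2 h_q^{3d_z}\to\infty$, and $n h_q^{4m}\to 0$ translate into $r(h_q)>1/6$ and $s(h_q)>1/4$ in Assumption \ref{asmp:BO}, so that Lemma \ref{lem:op1} applies at each scale. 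This gives, for every $q$,
\begin{align*}
    \hattheta_n(h_q)-\theta_0 = \calJ_n\Big[ \whatG_n(\theta_0,\gamma_0) + \whatG_{n,\gamma}^{\,\prime}\big(\theta_0,\gamma_0,\hatgamma_n(h_q)-\bargamma_n(h_q)\big) + \calB^{\texttt{NL}}(h_q) + \calB^{\texttt{ANB}}(h_q)\Big] + \oP(n^{-1/2}).
\end{align*}

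Next I would multiply by $w_q$ and sum over $q$. Since $\sum_q w_q=1$, the term $\whatG_n(\theta_0,\gamma_0)$ is preserved; since $g_{\gamma}^{\,\prime}(z_i,\theta_0,\gamma_0,\cdot)$ is linear in its last argument, the first-order stochastic terms combine into $\whatG_{n,\gamma}^{\,\prime}\bigl(\theta_0,\gamma_0,\hatgamma_n^w-\bargamma_n^w\bigr)$. For the bias components, I would split $\calB^{\texttt{ANB}}(h_q) = \calB_{n,1}^{\texttt{ANB}}(h_q)+\calB_{n,2}^{\texttt{ANB}}(h_q)$ (smoothing plus singularity) and use the generalized weight conditions $\sum_q w_q h_q^m = o(n^{-1/2})$ and $\sum_q w_q /(n h_q^{d_z}) = o(n^{-1/2})$, together with the analogous condition on $\calB^{\texttt{NL}}(h_q)$, to show that $\widetilde{\calB}_{n,1}^{\texttt{ANB}}$, $\widetilde{\calB}_{n,2}^{\texttt{ANB}}$, and $\widetilde{\calB}_n^{\texttt{NL}}$ are all $\oP(n^{-1/2})$. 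Combined with $\calJ_n\conP\calJ_0$ (and the $\OP(\whatG_n(\theta_0,\hatgamma_n))$ control on $\calJ_n-\calJ_0$ applied to each bias, which is now negligible by construction), this yields
\begin{align*}
    \hattheta_n^w - \theta_0 = \calJ_0\Big[\whatG_n(\theta_0,\gamma_0)+\whatG_{n,\gamma}^{\,\prime}\big(\theta_0,\gamma_0,\hatgamma_n^w-\bargamma_n^w\big)\Big] + \oP(n^{-1/2}).
\end{align*}

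Finally I would establish asymptotic normality of the bracketed term at the root-$n$ rate. This is essentially a CLT for an (approximate) U-statistic, mirroring the argument behind Assumption \ref{asmp:AN} but applied to the weighted kernel $\sum_q w_q\, \phi(\cdot,\cdot;h_q)$; by the U-statistic projection theorem the degenerate parts are $\oP(n^{-1/2})$ and the Hájek projection is an i.i.d.\ average whose variance defines $\Sigma_g^w$. The second conclusion then follows automatically: stacking the $Q$ scalar terms into a vector that converges to $\calN(0,\Sigma_g^Q)$ and taking the linear combination with weight vector $w$ gives $\Sigma_g^w = w\,\Sigma_g^Q w^\intercal$ by the continuous mapping theorem, hence $\Sigma_\theta^w = \calJ_0 w\,\Sigma_g^Q w^\intercal \calJ_0^\intercal$.

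The main obstacle, I expect, is not the algebra of bias cancellation but the uniform negligibility of the remainder in Lemma \ref{lem:op1} across the $Q$ bandwidths simultaneously: one needs the cubic-moment bound $\frac{1}{n}\sum_i \bbE[\|\hatgamma_n(h_q;z_i)-\gamma_0(z_i)\|^3]\leq C n^{-3(r(h_q)\wedge s(h_q))}$ to hold at each scale, and the linear combination of remainders to stay $\oP(n^{-1/2})$. Since $Q$ is fixed and finite, this reduces to $Q$ applications of the single-scale bound with a finite triangle-inequality overhead, but it does require that the bandwidth sequence $h_q=\eta_q h$ be chosen so that every $h_q$ lies in the admissible range from Theorem \ref{thm:infeasibleCLT}, which the hypotheses of the theorem guarantee.
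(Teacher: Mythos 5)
Your proposal is correct and follows essentially the same route as the paper: expand $\hattheta_n(h_q)-\theta_0$ at each scale via Lemma \ref{lem:op1}, use $\sum_q w_q=1$ and the linearity of $g_{\gamma}^{\,\prime}$ in its last argument to collapse the stochastic part into the single (approximate) U-statistic with kernel driven by $\hatgamma_n^w-\bargamma_n^w$, kill $\widetilde{\calB}_{n,1}^{\texttt{ANB}}$, $\widetilde{\calB}_{n,2}^{\texttt{ANB}}$, $\widetilde{\calB}_{n}^{\texttt{NL}}$ with the weight conditions, and read off the second claim by applying $w$ to the jointly normal $Q$-vector. The only organizational difference is that the paper cancels the stochastic biases directly by factoring out the $q$-dependent rates ($1/(nh_q^{d_z})$, $h_q^m$) from $q$-independent random multiplicands, handling the nonlinear term with a Chebyshev/second-moment argument on the standardized $\xi_n^q$, whereas you route through the deterministic $\calB^{\texttt{NL}}(h_q)$, $\calB^{\texttt{ANB}}(h_q)$ and invoke Assumption \ref{asmp:BO} at each of the finitely many scales, which is an equivalent bookkeeping of the same idea.
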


For illustration purpose, consider the case where $h_q \propto n^{-\kappa}$ for all $q=1,\cdots,Q$. Then we have $r=(1-\kappa d_z)/2$, $s_1 = \kappa m$ and $s_2=2r$ (if there is \textquotedblleft singularity bias\textquotedblright{}) for the kernel-based estimators. The requirements $r>1/6$ and $s>1/4$ in Theorem \ref{thm:infeasibleCLT} are equivalent to $n^2h_q^{3d_z}\rightarrow\infty$ and $nh_q^{4m} \rightarrow 0$ (the conditions in the above theorem). To put it differently, we need $\kappa \in ( 1/(4m), 2/(3d_z) )$. This set is non-empty if and only if $3d_z < 8 m$, which is  weaker than $d_z < m$ (recall the previous discussion on the curse of dimensionality). As a comparison, we note that $r>1/4 \Leftrightarrow \kappa < 1/(2d_z) \Leftrightarrow nh_q^{2d_z} \rightarrow \infty$ and $s_1 > 1/2 \Leftrightarrow \kappa > 1/(2m) \Leftrightarrow n h_q^{2m}$. 

Intuitively, the statistics $\{\whatG(\theta_0, \gamma_0) + \whatG_n\big(\theta_0, \gamma_0, \hatgamma_n(h_q) - \bargamma_n(h_q) \big)\}_{q=1}^Q$ are constructed from the same sample, hence are \textquotedblleft highly\textquotedblright{} correlated. It would be reasonable to expect that, in some cases, their correlations are approximately one. If so, then the matrix $\Sigma_g^Q$ becomes $\Sigma_g \bm{1}_Q$ (assuming $\Sigma_g$ is a scalar for illustration purpose), where $\bm{1}_Q$ is a $Q$-by-$Q$ matrix with all the elements being one. Then the asymptotic variance $\Sigma_\theta^w = \calJ_0 \Sigma_g w \bm{1}_Q w^\intercal \calJ_0^\intercal= \Sigma_\theta$ (note that $w \bm{1}_Q w^\intercal = ( \sum_{q=1}^Q w_q )^2 = 1$). That is to say, when these estimators are approximately perfectly correlated, there is no efficiency loss by using the MSJ estimator. 

In some cases, it may not be very easy to find the analytical form of the functional $g_{\gamma}^{\,\prime}(\theta_0,\gamma_0,\cdot)$ or its variance. Hence, it may not always be possible to estimate $\Sigma_g^w$ directly. In such cases, one can use the following algorithm to estimate the asymptotic variance $\Sigma_\theta^w$.

\begin{algorithm}[Bootstrap variance estimator] The procedure consists of the following steps: (1) Draw a bootstrap sample $\{z_i^\ast\}_{i=1}^n$ and calculate $\hattheta_n^{w \ast}$. (2) Repeat Step (1) a large number of times, say $P$, and get $\{\hattheta_{n,p}^{w \ast} \}_{p=1}^P$. (3) Compute $\Sigma_\theta^{w \ast}$ as the sample variance-covariance of $\{ \hattheta_{n,p}^{w \ast} \}_{p=1}^P$.
\end{algorithm}

\begin{theorem}[Bootstrap variance] \label{thm:jackknifeVar}
Suppose that the assumptions of Theorem \ref{thm:jackknifeCLT} hold true. In addition, assume that $g^\ast \equiv g$, $g_{\gamma}^{\,\ast\prime} \equiv g_{\gamma}^{\,\prime}$, and both $g(\theta,\gamma)$ and $g_{\gamma}^{\,\prime}(\theta,\gamma,\cdot)$ are Lipschitz continuous with respect to $\theta$ and $\gamma$ in a neighborhood of $(\theta_0,\gamma_0)$. Then $\Sigma_\theta^{w\ast} \conP \Sigma_\theta^w$.
\end{theorem}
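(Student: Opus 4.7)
The plan is to prove the result in three stages: (i) derive a bootstrap conditional central limit theorem for $\sqrt{n}(\hattheta_n^{w\ast} - \hattheta_n^w)$; (ii) upgrade the conditional weak convergence to convergence of the conditional second moment; and (iii) absorb the Monte Carlo error as $P \to \infty$. Throughout, $\bbE^\ast$ and $\Var^\ast$ denote conditional expectation and variance given the original sample $\{z_i\}_{i=1}^n$.

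For stage (i), the hypothesis $g^\ast \equiv g$ and $g_\gamma^{\,\ast\prime} \equiv g_\gamma^{\,\prime}$, combined with the Lipschitz continuity of $g$ and $g_\gamma^{\,\prime}$ in $(\theta,\gamma)$ near $(\theta_0,\gamma_0)$, allows each of Assumptions \ref{asmp:AL}--\ref{asmp:BO} and Lemma \ref{lem:op1} to be transferred from the original empirical measure $\bbP_n$ to the bootstrap empirical measure $\bbP_n^\ast$: the same quadratic expansion applies around $(\hattheta_n, \hatgamma_n)$, the leading stochastic term is again an (approximate) U-statistic in the bootstrap sample, and the multi-scale weights $\{w_q\}$---being deterministic---cancel the bootstrap analogs $\widetilde{\calB}_{n,1}^{\texttt{ANB}\ast}$, $\widetilde{\calB}_{n,2}^{\texttt{ANB}\ast}$ and $\widetilde{\calB}_n^{\texttt{NL}\ast}$ by exactly the algebra used in Theorem \ref{thm:jackknifeCLT}. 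The Lipschitz hypothesis is what permits recentering from $(\theta_0,\gamma_0)$ to $(\hattheta_n,\hatgamma_n)$ at negligible cost, since $\hattheta_n - \theta_0 = \oP(1)$ and $\| \hatgamma_n - \gamma_0 \| = \oP(1)$. Standard bootstrap theory for U-statistics then yields, in probability,
\begin{align*}
	\sqrt{n}(\hattheta_n^{w\ast} - \hattheta_n^w) \conLstar \calN(0, \Sigma_\theta^w).
\end{align*}

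Stage (ii) is the principal obstacle, since conditional weak convergence alone does not yield convergence of $\Var^\ast$. I would write $\sqrt{n}(\hattheta_n^{w\ast} - \hattheta_n^w)$ as a sum, over scales $h_q$, of a bootstrap U-statistic (the linear term) plus a quadratic correction plus $\oPstar(1)$, and then apply a Rosenthal-type moment inequality for bootstrap U-statistics. The Lipschitz bounds reduce the task to finite $(2+\delta)$-moments of $g(z,\theta_0,\gamma_0)$ and of $g_\gamma^{\,\prime}(z_1,\theta_0,\gamma_0,\psi(z_1,z_2))$, which are mild strengthenings of what already underlies Assumption \ref{asmp:AN}. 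This delivers $\bbE^\ast[ n \| \hattheta_n^{w\ast} - \hattheta_n^w \|^{2+\delta} ] = \OP(1)$ for some $\delta > 0$, hence conditional uniform integrability of $n\|\hattheta_n^{w\ast} - \hattheta_n^w\|^2$, and therefore $n\Var^\ast[\hattheta_n^{w\ast}] \conP \Sigma_\theta^w$ by Vitali's theorem. Stage (iii) is then routine: conditional on $\{z_i\}$, the replications $\{\hattheta_{n,p}^{w\ast}\}_{p=1}^P$ are \text{i.i.d.}\ with finite conditional second moment, so the conditional strong law of large numbers gives that the sample variance-covariance (with the $\sqrt{n}$-scaling implicit in the definition of $\Sigma_\theta^{w\ast}$) differs from $n\Var^\ast[\hattheta_n^{w\ast}]$ by $o(1)$ almost surely as $P \to \infty$. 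Chaining stages (i)--(iii) delivers $\Sigma_\theta^{w\ast} \conP \Sigma_\theta^w$; the hard step is the uniform-integrability bound in stage (ii), where the Lipschitz assumptions on $g$ and $g_\gamma^{\,\prime}$ do the essential work.
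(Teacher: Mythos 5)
Your proposal is correct in outline and shares its first step with the paper---the bootstrap linearization around $(\hattheta_n,\hatgamma_n)$ using $g^\ast\equiv g$, $g_\gamma^{\,\ast\prime}\equiv g_\gamma^{\,\prime}$, with the bootstrap bias terms $\widetilde{\calB}_{n,1}^{\texttt{ANB}\ast}$, $\widetilde{\calB}_{n,2}^{\texttt{ANB}\ast}$, $\widetilde{\calB}_{n}^{\texttt{NL}\ast}$ killed by the same weighting algebra as in Theorem \ref{thm:jackknifeCLT}---but it then takes a genuinely different route. The paper never passes through a bootstrap CLT: it computes the conditional variance of the linearized statistic directly, noting that under $\bbP_n^\ast$ this is (up to negligible terms) the sample variance $\Sigma_g^w(\hattheta_n,\hatgamma_n)$ of the plug-in influence terms $\big\{\sum_{q=1}^Q w_q\, g\big(z_i,\hattheta_n,\hatgamma_n(h_q)\big) + g_\gamma^{\,\prime}\big(z_i,\hattheta_n,\hatgamma_n,\hatgamma_n^w-\bargamma_n^w\big)\big\}_{i=1}^n$, and then invokes the Lipschitz hypothesis---whose principal role in the paper is precisely this consistency of the plug-in sample variance, rather than the recentering role you assign it---together with a law of large numbers to conclude $\Sigma_g^w(\hattheta_n,\hatgamma_n)\conP\Sigma_g^w$, and finally $\calJ_0^\ast\conP\calJ_0$. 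You instead establish $\sqrt{n}(\hattheta_n^{w\ast}-\hattheta_n^w)\conLstar\calN(0,\Sigma_\theta^w)$ and upgrade conditional weak convergence to convergence of $\Var^\ast$ via Rosenthal-type bounds and Vitali, and you also handle the $P\to\infty$ Monte Carlo step explicitly, which the paper omits. What each buys: your argument is more careful about the distinction between conditional weak convergence and convergence of conditional variances---a point the paper glosses with ``should converge in probability,'' and which in the paper's own proof resurfaces as the unaddressed question of whether the $\oPstar(n^{-1/2})$ remainder is negligible in second moment---but it purchases this rigor with $(2+\delta)$-moment conditions on $g(z,\theta_0,\gamma_0)$ and $g_\gamma^{\,\prime}(z_1,\theta_0,\gamma_0,\psi(z_1,z_2))$ that are not among the theorem's stated hypotheses (as you acknowledge); the paper's plug-in-variance route stays formally within the stated assumptions and is shorter, at the cost of leaving the same moment-control issue implicit.
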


Since the \textquotedblleft singularity bias\textquotedblright{} can always be removed together with the nonlinear bias, the bootstrap estimator $\hattheta_n^{w \ast}$ will have no such bias, even if the re-sampled data may include several replicates of the same observation.

If certain bias(es) is/are root-$n$ negligible, then some of the requirements in Condition \eqref{eq:kernelwq} will not be binding, which can then be simplified. For instance, if the smoothing bias is root-$n$ negligible, i.e., $h_q^m = o(n^{-1/2})$ for $q=1,2$, then we only need
\begin{align*}
    \sum_{q=1}^2 w_q = 1 \quad \text{and} \quad \sum_{q=1}^2 \frac{w_q}{n h_q^{d_z}} = o(n^{-1/2}).
\end{align*}
On the other hand, if the nonlinear bias and the \textquotedblleft singularity bias\textquotedblright{} are root-$n$ negligible, i.e., $h_q^{-d_z} = o(n^{1/2})$ for $q=1,2$, then we only need
\begin{align*}
    \sum_{q=1}^2 w_q = 1 \quad \text{and} \quad \sum_{q=1}^2 w_q h_q^m = o(n^{-1/2}).
\end{align*}
In these two cases, the two-scale jackknife (2SJ) estimators are asymptotically normal with a root-$n$ rate.

\subsection{Analytical bias correction} \label{subsec:analytical}


The analytical bias correction method requires more assumptions on the semiparametric model. The idea is to introduce some sufficient conditions so that we can construct consistent estimators of the average nonparametric bias $\calB^{\texttt{ANB}}$ and the nonlinear bias $\calB^{\texttt{NL}}$. 

Suppose that the functional $g$ is twice Fr\'{e}chet differentiable with respect to $\gamma$ around $\gamma_0$. Consider the general case where $\gamma$ is a matrix-valued function, with the row and column numbers being $r_\gamma$ and $c_\gamma$, respectively. Define the following matrix representation of the partial derivative \citep{Kollo&vonRosen:2006}:
\begin{align*}
    \Big( \frac{\partial}{\partial \vect(\gamma)} \Big)^\intercal = \frac{\partial}{\partial [\vect(\gamma)]^\intercal} = \Big( \frac{\partial}{\partial \gamma_{11}}, \cdots, \frac{\partial}{\partial \gamma_{r_\gamma 1}}, \cdots, \frac{\partial}{\partial \gamma_{1 c_\gamma}}, \cdots, \frac{\partial}{\partial \gamma_{r_\gamma c_\gamma}} \Big).
\end{align*}
Let $\bbD_\gamma g= \frac{\partial g}{\partial [\vect(\gamma)]^\intercal}$ and $\bbD_{\gamma\gamma}^2 g =  \frac{\partial}{\partial \vect(\gamma)} \otimes \frac{\partial g}{\partial [\vect(\gamma)]^\intercal}$. Assume that 
\begin{gather*}
    g_{\gamma}^{\,\prime} (z,\theta_0,\gamma_0, \gamma - \gamma_0) = \bbD_{\gamma} g(z,\theta_0,\gamma_0) \, \vect\big( \gamma(z) - \gamma_0(z) \big), \\
    g_{\gamma\gamma}^{\,\prime\prime} (z,\theta_0,\gamma_0, \gamma - \gamma_0) = \big[ \vect\big( \gamma(z) - \gamma_0(z)  \big)^{\otimes 2} \otimes I_{d_g} \big]^\intercal \, \vect\big( \bbD_{\gamma\gamma}^2 g(z,\theta_0,\gamma_0) \big). 
\end{gather*}
Under these assumptions, the two biases can be written as 
\begin{gather*}
    \Banb = \frac{1}{n} \sum_{i=1}^n \bbD_{\gamma} g(z_i,\theta_0,\gamma_0) \, \vect\big( \bargamma_n(z_i) - \gamma_0(z_i) \big), \\
    \Bnl = \frac{1}{n} \sum_{i=1}^n \big[ \vect\big( \hatgamma_n(z_i) - \bargamma_n (z_i)  \big)^{\otimes 2} \otimes I_{d_g} \big]^\intercal \, \vect\big( \bbD_{\gamma\gamma}^2 g(z_i,\theta_0,\gamma_0) \big).
\end{gather*}

Suppose that $n^r \vect\big( \hatgamma_n(x) - \bargamma_n(x) \big) \conL \calN(0, V(x))$ for any $x\in\bbR^{d_z}$. Then, when Assumption \ref{asmp:BO} holds true, we would expect that $\Bnl - \calB^{\texttt{NL}} = \oP(n^{-1/2})$ with the following $\calB^{\texttt{NL}}$:
\begin{align*}
    \calB^{\texttt{NL}} \coloneqq \bbE\Big( \big[ \vect\big( V(z) \big) \otimes I_{d_g} \big]^\intercal \,  \vect\big( \bbD_{\gamma\gamma}^2 g(z,\theta_0,\gamma_0) \big) \Big).
\end{align*}
Suppose that we have a consistent estimator $\hatV_n(\cdot)$ of the asymptotic variance $V(\cdot)$. It then follows that we can estimate $\calB^{\texttt{NL}}$ by 
\begin{align} \label{eq:hatBnl}
    \whatBnl = \frac{1}{n^{1+2r}} \sum_{i=1}^n \big[ \vect\big( \hatV_n(z_i) \big) \otimes I_{d_g} \big]^\intercal \,  \vect\big( \bbD_{\gamma\gamma}^2 g(z_i,\hattheta_n,\hatgamma_n) \big).
\end{align}

On the other hand, suppose that there exists a (point-wise) consistent estimator $\hat{\bargamma}_n$ of $\bargamma_n$. Then we can estimate $\calB^{\texttt{ANB}}$ by
\begin{align} \label{eq:hatBanb}
    \whatBanb = \whatG_{n,\gamma}^{\,\prime}(\hattheta_n, \hatgamma_n, \hat{\bargamma}_n - \hatgamma_n) =  \frac{1}{n} \sum_{i=1}^n \bbD_{\gamma} g(z_i,\hattheta_n,\hatgamma_n) \, \vect\big( \hat{\bargamma}_n(z_i) - \hatgamma_n(z_i) \big).
\end{align}
For simplicity, we assume that there is no \textquotedblleft singularity bias\textquotedblright{} in $\Banb$, since it can be easily removed using the methods discussed in Section \ref{subsec:VU}.


\begin{assumption} \label{asmp:analytical}
Suppose that Assumption \ref{asmp:BO} holds with real numbers $r$ and $s$. Assume that the functional $g$ is twice Fr\'{e}chet differentiable with respect to $\gamma$ around $\gamma_0$, with $\bbE\big( \big\| \bbD_{\gamma\gamma}^2 g(z,\theta_0,\gamma_0) \big\|^2 \big) < \infty$ and
\begin{align*}
    \bbE\big( \| \bbD_{\gamma} g(z,\theta_0,\gamma_0) - \bbD_{\gamma} g(z,\hattheta_n,\hatgamma_n) \|^2 \big) = O( n^{-2(r\wedge s)} ),
\end{align*}
for sufficiently large $n$. 

Moreover, there exist $\whatV_n$ and $\hat{\bargamma}_n$ such that $\hat{\bargamma}_n - \bargamma_n \conP 0$, $\bbE\big( \| \hat{\bargamma}_n(z) - \hatgamma_n(z) \|^2 \big) = o(n^{-2t}),$ and
\begin{align*}
    \bbE\big( \big\| n^{2r} \vect\big( \hatgamma_n(z) - \bargamma_n (z)  \big)^{\otimes 2} - \vect\big( \hatV_n(z) \big) \big\|^2 \big) = o(n^{-2v}), 
\end{align*}
where $t$ and $v$ are some positive real numbers.
\end{assumption}

Assumption \ref{asmp:analytical} is a strengthened version of the combination of Assumptions \ref{asmp:quadraticity} and \ref{asmp:BO}. The twice Fr\'{e}chet differentiable condition implies the quadratic approximation in Assumption \ref{asmp:quadraticity}, with a more detailed structure on the first- and second-order derivatives. In addition, Assumption \ref{asmp:analytical} also imposes certain conditions on the estimators of $V$ and $\bargamma_n$ in Assumption \ref{asmp:BO}.

\begin{theorem}[Analytical bias correction] \label{thm:analytical}
Suppose that Assumptions \ref{asmp:AL} and \ref{asmp:analytical} hold true. Define $\bar{\bargamma}_n(z_i) \coloneqq \bbE[ \hat{\bargamma}_n(z_i) | z_i ]$. Assume that $s>1/4$, $r>1/6$, $t+r\wedge s > 1/2$, $v+2r>1/2$, and
\begin{gather}
    \sqrt{n} \Big( \whatG_n(\theta_0,\gamma_0) + \whatG_{n,\gamma}^{\,\prime}(\theta_0,\gamma_0, 2\hatgamma_n - \hat{\bargamma}_n - 2\bargamma_n + \bar{\bargamma}_n ) \Big) \conL \calN(0, \widetilde{\Sigma}_g), \label{eq:analytical-AN} \\
    \whatG_{n,\gamma}^{\,\prime}(\theta_0,\gamma_0, 2\bargamma_n - \bar{\bargamma}_n - \gamma_0) = \oP(n^{-1/2}). \label{eq:analytical-op1}
\end{gather}
Then we have
\begin{align*}
    \sqrt{n} \big( \hattheta_n - \theta_0 - \calJ_n \whatBnl - \calJ_n\whatBanb \big) \conL \calN\big(0, \calJ_0 \, \widetilde{\Sigma}_g \, \calJ_0^\intercal \big).
\end{align*}
where $\whatBnl$ and $\whatBanb$ are given by \label{eq:hatBnl} and \eqref{eq:hatBanb}, respectively.
\end{theorem}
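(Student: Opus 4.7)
The plan is to reduce the theorem, via Assumptions \ref{asmp:AL}, \ref{asmp:quadraticity}, and \ref{asmp:BO} and Lemma \ref{lem:op1}, to the two high-level hypotheses \eqref{eq:analytical-AN} and \eqref{eq:analytical-op1}. Under $s > 1/4$ and $r > 1/6$, Lemma \ref{lem:op1} together with Assumption \ref{asmp:AL} yields
\begin{align*}
    \hattheta_n - \theta_0 - \calJ_n \whatBnl - \calJ_n \whatBanb = \calJ_n \Big[ \whatG_n(\theta_0,\gamma_0) + \whatG_{n,\gamma}^{\,\prime}(\theta_0,\gamma_0,\hatgamma_n - \bargamma_n) + (\Bnl - \whatBnl) + (\Banb - \whatBanb) \Big] + \oP(n^{-1/2}),
\end{align*}
where Assumption \ref{asmp:BO} has been used to swap $\calB^{\texttt{NL}}, \calB^{\texttt{ANB}}$ with $\Bnl, \Banb$. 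Since $\calJ_n \conP \calJ_0$, it remains to show that $\sqrt{n}$ times the bracketed expression converges in distribution to $\calN(0,\widetilde{\Sigma}_g)$.

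To treat the ANB discrepancy, I split
\begin{align*}
    \Banb - \whatBanb = \frac{1}{n}\sum_{i=1}^n \bbD_\gamma g(z_i,\theta_0,\gamma_0) \vect\big( \bargamma_n(z_i) - \gamma_0(z_i) - \hat{\bargamma}_n(z_i) + \hatgamma_n(z_i) \big) + R_n^{\texttt{ANB}},
\end{align*}
where the derivative-mismatch remainder $R_n^{\texttt{ANB}}$ pairs $\bbD_\gamma g(\cdot,\theta_0,\gamma_0) - \bbD_\gamma g(\cdot,\hattheta_n,\hatgamma_n)$ with $\hat{\bargamma}_n - \hatgamma_n$. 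Cauchy-Schwarz together with the rates $O(n^{-2(r\wedge s)})$ and $o(n^{-2t})$ supplied by Assumption \ref{asmp:analytical} give $R_n^{\texttt{ANB}} = \oP(n^{-1/2})$ under $t + r\wedge s > 1/2$. The algebraic identity
\begin{align*}
    \bargamma_n - \gamma_0 - \hat{\bargamma}_n + \hatgamma_n = (\hatgamma_n - \bargamma_n) - (\hat{\bargamma}_n - \bar{\bargamma}_n) + (2\bargamma_n - \bar{\bargamma}_n - \gamma_0)
\end{align*}
then rewrites the main ANB term as $\whatG_{n,\gamma}^{\,\prime}\big(\theta_0,\gamma_0, (\hatgamma_n - \bargamma_n) - (\hat{\bargamma}_n - \bar{\bargamma}_n)\big) + \whatG_{n,\gamma}^{\,\prime}(\theta_0,\gamma_0, 2\bargamma_n - \bar{\bargamma}_n - \gamma_0)$, whose second summand is $\oP(n^{-1/2})$ by \eqref{eq:analytical-op1}. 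Adding the first summand to $\whatG_{n,\gamma}^{\,\prime}(\theta_0,\gamma_0,\hatgamma_n - \bargamma_n)$ collapses the relevant part of the bracketed combination into precisely the linearization $\whatG_{n,\gamma}^{\,\prime}(\theta_0,\gamma_0, 2\hatgamma_n - \hat{\bargamma}_n - 2\bargamma_n + \bar{\bargamma}_n)$ appearing in \eqref{eq:analytical-AN}.

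For the nonlinear piece, I decompose
\begin{align*}
    \Bnl - \whatBnl = \frac{1}{n}\sum_{i=1}^n \Big[ \vect\big( (\hatgamma_n(z_i) - \bargamma_n(z_i))^{\otimes 2} - n^{-2r}\hatV_n(z_i) \big) \otimes I_{d_g} \Big]^\intercal \vect\big( \bbD_{\gamma\gamma}^2 g(z_i,\theta_0,\gamma_0) \big) + R_n^{\texttt{NL}},
\end{align*}
where $R_n^{\texttt{NL}}$ collects the effect of evaluating $\bbD_{\gamma\gamma}^2 g$ at $(\hattheta_n,\hatgamma_n)$ rather than $(\theta_0,\gamma_0)$. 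The displayed term is bounded via Cauchy-Schwarz by the product of the $L^2$-norm of $\bbD_{\gamma\gamma}^2 g(z,\theta_0,\gamma_0)$ (finite by Assumption \ref{asmp:analytical}) and $(\bbE\|(\hatgamma_n - \bargamma_n)^{\otimes 2} - n^{-2r}\hatV_n\|^2)^{1/2} = \OP(n^{-2r - v})$ from Assumption \ref{asmp:analytical}, which is $\oP(n^{-1/2})$ since $v + 2r > 1/2$. Handling $R_n^{\texttt{NL}}$ is the main obstacle: it has size $\OP(n^{-2r})$ times a derivative-mismatch factor that only tends to zero in probability under the bare twice Fr\'{e}chet differentiability of $g$. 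To reach $\oP(n^{-1/2})$ when only $r > 1/6$, one needs to propagate the consistency rate $O(n^{-(r\wedge s)})$ of $(\hattheta_n,\hatgamma_n)$ into $\bbD_{\gamma\gamma}^2 g$ via a Lipschitz-type extension of the continuity hypothesis imposed on $\bbD_\gamma g$, so that $R_n^{\texttt{NL}} = \OP(n^{-2r - (r\wedge s)}) = \oP(n^{-1/2})$ using $r > 1/6$ and $r\wedge s > 1/6$.

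Combining the two steps, the bracketed quantity in the first display equals $\whatG_n(\theta_0,\gamma_0) + \whatG_{n,\gamma}^{\,\prime}(\theta_0,\gamma_0, 2\hatgamma_n - \hat{\bargamma}_n - 2\bargamma_n + \bar{\bargamma}_n) + \oP(n^{-1/2})$. Multiplying by $\sqrt{n}$ and applying \eqref{eq:analytical-AN} delivers convergence to $\calN(0,\widetilde{\Sigma}_g)$; Slutsky's theorem together with $\calJ_n \conP \calJ_0$ then produces the asserted limit $\calN(0, \calJ_0 \widetilde{\Sigma}_g \calJ_0^\intercal)$.
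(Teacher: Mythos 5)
Your argument follows essentially the same route as the paper's proof: the same reduction via Assumption \ref{asmp:AL}, Lemma \ref{lem:op1}, and Assumption \ref{asmp:BO}; the same algebraic regrouping that turns $\whatG_{n,\gamma}^{\,\prime}(\theta_0,\gamma_0,\hatgamma_n-\bargamma_n)+\Banb-\whatBanb$ into the twicing-type linearization $\whatG_{n,\gamma}^{\,\prime}(\theta_0,\gamma_0,2\hatgamma_n-\hat{\bargamma}_n-2\bargamma_n+\bar{\bargamma}_n)$ plus the remainder killed by \eqref{eq:analytical-op1} plus a first-derivative mismatch handled by Cauchy--Schwarz under $t+r\wedge s>1/2$; and the same Cauchy--Schwarz bound with $v+2r>1/2$ for the nonlinear piece, finishing with \eqref{eq:analytical-AN} and Slutsky.

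The one genuine point of divergence is your term $R_n^{\texttt{NL}}$, and there you are right to flag it: since $\whatBnl$ in \eqref{eq:hatBnl} evaluates $\bbD_{\gamma\gamma}^2 g$ at $(\hattheta_n,\hatgamma_n)$ while $\Bnl$ involves $\bbD_{\gamma\gamma}^2 g(z_i,\theta_0,\gamma_0)$, the difference $\Bnl-\whatBnl$ is not exactly the single displayed sum; the paper's own proof writes it as if both second derivatives were evaluated at $(\theta_0,\gamma_0)$ and thus silently drops this mismatch, which is not covered by Assumption \ref{asmp:analytical} (that assumption only controls the mismatch of $\bbD_{\gamma} g$ and the second moment of $\bbD_{\gamma\gamma}^2 g(z,\theta_0,\gamma_0)$). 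Your proposed Lipschitz-type strengthening on $\bbD_{\gamma\gamma}^2 g$, giving $R_n^{\texttt{NL}}=\OP(n^{-2r-(r\wedge s)})=\oP(n^{-1/2})$ under $r>1/6$, $s>1/4$, is a legitimate way to close this; so your write-up is a slightly more careful version of the paper's argument rather than a different one, and the only caveat is that this extra smoothness condition must be stated explicitly since it does not follow from the theorem's hypotheses as written.
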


A possible choice for $\hat{\bargamma}_n$ is $\hatgamma_n$, which then yields $\bar{\bargamma}_n \equiv \bargamma_n$. In this case, condition \ref{eq:analytical-AN} reduces to Assumption \ref{asmp:AN}. Condition \ref{eq:analytical-op1} is then equivalent to $\Banb = \oP(n^{-1/2})$. That is to say, when we couldn't estimate $\Banb$, we can obtain an analytical-based inference only if $\Banb$ is root-$n$ negligible.

In some cases, it is possible to have an estimator $\hat{\bargamma}_n$ different from $\hatgamma_n$. Then Condition \ref{eq:analytical-op1} requires that this estimator can reduce the average nonparametric bias to the extent that the remaining bias becomes root-$n$ negligible. 
Conditions \ref{eq:analytical-op1} and \ref{eq:analytical-AN} together imply that
\begin{align*}
    \whatG_n(\theta_0,\gamma_0) + \whatG_{n,\gamma}^{\,\prime}(\theta_0,\gamma_0, 2\hatgamma_n - \hat{\bargamma}_n - \gamma_0 ) \conL \calN(0, \widetilde{\Sigma}_g).
\end{align*}
That is, the asymptotic variance is determined by the updated estimator $2\hatgamma_n - \hat{\bargamma}_n$. We expect that, in most cases, the left hand side can be written as a U-statistic. Then the above asymptotic normality result shall be satisfied under very general conditions.

\begin{named-exmp}[Kernel density estimator continued]
Let $\hatgamma_n$ be the \textquotedblleft leave-one-out\textquotedblright{} kernel density estimator. In this case, $V(x) = \gamma_0(x) \int K^2(u) du$, which can be easily estimated. Recall that $\bargamma_n(\cdot) = \int K(u)  \gamma_0(\cdot - hu) du$. It then follows that
\begin{align*}
    \hat{\bargamma}_n(\cdot) = \int K(u) \hatgamma_n(\cdot - hu) du, \,\, \bar{\bargamma}_n(\cdot) = \int\int K(u) K(v) \gamma_0(\cdot - hu - hv) du dv.
\end{align*}
The updated estimator becomes
\begin{align*}
    & 2\hatgamma_n(z_i) - \hat{\bargamma}_n(z_i) = \frac{1}{n-1} \sum_{j\neq i} \Big( 2 K_h( z_i - z_j ) - \int K_h(z_i - x) K_h(x - z_j) dx \Big) \\
        =\,& \frac{1}{n-1} \sum_{j\neq i} \Big( 2 K_h( z_i - z_j ) - \int K_h(z_i - z_j - y) K_h(y) dy \Big) = \frac{1}{n-1} \sum_{j\neq i} \tilK_h(z_i - z_j),
\end{align*}
where $\tilK_h(u) = \frac{1}{h^{d_z}} \tilK(u/h)$ and $\tilK(u) = 2 K(u) - \int K(u-v) K(v) dv$ is the twicing kernel studied by \citep{Stuetzle&Mittal:1979} and \citep{NHR:2004}.

According to \citep{NHR:2004}, the twicing kernel enjoys a small bias property, which makes Condition \eqref{eq:analytical-op1} less stringent than requiring that $\Banb$ is root-$n$ negligible. For instance, if $\gamma_0$ is at least $2m$ times differentiable and the order of $K$ is $m$, then $\whatG_{n,\gamma}^{\,\prime}(\theta_0,\gamma_0, 2\bargamma_n - \bar{\bargamma}_n - \gamma_0) = \OP(h^{2m}) = \OP(n^{-2\kappa m})$. Hence, Condition \eqref{eq:analytical-op1} only requires $\kappa > 1/(4m)$ (cf. $\kappa > 1/(2m)$ for $\Banb$ to be root-$n$ negligible). If Condition (2.4) in \citep{NHR:2004} is satisfied with some function $\nu$, then the requirement that $\gamma_0$ is at least $2m$ times differentiable can be replaced by both $\nu$ and $\gamma_0$ are at least $m$ times differentiable.
\end{named-exmp}


The limitation of the analytical bias correction method is that it requires explicit expressions of $\bbD_{\gamma} g$, which is the influence function (refer to \citep{Ichimura&Newey:2017} for more discussions on the calculation of the influence function), and $\bbD_{\gamma\gamma}^2 g$. In some cases, it can be very challenging to compute these derivatives. However, when they are available in analytical forms, the computation cost is lower than the multi-scale jackknife method, for that one only needs to conduct the estimation with one bandwidth.

\subsection{Extension to discontinuous functionals} \label{subsec:discontinuous}

In many applications, the semiparametric estimator is a sample average of some discontinuous functional of the first-step nonparametric estimator. In this subsection, we are going to demonstrate that our framework can be extended to such case if there exists a sufficiently smooth projection of the discontinuous functional. 

\begin{assumption}[ALQP---Asymptotic Linearity in $\check{g}$ with a Quadratic Projection] \label{asmp:AL'} 
Assume that the semiparametric estimator $\check{\theta}_n$ is \textit{asymptotically linear} in a discontinuous functional $\check{g}$:
\begin{align*}
	\check{\theta}_n - \theta_0 = \calJ_n \widecheck{G}_n(\theta_0, \hatgamma_n) + \oP(n^{-1/2}) = \frac{1}{n} \sum_{i=1}^n \calJ_n \, \check{g}(z_i, \theta_0, \hatgamma_n) +  \oP(n^{-1/2}),
\end{align*}
where $\calJ_n \conP \calJ_0$ for some non-random and non-zero $\calJ_0$, and the functional $\check{g}$ satisfies that $\widecheck{G}(\theta_0,\gamma_0) = \bbE[\check{g}(z,\theta_0,\gamma_0)]=0$.  

Moreover, there exists a continuous functional $g$ satisfying Assumption \ref{asmp:quadraticity} and $\bbE[ \check{g}(z_i,\theta,\gamma) ] = \bbE[ g(z_i,\theta,\gamma)]$, $\forall i=1,\cdots,n$, in an open set containing $(\theta_0,\gamma_0)$.
\end{assumption}

Intuitively, the functional $g$ is a smooth projection of $\check{g}$ on some sub-$\sigma$-algebra of the $\sigma$-algebra generated by the sample. Let $\hattheta_n$ be the corresponding estimator defined by $g$. Under Assumption \ref{asmp:AL'} and those conditions of Lemma \ref{lem:op1}, we obtain
\begin{align*}
	& \check{\theta}_n - \theta_0 = (\check{\theta}_n - \hattheta_n) + (\hattheta_n - \theta_0) \\
	=\,& \calJ_n \Big( \widecheck{G}_n(\theta_0, \hatgamma_n) - \whatG_n(\theta_0, \hatgamma_n) + \whatG_n(\theta_0, \gamma_0) +  \whatG_{n,\gamma}^{\,\prime}(\theta_0,\gamma_0, \hatgamma_n - \bargamma_n) 
		+ \Banb + \Bnl \Big) + \oP(n^{-1/2}).
\end{align*}
The property of $g$ implies that $\bbE[ \widecheck{G}_n(\theta_0, \hatgamma_n) - \whatG_n(\theta_0, \hatgamma_n) ] = 0$. That is, the difference $\widecheck{G}_n(\theta_0, \hatgamma_n) - \whatG_n(\theta_0, \hatgamma_n)$ does not contain any biases. Intuitively, it is the sample average of the difference between $\check{g}$ and its smooth projection $g$. Hence, it is reasonable to expect that this difference is asymptotically normal, under certain regularity conditions. 

\begin{assumption}[$\text{AN}'$---Asymptotic Normality] \label{asmp:AN'}
Suppose that there exists a non-random and positive definite $\widecheck{\Sigma}_g$ such that
\begin{align*}
	\widecheck{G}_n(\theta_0, \hatgamma_n) - \whatG_n(\theta_0, \hatgamma_n) + \whatG_n(\theta_0, \gamma_0) + \whatG_{n,\gamma}^{\,\prime}(\theta_0,\gamma_0, \hatgamma_n - \bargamma_n)  \conL \mathcal{N}(0, \widecheck{\Sigma}_g).
\end{align*}
\end{assumption}

\begin{named-exmp}[Hit Rates]
Consider the hit rates example discussed by \citep{CLK:2003}. Let $z=(y,x^\intercal)^\intercal$, where $y$ is a scalar dependent variable and $x\in\bbR^{d_x}$ is a continuous covariate with density $\gamma_0$. The parameter of interest is $\theta_0 = \bbE[ \bbOne(y\geq\gamma_0(x) ) ] =\bbE\big[1 - F_{y|x}\big( \gamma_0(x) | x \big)\big]$, where $F_{y|x}$ is the conditional distribution of $y$ given $x$. Consider a kernel-based semiparametric estimator
\begin{align*}
	\check{\theta}_n = \frac{1}{n} \sum_{i=1}^n \bbOne\big(y_i \geq \hatgamma_n(x_i) \big), \quad \hatgamma(x_i) = \frac{1}{n} \sum_{j\neq i} K_h(z_i - z_j).
\end{align*}
Let $\check{g}(z,\theta,\gamma) = \bbOne\big( y \geq \gamma(x) \big) - \theta$ and $g(z,\theta,\gamma) = \bbE[ \check{g}(z,\theta,\gamma) | x ] = 1 - F_{y|x}\big( \gamma(x) | x \big) - \theta$. Let $\calX_n$ be the $\sigma$-algebra generated by $\{x_i\}_{i=1}^n$. Then we have
\begin{align*}
	\widecheck{G}_n(\theta_0, \hatgamma_n) - \whatG_n(\theta_0, \hatgamma_n) = \frac{1}{n} \sum_{i=1}^{n} \Big( \bbOne\big( y_i \geq \hatgamma_n(x) \big) - 1 + F_{y|x}\big( \hatgamma_n(x_i) | \calX_n \big) \Big).
\end{align*}
The asymptotic normality of the above difference is a direct result of the central limit theory in the \text{i.i.d.} case. If we further know the correlation between this difference and $\whatG_n(\theta_0, \gamma_0) + \whatG_{n,\gamma}^{\,\prime}(\theta_0,\gamma_0, \hatgamma_n - \bargamma_n)$, as well as the variance of the latter, we will be able to find $\widecheck{\Sigma}_g$.
\end{named-exmp}

\begin{theorem}[A Summary Theorem for $\check{\theta}_n$] \label{thm:discontinuous}
(i) Suppose that Assumptions \ref{asmp:BO}, \ref{asmp:AL'}, and \ref{asmp:AN'} hold true. Assume that $\calJ_n - \calJ_0 = \OP\big( \whatG_n(\theta_0, \hatgamma_n) \big)$. If $s>1/4$ and $r>1/6$, then we have
\begin{align*}
	\sqrt{n} \big( \check{\theta}_n - \theta_0 - \calJ_n \calB^{\texttt{NL}} - \calJ_n \calB^{\texttt{ANB}} \big) \conL \calN(0, \calJ_0 \, \widecheck{\Sigma}_g \, \calJ_0^\intercal).
\end{align*} 


(ii) The assumptions of part (i) and Theorem \ref{thm:jackknifeCLT} are all true. Then $\sqrt{n}( \check{\theta}_n^w - \theta_0 ) \conL \calN(0, \widecheck{\Sigma}_\theta^w)$ with $\widecheck{\Sigma}_\theta^w \coloneqq \calJ_0 \, \widecheck{\Sigma}_g^w \calJ_0^\intercal$, where $\widecheck{\Sigma}_g^w$ is the asymptotic variance of 
\begin{align*}
	\sqrt{n} \Big( \sum_{q=1}^Q w_q \big( \widecheck{G}_n(\theta_0, \hatgamma_n(h_q)) - \whatG_n(\theta_0, \hatgamma_n(h_q)) \big) + \whatG_n(\theta_0, \gamma_0) + \whatG_{n,\gamma}^{\,\prime}(\theta_0,\gamma_0, \hatgamma_n^w - \bargamma_n^w) \Big).
\end{align*}
\end{theorem}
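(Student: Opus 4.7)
The plan is to handle the two parts separately, using the same decomposition that appears in the paragraph preceding the theorem as the backbone.

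For part (i), I would start from Assumption \ref{asmp:AL'} to write $\check{\theta}_n - \theta_0 = \calJ_n \widecheck{G}_n(\theta_0,\hatgamma_n) + \oP(n^{-1/2})$, then insert the identity $\widecheck{G}_n(\theta_0,\hatgamma_n) = [\widecheck{G}_n(\theta_0,\hatgamma_n) - \whatG_n(\theta_0,\hatgamma_n)] + \whatG_n(\theta_0,\hatgamma_n)$. Since $g$ satisfies Assumption \ref{asmp:quadraticity} and Assumption \ref{asmp:BO} holds with $s>1/4$ and $r>1/6$, Lemma \ref{lem:op1}(iii) gives the expansion
\begin{align*}
    \whatG_n(\theta_0,\hatgamma_n) = \whatG_n(\theta_0,\gamma_0) + \whatG_{n,\gamma}^{\,\prime}(\theta_0,\gamma_0,\hatgamma_n - \bargamma_n) + \calB^{\texttt{NL}} + \calB^{\texttt{ANB}} + \oP(n^{-1/2}).
\end{align*}
Combining these and using $\calJ_n = \OP(1)$ to absorb the remainder, I obtain
\begin{align*}
    \check{\theta}_n - \theta_0 - \calJ_n\calB^{\texttt{NL}} - \calJ_n\calB^{\texttt{ANB}} = \calJ_n\Big[ (\widecheck{G}_n - \whatG_n)(\theta_0,\hatgamma_n) + \whatG_n(\theta_0,\gamma_0) + \whatG_{n,\gamma}^{\,\prime}(\theta_0,\gamma_0,\hatgamma_n - \bargamma_n) \Big] + \oP(n^{-1/2}).
\end{align*}
Assumption \ref{asmp:AN'} supplies the $\sqrt{n}$-asymptotic normality of the bracketed term with limit variance $\widecheck{\Sigma}_g$, and since $\calJ_n \conP \calJ_0$, Slutsky's theorem yields the stated limit with variance $\calJ_0\,\widecheck{\Sigma}_g\,\calJ_0^\intercal$.

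For part (ii), I would apply the decomposition of part (i) at every bandwidth $h_q$, producing
\begin{align*}
    \check{\theta}_n(h_q) - \theta_0 - \calJ_n \calB^{\texttt{NL}}(h_q) - \calJ_n \calB^{\texttt{ANB}}(h_q) = \calJ_n\Big[ (\widecheck{G}_n - \whatG_n)(\theta_0,\hatgamma_n(h_q)) + \whatG_n(\theta_0,\gamma_0) + \whatG_{n,\gamma}^{\,\prime}(\theta_0,\gamma_0,\hatgamma_n(h_q) - \bargamma_n(h_q))\Big] + \oP(n^{-1/2}).
\end{align*}
Taking the weighted sum $\sum_q w_q$, using $\sum_q w_q = 1$, and exploiting the linearity of $\whatG_{n,\gamma}^{\,\prime}$ in its third argument to collapse $\sum_q w_q \whatG_{n,\gamma}^{\,\prime}(\theta_0,\gamma_0,\hatgamma_n(h_q) - \bargamma_n(h_q))$ into $\whatG_{n,\gamma}^{\,\prime}(\theta_0,\gamma_0,\hatgamma_n^w - \bargamma_n^w)$, I obtain the same form of decomposition for $\check{\theta}_n^w$. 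The multi-scale weighting conditions inherited from Theorem \ref{thm:jackknifeCLT} (i.e., the generalization of \eqref{eq:kernelwq}) force $\sum_q w_q \calB^{\texttt{NL}}(h_q)$ and $\sum_q w_q \calB^{\texttt{ANB}}(h_q)$ to be $\oP(n^{-1/2})$, so the bias contributions drop out. The remaining term is
\begin{align*}
    \calJ_n \sum_{q=1}^{Q} w_q (\widecheck{G}_n - \whatG_n)(\theta_0,\hatgamma_n(h_q)) + \calJ_n\big[\whatG_n(\theta_0,\gamma_0) + \whatG_{n,\gamma}^{\,\prime}(\theta_0,\gamma_0,\hatgamma_n^w - \bargamma_n^w)\big] + \oP(n^{-1/2}),
\end{align*}
which, together with the $\sqrt{n}$-asymptotic normality hypothesis on the $Q$-scale object stated in the theorem and Slutsky, delivers $\sqrt{n}(\check{\theta}_n^w - \theta_0) \conL \calN(0,\widecheck{\Sigma}_\theta^w)$.

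The main obstacle I anticipate is verifying that the decomposition of $\widecheck{G}_n - \whatG_n$ indeed contributes no bias and can be combined cleanly with the $\whatG_n$ part — this relies crucially on the projection property $\bbE[\check{g}] = \bbE[g]$ of Assumption \ref{asmp:AL'} and is the content of Assumption \ref{asmp:AN'}, so in the written proof I will spend a moment clarifying how these two hypotheses interlock before appealing to Lemma \ref{lem:op1}. A secondary subtlety is that $(\widecheck{G}_n - \whatG_n)(\theta_0,\hatgamma_n)$ involves the random $\hatgamma_n$; this is absorbed into the high-level Assumption \ref{asmp:AN'}, so no separate stochastic equicontinuity argument is required at this step.
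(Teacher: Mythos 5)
Your argument is correct and follows essentially the same route as the paper: part (i) is the decomposition $\widecheck{G}_n(\theta_0,\hatgamma_n) = [\widecheck{G}_n - \whatG_n](\theta_0,\hatgamma_n) + \whatG_n(\theta_0,\hatgamma_n)$ combined with Lemma \ref{lem:op1}, Assumption \ref{asmp:AN'}, $\calJ_n \conP \calJ_0$, and Slutsky, exactly as in the paper's proof, and part (ii) reruns the weighted-sum/bias-cancellation argument of Theorem \ref{thm:jackknifeCLT}, which the paper itself only sketches by reference. The level of detail you give (including treating $\calJ_n$ versus $\calJ_0$ and the vanishing of the weighted biases) matches or slightly exceeds the paper's own treatment, so no gap to report.
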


The counterpart of Theorem \ref{thm:analytical} seems to be more complicated, for that the smooth projection $g$ may be unknown, as shown in the hit rates example. In such case, we also need to account for the errors and biases that arise from the estimation of $g, g_{\gamma}^{\,\prime}$ and $g_{\gamma\gamma}^{\,\prime\prime}$. Hence, we leave this to future exploration.

\section{Simulation Study} \label{sec:simulation}

We have conducted a Monte Carlo experiment to investigate the finite-sample performance of the multi-scale jackknife (MSJ) method and the analytical bias correction (ABC) method. We considered three different estimators: (1) the average density (AD) estimator, (2) the integrated squared density (ISD) estimator, and (3) the density-weighted average derivative (DWAD) estimator.

In the first two cases, we considered a one-dimensional mixed normal density given by
\begin{align*}
    \gamma_0(x) = \alpha \phi(x;\mu_1, \sigma_1^2) + (1-\alpha) \phi(x;\mu_2, \sigma_2^2),
\end{align*}
where $\mu_1 = -2$, $\sigma_1^2 = 0.5$, $\mu_2 = 1$, $\sigma_2^2 = 1$, and $\alpha=0.4$. The true parameter of interest $\theta_0 = \bbE(\gamma_0(X))$ is given by
\begin{align*}
    \theta_0 = \frac{\alpha^2}{\sqrt{4 \sigma_1^2 \pi}} + \frac{(1-\alpha)^2}{\sqrt{4 \sigma_2^2 \pi}} + 2 \frac{\alpha (1-\alpha)}{\sqrt{ 2\pi(\sigma_1^2 + \sigma_2^2) }} \, \exp\Big( - \frac{1}{2} \frac{(\mu_1 - \mu_2)^2}{\sigma_1^2 + \sigma_2^2} \Big) = 0.0796.
\end{align*}

In the last case, we are interested in estimating
\begin{align*}
    \theta_0 = \bbE\big( \gamma_0(X) \, \partial_X \bbE( Y | X) \big) = -2 \, \bbE( \partial_X \gamma_0(X) Y ),
\end{align*}
where $\gamma_0(\cdot)$ is the density of $X$. We considered a linear model
\begin{align*}
    y_i = x_i^\intercal \beta + \epsilon_i, \,\, x_i \sim \calN(0, I_d), \epsilon_i \sim \calN(0,1).
\end{align*}
For simplicity, we let $\beta=1_d$, a $d$-dimensional vector with all  the elements being one, and focus on estimating $\theta_{01}$.

We employed a Gaussian kernel in all cases. So the order of the kernel is $m=2$ across all cases. We considered three different sample sizes: $n=50, 100,$ and $200$. In each case, we conducted 1,000 simulations. To save space, we only report the results with $n=100$. Refer to the online supplement for more results.

\begin{figure}[htb!]
\centering
\includegraphics[width=1\textwidth]{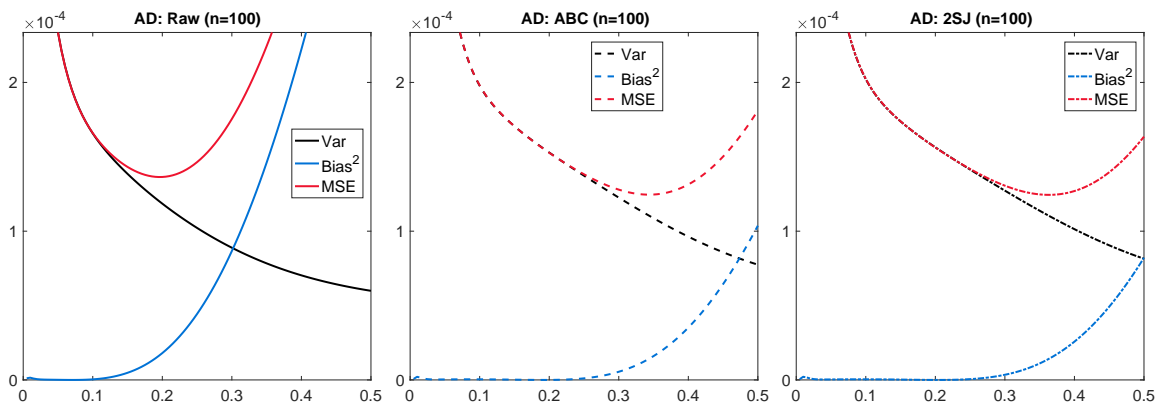}
\caption{AD: Decomposition of Mean Squared Error}
\label{fig:AD100MSE}
\end{figure}

Figure \ref{fig:AD100MSE} shows the decomposition of mean squared error (MSE) for various AD estimators, at different bandwidth values. From left to right, it presents the result for the raw estimator without any bias correction, the analytical bias-corrected (ABC) estimator, and the two-scale jackknife (2SJ) estimator (with $\eta=(1,5/4)$), respectively. 

Since the raw estimator is linear in the kernel function, there is no nonlinear bias $\Bnl$. As shown in the figure, the bias starts to increase with the bandwidth $h$ when $h>0.1$ for the raw estimator. While for the other two estimators, this only occurs approximately when $h>0.25$. In other words, both ABC and 2SJ successfully removed the bias for a substantially large range of bandwidths. For larger values of $h$, although there is still bias left in the ABC and 2SJ estimators, it has been largely reduced. Consequently, the inference based on either ABC or 2SJ will be much less sensitive to the choice of bandwidth. 

\begin{figure}[htb!]
\centering
\includegraphics[width=1\textwidth]{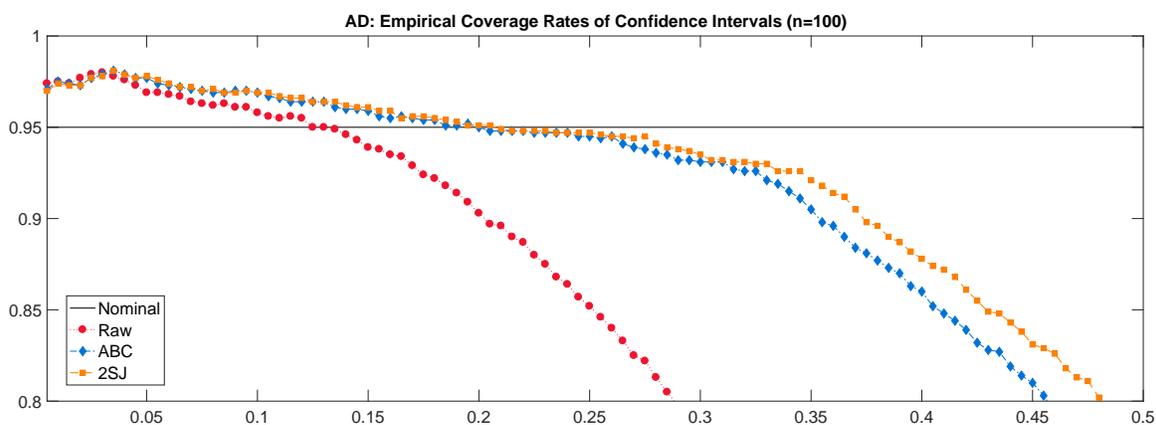}
\caption{AD: Empirical Coverage Rates of Confidence Intervals}
\label{fig:AD100CI}
\end{figure}

For any given bandwidth value, the variance parts of the ABC and 2SJ estimators are larger than that of the raw one. We think these are due to some finite sample effects. As shown by \citep{NHR:2004}, the variance of the twicing-kernel-based semiparametric estimator only depends on the true function(s), not the kernel (cf. the notation following (2.2) therein). This implies that the asymptotic variances of the ABC and the raw estimators should be the same. However, the kernel may have some impacts on the finite-sample variance. While for the 2SJ estimator, it is probably because its two components are not perfectly correlated in such a finite sample. However, the increases are not that large. Hence, the ABC and 2SJ estimators can achieve slightly smaller minimum values for the MSE.

Figure \ref{fig:AD100CI} shows the empirical coverage rates for the 95\% confidence intervals (CIs) associated with the raw, ABC, and 2SJ estimators. The $x$-axis is the bandwidth. The coverage rates are about two percentage points higher than the nominal level when $h$ is small. This might be a result of slightly overestimating the asymptotic variance when $h$ is very small. Not surprisingly, the coverage rates decrease, as bias increases (in absolute value). Since the ABC and 2SJ estimators can remove/reduce bias, their corresponding coverage-rate curves have much slower decreasing rates. More importantly, the curves are nearly flat and very close to the nominal level around the region $[0.2, 0.25]$. According to Figure \ref{fig:AD100MSE}, this is a region where the bias remains very close to zero. Besides, since $h$ is not very small in this region, the variance estimators become more precise, compared to the cases with very small bandwidth values. 

\begin{figure}[htb!]
\centering
\includegraphics[width=1\textwidth]{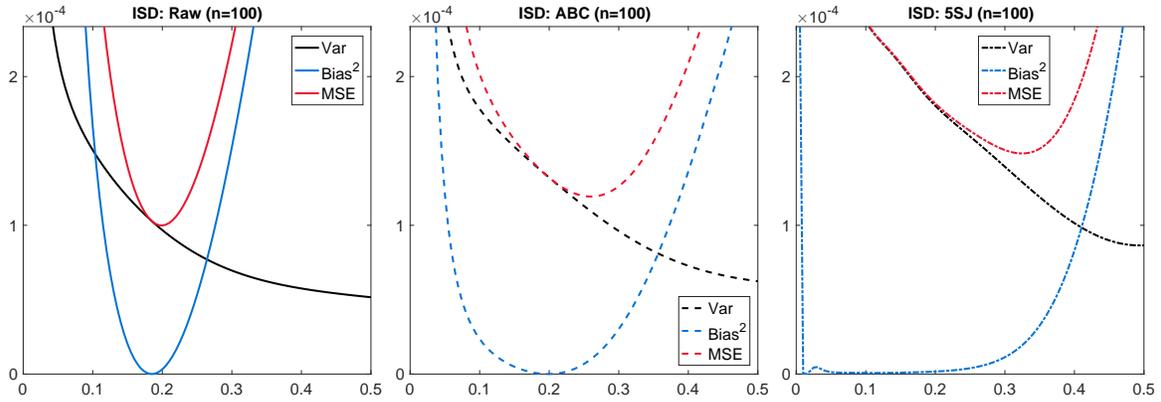}
\caption{ISD: Decomposition of Mean Squared Error}
\label{fig:ISD100MSE}
\end{figure}

\begin{figure}[htb!]
\centering
\includegraphics[width=1\textwidth]{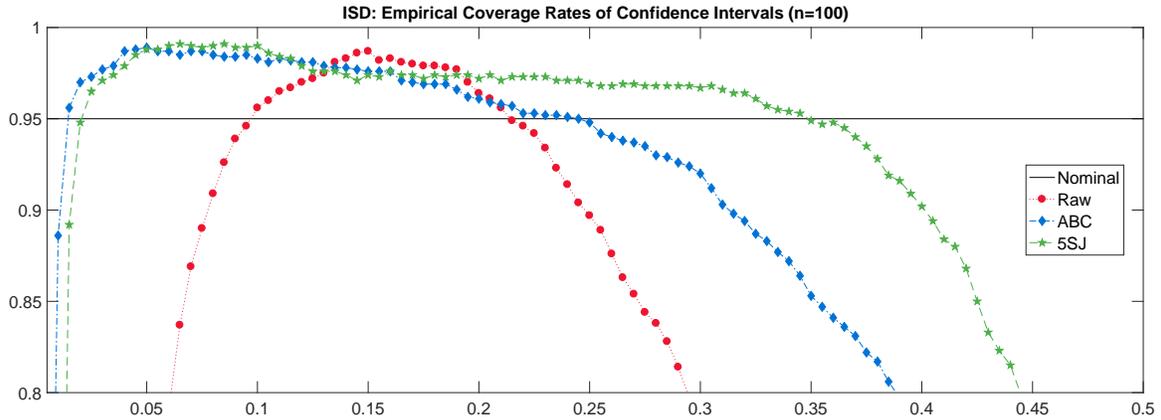}
\caption{ISD: Empirical Coverage Rates of Confidence Intervals}
\label{fig:ISD100CI}
\end{figure}

Figure \ref{fig:ISD100MSE} presents the MSE decomposition results for various ISD estimators. In this case, both the two biases are non-zero. The nonlinear bias $\Bnl$ is positive, while the average nonparametric bias $\Banb$ is negative. This explains why there is a point where the overall bias is zero. Once deviating from this point, the overall bias increases rapidly in magnitude. The ABC method can substantially reduce both biases. One can construct 2SJ to remove/reduce either the nonlinear bias or the average nonparametric bias. However, we found that 3SJ, which is the counterpart to ABC in this scenario, can only effectively remove the nonlinear bias. Hence, we tried higher-scale jackknife and found that 5SJ has a much better performance (we set $\eta=(3/5,4/5,1,6/5,7/5)$). 

According to Figure \ref{fig:ISD100CI}, the coverage rates of the raw estimator are quite sensitive to the bandwidth, which is consistent with the MSE decomposition result. For the ABC and 5SJ estimators, the coverage rates are more robust to the bandwidth, especially in the latter case. This is not surprising, for that 5SJ can remove/reduce more biases by design. Generally speaking, the coverage rates are higher than the nominal level when the overall bias level is relatively small. One possible explanation is that although the true asymptotic variance of the ISD estimator is the same as that of the AD estimator, we employed a more nonlinear estimator, which may be subject to more sources of finite-sample biases, to estimate it in the ISD case.

\begin{figure}[htb!]
\centering
\includegraphics[width=1\textwidth]{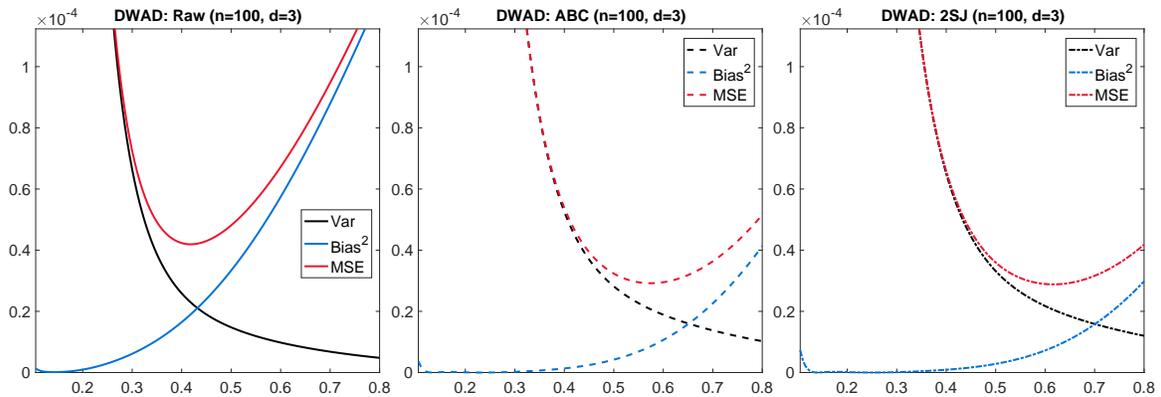}
\caption{DWAD: Decomposition of Mean Squared Error}
\label{fig:DWAD100MSE}
\end{figure}

\begin{figure}[htb!]
\centering
\includegraphics[width=1\textwidth]{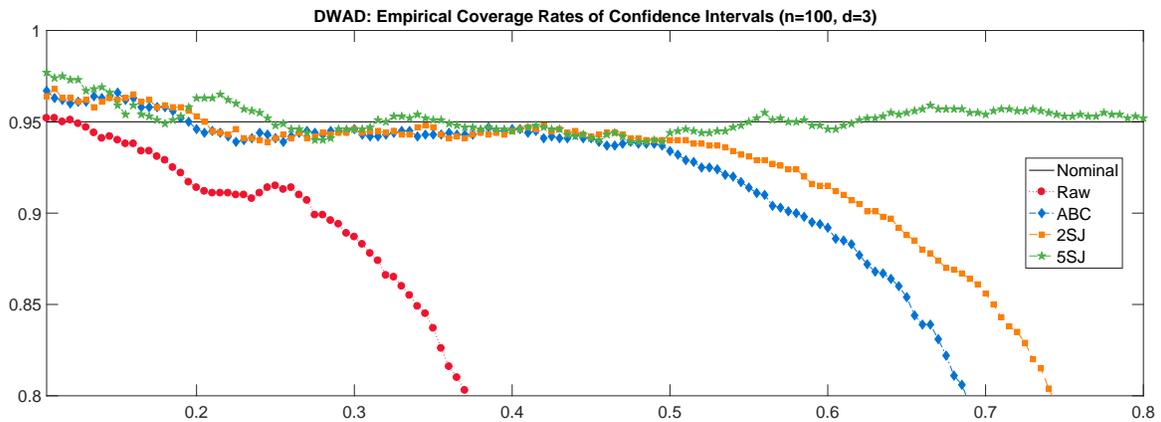}
\caption{DWAD: Empirical Coverage Rates of Confidence Intervals}
\label{fig:DWAD100CI}
\end{figure}

For the DWAD estimator, we present the results with $d=3$, which is larger than the order of the Gaussian kernel ($m=2$). The general patterns are the same as above. In this case, the MSE gains for the ABC and 2SJ estimators are more noticeable. 
When constructing the confidence intervals, we used the variance estimator proposed by \citep{Cattaneo&Crump&Jansson:2014} (Case (b) of Theorem 2 therein), while the one considered by \citep{PSS:1989} leads to over-coverage. The under-coverage of the CI based on the raw estimator is mainly due to the bias. In other cases, the coverage rates are pretty close to the nominal level, when the remaining biases are small. In particular, since the five-scale jackknife estimator successively removes bias for a large range of bandwidth, its CI continues to have good coverage rates across all the bandwidths considered in the simulation.

%

%

\section{Conclusion} \label{sec:conclusion}

This paper extends the classic framework on semiparametric two-step models, which is developed by \citep{Andrews:1994}, \citep{Newey:1994}, and \citep{Newey&McFadden:1994}, to allow for possibly low-precision nonparametric estimator. We have shown that there are two (or even more) different types of biases in the semiparametric estimator, when its nonparametric ingredient has a slower-than-$n^{1/4}$ convergence rate. We also have proposed two different methods to correct for these biases: one is multi-scale jackknife, the other is analytical-based bias correction. Our simulation study suggests that these bias-correction methods work quite well in finite samples for various kernel-based semiparametric two-step estimators.

\bibliography{YangBib}

\begin{appendices}

\bigskip

\tableofcontents

\bigskip
\bigskip

\section{Proofs} \label{sec:proofs}

Throughout this section, we let $C$ to denote some positive finite constant, the value of which may vary from line to line.

\subsection{Sufficient conditions for Assumption \ref{asmp:BO}}

Let $\{z_i\}_{i=1}^n$ be an \text{i.i.d.} sample and $\hatgamma_n$ be an estimator of $\gamma_0$. Define $\bargamma_n(z_i) \coloneq \bbE[\hatgamma_n(z_i) | z_i]$. Suppose that
\begin{align*}
	\hatgamma_n(z_i) - \bargamma_n(z_i) = \frac{1}{n-1} \sum_{j\neq i} \big( \psi(z_i, z_j) - \bbE[ \psi(z_i, z_j) | z_i ] \big) = \frac{1}{n-1} \sum_{j\neq i} \phi(z_i, z_j).
\end{align*}

\begin{lemma}[Sufficient conditions for Assumption \ref{asmp:BO}] \label{lem:BO}
Suppose the following conditions hold for $i\neq j\neq l$ and some finite number $C$:
\begin{align*}
\begin{gathered} 
	\bbE[ \| g_{\gamma}^{\prime}(z_i, \theta_0, \gamma_0, \bargamma_n(z_i) - \gamma_0(z_i) ) \| ] \leq C \, \bbE[ \| \bargamma_n(z_i) - \gamma_0(z_i) \| ] \\
	\Var[ g_{\gamma}^{\prime}(z_i, \theta_0, \gamma_0, \bargamma_n(z_i) - \gamma_0(z_i) ) ] \leq C \, \Var[ \bargamma_n(z_i) - \gamma_0(z_i) ],	\\
	\bbE[ \| g_{\gamma\gamma}^{\prime\prime}(z_i, \theta_0, \gamma_0, \phi(z_i,z_j), \phi(z_i,z_j)) \| ] \leq C \, \bbE[ \| \psi(z_i,z_j) \|^2 ], \\
	\Var\big( \bbE[ g_{\gamma\gamma}^{\prime\prime}(z_i, \theta_0, \gamma_0, \phi(z_i,z_j), \phi(z_i,z_j)) ] \big) \leq C \, \bbE\big( \| \psi(z_i, z_j) \|^4 \big), \\
	\Var\big( \bbE[ g_{\gamma\gamma}^{\prime\prime}(z_i, \theta_0, \gamma_0, \phi(z_i,z_j), \phi(z_i,z_j)) | z_i ] \big) \leq C \, \bbE\big( \bbE[ \| \psi(z_i, z_j) \|^2 | z_i ]^2 \big), \\
	\Var\big( \bbE[ g_{\gamma\gamma}^{\prime\prime}(z_i, \theta_0, \gamma_0, \phi(z_i,z_j), \phi(z_i,z_j)) | z_j ] \big) \leq C \, \bbE\big( \bbE[ \| \psi(z_i, z_j) \|^2 | z_j ]^2 \big), \\
	\Var\big( \bbE[ g_{\gamma\gamma}^{\prime\prime}(z_i, \theta_0, \gamma_0, \phi(z_i,z_j), \phi(z_i,z_l))] \big) \leq C \, \bbE\big[ \| \psi(z_i,z_j) \|^2 \| \psi(z_i, z_l) \|^2 \big], \\
	\Var\big( \bbE[ g_{\gamma\gamma}^{\prime\prime}(z_i, \theta_0, \gamma_0, \phi(z_i,z_j), \phi(z_i,z_l)) | z_j, z_l ] \big) \leq C \, \bbE\big[ \bbE[ \| \psi(z_i,z_j) \psi(z_i, z_l) \| | z_j, z_l]^2 \big],
\end{gathered}
\end{align*}
where the orders of the four right hand side terms depend on some turning parameter. If one can choose the tuning parameter in such a way that 
\begin{align}
\begin{gathered} \label{eq:BOcond2}
	 \bbE[ \| \bargamma_n(z_i) - \gamma_0(z_i) \| ] = O(n^{-s}), \quad \Var[ \bargamma_n(z_i) - \gamma_0(z_i) ] = o(1), \\
	 \bbE[ \| \psi(z_i,z_j) \|^2 ] = O(n^{1-2r}), \quad  \bbE[ \| \psi(z_i,z_j) \|^4 ] = o(n^3), \\
	 \bbE\big( \bbE[\psi(z_i, z_j)^2 | z_j ]^2 \big) = o(n^2), \quad,  \bbE\big( \bbE[ \| \psi(z_i, z_j) \|^2 | z_i ]^2 \big) = o(n^2) \\
	 \bbE\big[ \| \psi(z_i,z_j) \|^2 \| \psi(z_i, z_l) \|^2 \big] = o(n^2), \quad \bbE\big[ \bbE[ \| \psi(z_i,z_j) \psi(z_i, z_l) \| | z_j, z_l]^2 \big] = o(n),
\end{gathered}
\end{align}
then Assumption \ref{asmp:BO} is satisfied.
\end{lemma}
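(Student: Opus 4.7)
The plan is to verify the four claims of Assumption \ref{asmp:BO} by treating $\Banb$ as a sample mean and $\Bnl$ as the linear combination $\tfrac{1}{2(n-1)}U_{n,1}+\tfrac{n-2}{2(n-1)}U_{n,2}$ already displayed in the paper, and bounding the mean and variance of each piece using the listed moment inequalities together with the rate bounds in \eqref{eq:BOcond2}. The rate claims follow from the first moment bounds; the $\oP(n^{-1/2})$ fluctuation claims follow from Chebyshev once the variances are shown to be $o(n^{-1})$.

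For $\Banb = n^{-1}\sum_{i=1}^{n} g_{\gamma}^{\,\prime}(z_i,\theta_0,\gamma_0,\bargamma_n(z_i)-\gamma_0(z_i))$, the summands are i.i.d. Combining the first listed inequality with the first rate bound yields $|\calB^{\texttt{ANB}}|\leq C\,\bbE[\|\bargamma_n(z)-\gamma_0(z)\|]=O(n^{-s})$, while the second pair gives $\Var(\Banb)\leq n^{-1}C\,\Var[\bargamma_n(z)-\gamma_0(z)]=o(n^{-1})$. Chebyshev's inequality then delivers $\|\Banb-\calB^{\texttt{ANB}}\|=\oP(n^{-1/2})$.

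For $U_{n,1}$, whose kernel is $h_1(z_i,z_j)=g_{\gamma\gamma}^{\,\prime\prime}(z_i,\theta_0,\gamma_0,\phi(z_i,z_j),\phi(z_i,z_j))$, the third listed inequality and the corresponding rate condition give $|\bbE[U_{n,1}]|\leq C\,\bbE[\|\psi(z_i,z_j)\|^2]=O(n^{1-2r})$, so $\calB^{\texttt{NL}}=\tfrac{1}{2(n-1)}\bbE[U_{n,1}]=O(n^{-2r})$. The standard Hoeffding variance decomposition for a (possibly non-symmetric) degree-$2$ U-statistic yields $\Var(U_{n,1})\leq \tfrac{C}{n}\bigl(\Var(\bbE[h_1|z_i])+\Var(\bbE[h_1|z_j])\bigr)+\tfrac{C}{n^2}\Var(h_1)$, which by the stated inequalities and \eqref{eq:BOcond2} is $\tfrac{C}{n}\cdot o(n^2)+\tfrac{C}{n^2}\cdot o(n^3)=o(n)$. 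Hence $\tfrac{1}{n-1}(U_{n,1}-\bbE[U_{n,1}])=\oP(n^{-1/2})$ by Chebyshev.

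The main technical step, and the step I expect to be the principal obstacle, is the analysis of the degree-$3$ term $U_{n,2}$ with asymmetric kernel $h_2(z_i,z_j,z_l)=g_{\gamma\gamma}^{\,\prime\prime}(z_i,\theta_0,\gamma_0,\phi(z_i,z_j),\phi(z_i,z_l))$. The crucial observation is that bilinearity in the last two arguments together with $\bbE[\phi(z_i,z_l)|z_i]=0$ and the i.i.d.\ structure forces the degeneracies $\bbE[h_2]=\bbE[h_2|z_i]=\bbE[h_2|z_j]=\bbE[h_2|z_l]=\bbE[h_2|z_i,z_j]=\bbE[h_2|z_i,z_l]=0$, leaving only $\tilh_2(z_j,z_l)\coloneqq \bbE[h_2|z_j,z_l]$ possibly nonzero. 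Summing out $i$ first gives the split $U_{n,2}=U_{n,2}^{(1)}+U_{n,2}^{(2)}$, where $U_{n,2}^{(1)}=\tfrac{1}{n(n-1)}\sum_{j\neq l}\tilh_2(z_j,z_l)$ is a fully degenerate degree-$2$ U-statistic and $U_{n,2}^{(2)}=\tfrac{1}{n(n-1)(n-2)}\sum_{i\neq j\neq l}(h_2-\tilh_2)$ is a fully degenerate degree-$3$ U-statistic. Standard variance bounds for degenerate U-statistics then give $\Var(U_{n,2}^{(1)})\leq Cn^{-2}\bbE[\tilh_2^{\,2}]=o(n^{-1})$ by the last line of \eqref{eq:BOcond2}, and $\Var(U_{n,2}^{(2)})\leq Cn^{-3}\bbE[h_2^{\,2}]=o(n^{-1})$ by the second-to-last line. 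Therefore $U_{n,2}=\oP(n^{-1/2})$, and assembling the pieces gives $\Bnl-\calB^{\texttt{NL}}=\tfrac{1}{2(n-1)}(U_{n,1}-\bbE[U_{n,1}])+\tfrac{n-2}{2(n-1)}U_{n,2}=\oP(n^{-1/2})$. Beyond this degeneracy/symmetrization argument for $U_{n,2}$, the remainder of the proof is routine moment bookkeeping.
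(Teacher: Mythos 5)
Your proposal is correct and follows essentially the same route as the paper: treat $\Banb$ as an i.i.d.\ average handled by Chebyshev, split the quadratic term into $U_{n,1}$ and $U_{n,2}$, bound $U_{n,1}$ via the Hoeffding variance formula, and exploit the degeneracy of $U_{n,2}$ (bilinearity plus $\bbE[\phi(z_i,\cdot)\mid z_i]=0$) before bookkeeping the variances with the listed moment conditions. The only cosmetic difference is that you decompose $U_{n,2}$ explicitly into a degenerate degree-2 piece and a fully degenerate degree-3 piece, whereas the paper symmetrizes the kernel, shows the first-order projection $\eta_{2,1}\equiv 0$, and applies Hoeffding's variance formula — the same degeneracy argument in slightly different packaging.
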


\begin{proof}

According to the expression of $\hatgamma_n(z_i) - \bargamma_n(z_i)$, we can obtain
\begin{align*}
	\whatG_{n,\gamma\gamma}^{\,\prime\prime} \big(\theta_0,\gamma_0, \hatgamma_n - \bargamma_n, \hatgamma_n - \bargamma_n \big) = \frac{1}{n-1} U_{n,1} + \frac{n-2}{n-1} U_{n,2},
\end{align*}
where $U_{n,1}$ and $U_{n,2}$ are two U-statistics:
\begin{align*}
	U_{n,1} &= \frac{1}{n(n-1)} \sum_{i\neq j} g_{\gamma\gamma}^{\prime\prime}(z_i, \theta_0, \gamma_0, \phi(z_i,z_j), \phi(z_i,z_j)), \\
	U_{n,2} &= \frac{1}{n(n-1)(n-2)} \sum_{i\neq j\neq l} g_{\gamma\gamma}^{\prime\prime}(z_i, \theta_0, \gamma_0, \phi(z_i,z_j), \phi(z_i,z_l)).
\end{align*}

For $i\neq j \neq l$, define 
\begin{gather*}
	\eta_1(z_{i}, z_{j}) = \frac{1}{2} \sum_{ i_1 \neq i_2 \in \{ i, j\}} g_{\gamma\gamma}^{\prime\prime}(z_{i_1}, \theta_0, \gamma_0, \phi(z_{i_1}, z_{i_2}), \phi(z_{i_1}, z_{i_2})), \\
	\eta_2(z_{i}, z_{j}, z_{l}) = \frac{1}{6} \sum_{ i_1 \neq i_2 \neq i_3 \in \{ i, j, l\}} g_{\gamma\gamma}^{\prime\prime}(z_{i_1}, \theta_0, \gamma_0, \phi(z_{i_1}, z_{i_2}), \phi(z_{i_1}, z_{i_3})).
\end{gather*}
Then these two functions are symmetric in its arguments, hence are the kernels of $U_{n,1}$ and $U_{n,2}$ respectively. We can then re-write the two U-statistics as
\begin{align*}
	U_{n,1} = \left( \begin{matrix}
		n \\
		2
	\end{matrix} \right)^{-1} \sum_{i<j} \eta_1(z_i, z_j) \quad \text{and} \quad
	U_{n,2} = \left( \begin{matrix}
			n \\
			3
		\end{matrix} \right)^{-1} \sum_{i<j<l} \eta_2(z_i, z_j, z_l).
\end{align*}
According to the assumptions of Lemma \ref{lem:BO}, we have
\begin{align*}
	\calB^{\texttt{NL}} = \frac{1}{n-1} \bbE[U_{n,1}] = \frac{1}{n-1} \bbE[ g_{\gamma\gamma}^{\prime\prime}(z_i, \theta_0, \gamma_0, \phi(z_i,z_j), \phi(z_i,z_j)) ].
\end{align*}

Following \cite{Hoeffding:1948}, we define
\begin{align*}
	\eta_{1,1}(x_1, \cdots, x_c) = \bbE[ \eta_1(x_1,\cdots,x_c, z_{c+1}, \cdots, z_2) ], \quad c=1,2.
\end{align*}
Hence, $\eta_{1,2}(z_1, z_2) = \eta_1(z_1, z_2)$ and $\eta_{1,1}(z_1) = \bbE[\eta_1(z_1, z_2) | z_1]$. The results given by \cite{Hoeffding:1948} imply that
\begin{align*}
	\Var(U_{n,1}) &= \left( \begin{matrix}
			n \\
			2
		\end{matrix} \right)^{-1} \sum_{c=1}^2 \left( \begin{matrix}
			 2  \\
			 c  \\
		\end{matrix} \right) \left( \begin{matrix}
			 n-2  \\
			 2-c  \\
		\end{matrix} \right) \Var(\eta_{1,c}) \\
		&= \frac{2}{n(n-1)} \Big( 2(n-2) \Var(\eta_{1,1}) + \Var(\eta_{1,2}) \Big),
\end{align*}
According to the assumption, we get
\begin{align*}
	\Var(\eta_{1,1}) &= \Var\big( \bbE[\eta_1(z_1, z_2) | z_1] \big) \leq C \, \Big( \bbE\big( \bbE[ \| \psi(z_i, z_j) \|^2 | z_i ]^2 \big) + \bbE\big( \bbE[ \| \psi(z_i, z_j) \|^2 | z_j ]^2 \big) \Big), \\
	\Var(\eta_{1,2}) &= \Var\big( \eta_1(z_1, z_2) \big) \leq C \, \bbE[ \| \psi(z_i, z_j) \|^4 ].
\end{align*}
Therefore, we get
\begin{align*}
	& \frac{1}{n-1} \big( U_{n,1} - \bbE[U_{n,1}] \big) \\
	=\,& \OP\Big( n^{-3/2} \sqrt{ \bbE\big( \bbE[\psi(z_i, z_j)^2 | z_i ]^2 \big) + \bbE\big( \bbE[\psi(z_i, z_j)^2 | z_j ]^2 \big) }\Big) + \OP\Big( n^{-5/2} \sqrt{\bbE[ \| \psi(z_i, z_j) \|^4 ]} \Big).
\end{align*}

As for $U_{n,2}$, we can also define $\eta_{2,c}$ for $c=1,2,3$. More specifically, we have $\eta_{2,3}(z_1, z_2, z_3) = \eta_2(z_1, z_2, z_3)$ and
\begin{align*}
	\eta_{2,2}(z_1, z_2) = \bbE[ \eta_2(z_1, z_2, z_3) | z_1, z_2 ] \quad \eta_{2,1}(z_1) = \bbE[ \eta_2(z_1, z_2, z_3) | z_1 ].
\end{align*}
According to the bi-linear property of $g_{\gamma\gamma}^{\prime\prime}$ with its last two arguments and the property of $\phi$, we can derive that
\begin{align*}
	& \bbE[ g_{\gamma\gamma}^{\prime\prime}(z_i, \theta_0, \gamma_0, \phi(z_i,z_j), \phi(z_i,z_l)) | z_i ] \\
	=\,& \bbE\big[ \bbE[ g_{\gamma\gamma}^{\prime\prime}(z_i, \theta_0, \gamma_0, \phi(z_i,z_j), \phi(z_i,z_l)) | z_i, z_j] | z_i \big] \\
	=\,& \bbE\big[ g_{\gamma\gamma}^{\prime\prime}(z_i, \theta_0, \gamma_0, \phi(z_i,z_j), \bbE[ \phi(z_i,z_l) | z_i, z_j] ) | z_i \big] \\
	=\,& \bbE\big[ g_{\gamma\gamma}^{\prime\prime}(z_i, \theta_0, \gamma_0, \phi(z_i,z_j), \bbE[ \phi(z_i,z_l) | z_i] ) | z_i \big] = 0, \\
	& \bbE[ g_{\gamma\gamma}^{\prime\prime}(z_i, \theta_0, \gamma_0, \phi(z_i,z_j), \phi(z_i,z_l)) | z_j ] \\
	=\,& \bbE\big[ \bbE[ g_{\gamma\gamma}^{\prime\prime}(z_i, \theta_0, \gamma_0, \phi(z_i,z_j), \phi(z_i,z_l)) | z_i, z_j] | z_j \big] \\
	=\,& \bbE\big[ g_{\gamma\gamma}^{\prime\prime}(z_i, \theta_0, \gamma_0, \phi(z_i,z_j), \bbE[ \phi(z_i,z_l) | z_i, z_j] ) | z_j \big] \\
	=\,& \bbE\big[ g_{\gamma\gamma}^{\prime\prime}(z_i, \theta_0, \gamma_0, \phi(z_i,z_j), \bbE[ \phi(z_i,z_l) | z_i] ) | z_j \big] = 0.
\end{align*}
Hence, $\eta_{2,1}(z_1) \equiv 0$. The above results also implies that $\bbE[U_{n,2}]=0$.

On the other hand, it can be shown that
\begin{align*}
	& \Var\big( \eta_{2,2}(z_1, z_2) \big) \leq C \, \Var\big( \bbE[g_{\gamma\gamma}^{\prime\prime}(z_i, \theta_0, \gamma_0, \phi(z_i,z_j), \phi(z_i,z_l) | z_j, z_l] \big) \\
	 \leq\,& C \, \Var\big( g_{\gamma\gamma}^{\prime\prime}(z_i, \theta_0, \gamma_0, \phi(z_i,z_j), \phi(z_i,z_l) \big) \leq C \, \bbE\big[ \bbE[\psi(z_i,z_j) \psi(z_i, z_l) | z_j, z_l]^2 \big], \\
	 & \Var\big( \eta_{2,3}(z_1, z_2, z_3) \big) \leq C \, \bbE\big[ \bbE[\psi(z_i,z_j) \psi(z_i, z_l) | z_j, z_l]^2 \big].
\end{align*}
It then follows that
\begin{align*}
	\Var( U_{n,2} ) &= 0 + \frac{6}{n(n-1)(n-2)} \Big( (n-3) \Var\big( \eta_{2,2}(z_1, z_2) \big) + \Var\big( \eta_{2,3}(z_1, z_2, z_3) \big) \Big) \\
		&\leq C \frac{1}{n^2} \bbE\big[ \bbE[ \| \psi(z_i,z_j) \psi(z_i, z_l) \| \,|\, z_j, z_l]^2 \big] +C \frac{1}{n^3} \bbE\big[ \| \psi(z_i,z_j) \|^2 \| \psi(z_i, z_l) \|^2 \big].
\end{align*} 
This implies that
\begin{align*}
	U_{n,2} = \OP\Big( n^{-1} \sqrt{ \bbE\big[ \bbE[ \| \psi(z_i,z_j) \psi(z_i, z_l) \| \,|\, z_j, z_l]^2 \big] } \Big) + \OP\Big( n^{-3/2} \sqrt{ \bbE[ \psi(z_i, z_j)^2 \psi(z_i, z_l)^2 ] }  \Big) .
\end{align*}

To sum up, if one can choose the turning parameter in such a way that
\begin{gather*}
	\bbE[ g_{\gamma\gamma}^{\prime\prime}(z_i, \theta_0, \gamma_0, \phi(z_i,z_j), \phi(z_i,z_j)) ] = O(n^{1-2r}), \quad \bbE\big( \bbE[ \| \psi(z_i, z_j) \|^2 | z_i ]^2 \big) = o(n^2), \\
	\bbE\big( \bbE[ \| \psi(z_i, z_j) \|^2 | z_j ] \|^2 \big) = o(n^2), \quad \bbE[ \| \psi(z_i, z_j) \|^4 ] = o(n^3) \\
	\bbE\big[ \bbE[\psi(z_i,z_j) \psi(z_i, z_l) | z_j, z_l]^2 \big] = o(n), \quad \bbE[ \psi(z_i, z_j)^2 \psi(z_i, z_l)^2 ] = o(n^2).
\end{gather*}
then we have
\begin{align*}
	\calB^{\texttt{NL}} = O(n^{-2r}) \quad \text{and} \quad \| \Bnl - \calB^{\texttt{NL}} \| = \oP(n^{-1/2}).
\end{align*}

As for the second one, note that $\Banb$ is the average of a sequence of \text{i.i.d.} random variables with negligible variance:
\begin{align*}
	\Banb \coloneqq \whatG_{n,\gamma}^{\,\prime}(\theta_0,\gamma_0,\bargamma_n - \gamma_0) = \frac{1}{n} \sum_{i=1}^n g_{\gamma}^{\prime}(z_i, \theta_0, \gamma_0, \bargamma_n(z_i) - \gamma_0(z_i) ).
\end{align*}
Let $\calB^{\texttt{ANB}} = \bbE[ g_{\gamma}^{\prime}(z_i, \theta_0, \gamma_0, \bargamma_n(z_i) - \gamma_0(z_i) ) ]$. We then have
\begin{gather*}
	\| \calB^{\texttt{ANB}} \| \leq \bbE[ \| g_{\gamma}^{\prime}(z_i, \theta_0, \gamma_0, \bargamma_n(z_i) - \gamma_0(z_i) ) \| ] \leq C \, \bbE[ \|  \bargamma_n(z_i) - \gamma_0(z_i)  \| ], \\
	\Var( \Banb ) = \frac{1}{n} \Var\big( g_{\gamma}^{\prime}(z_i, \theta_0, \gamma_0, \bargamma_n(z_i) - \gamma_0(z_i) ) \big) \leq \frac{C}{n} \Var\big( \bargamma_n(z_i) - \gamma_0(z_i) \big).
\end{gather*}
Hence, the desired result readily follows. This completes the proof.

\end{proof}

Suppose $\hatgamma_n$ is the \textquotedblleft leave-one-out\textquotedblright{} kernel density estimator. We have $\psi(z_i, z_j) = K_h(z_i - z_j)$. It then follows that (refer to the appendix for detailed calculation)
\begin{gather*}
	\frac{1}{n-1} \bbE[ \psi(z_i, z_j)^2 ] = \frac{1}{(n-1)h^{d_z}} \int K^2(u) \gamma_0(x) \gamma_0(x-hu) du dx = O\Big(\frac{1}{nh^{d_z}} \Big).
\end{gather*}
Similarly, we can obtain $\bbE[ \psi(z_i, z_j)^4 ] = O\big( h^{-3d_z} \big)$.

Moreover, one can show that
\begin{align*}
	\bbE[\psi(z_i, z_j)^2 | z_i ] = \frac{1}{h^{d_z}} \int_{\bbR} K^2(u) \gamma_0(z_i-hu) du. 
\end{align*}
It then follows that
\begin{align*}
	& \bbE\big( \bbE[\psi(z_i, z_j)^2 | z_i ]^2 \big) \\
	=\,& \frac{1}{h^{2d_z}}  \int_{\bbR}  \int_{\bbR} K^2(u) K^2(v) \gamma_0(x-hu) \gamma_0(x-hv) \gamma_0(x) dudvdx = O\Big(\frac{1}{h^{2d_z}} \Big).
\end{align*}
In addition, we can derive that
\begin{align*}
	& \bbE\big[ \bbE[\psi(z_i,z_j) \psi(z_i, z_l) | z_j, z_l]^2 \big] \\
	=\,& \bbE\Big(  \int_{\bbR} \frac{1}{h^{2d_z}} K\big(\frac{x-z_j}{h}\big) K\big(\frac{x-z_l}{h}\big) \gamma_0(x) dx \Big)^2 \\
	=\,& \frac{1}{h^{4d_z}}  \int_{\bbR} \int_{\bbR} \int_{\bbR} \int_{\bbR} K\big(\frac{x-x_j}{h}\big) K\big(\frac{y-x_j}{h}\big) K\big(\frac{x-x_l}{h}\big) K\big(\frac{y-x_l}{h}\big) \\
		& \times \gamma_0(x) \gamma_0(y) \gamma_0(x_j) \gamma_0(x_l) dx dy dx_j dx_l \\
	=\,& \frac{1}{h^{d_z}}  \int_{\bbR} \int_{\bbR} \int_{\bbR} \int_{\bbR} K(u) K(v) K(w) (u-v+w) \gamma_0(x) \gamma_0(x-hu) \gamma_0(x-hv) \\
		& \times \gamma_0(x-hu+hw) dudvdw dx = O\Big( \frac{1}{h^{d_Z}} \Big).
\end{align*}
Similarly, we get
\begin{align*}
	& \bbE\big[ \| \psi(z_i,z_j) \psi(z_i, z_l) \|^2 \big] \\
		=\,& \int_{\bbR} \int_{\bbR} \int_{\bbR} \frac{1}{h^{4d_z}} K^2\big(\frac{x - x_j}{h}\big) K^2\big(\frac{x-x_l}{h}\big) \gamma_0(x) \gamma_0(x_j) \gamma_0(x_l) dx dx_j dx_l \\
		=\,& \frac{1}{h^{2d_z}} \int_{\bbR} \int_{\bbR} \int_{\bbR} K^2(u) K^2(v) \gamma_0(x) \gamma_0(x-hu) \gamma_0(x-hv) dx du dv = O\Big( \frac{1}{h^{2d_z}} \Big).
\end{align*}

Hence, the corresponding conditions given in \eqref{eq:BOcond2} only require that
\begin{align*}
	n h^{d_z} = n^{2r} \rightarrow \infty.
\end{align*}
Note that the convergence rate of $\hatgamma_n - \bargamma_n$ is given by $\sqrt{nh^{dz}}$. Hence, the above condition merely requires that $\hatgamma_n-\bargamma_n$ converges to zero. 

On the other hand, we have
\begin{align*}
	\bargamma_n(z_i) - \gamma_0(z_i) = \int K(u) [\gamma_0(z_i - hu) - \gamma_0(z_i)] du.
\end{align*}
It the easy to see that the second last condition given in \eqref{eq:BOcond2} is satisfied with $h^m = n^{-s}$, where $m$ is the order of the kernel $K$. The last condition in \eqref{eq:BOcond2} only requires that the nonparametric bias $\bargamma_n - \gamma_0$ is asymptotically negligible. 

To briefly sum up, in the kernel density case, the conditions in \eqref{eq:BOcond2} essentially requires $\hatgamma_n$ to be a consistent nonparametric estimator of $\gamma_0$.

\subsection{Proof of Lemma \ref{lem:op1} and Theorem \ref{thm:infeasibleCLT}}

Recall that
\begin{align*}
\begin{split} 
	\whatG_n(\theta_0,\hatgamma_n) =\,&  \whatG_n(\theta_0,\gamma_0) + \whatG_{n,\gamma}^{\,\prime}(\theta_0,\gamma_0,\hatgamma_n - \bargamma_n) + \Banb + \Bnl \\
		& + \whatG_{n,\gamma\gamma}^{\,\prime\prime}(\theta_0,\gamma_0,\hatgamma_n - \bargamma_n, \bargamma_n - \gamma_0) + \frac{1}{2} \whatG_{n,\gamma\gamma}^{\,\prime\prime}(\theta_0,\gamma_0,\bargamma_n - \gamma_0, \bargamma_n - \gamma_0) \\
		&+ \whatG_{n,R}(\theta_0,\gamma_0,\hatgamma_n - \gamma_0).
\end{split}
\end{align*}

Following the argument in Section \ref{subsec:VU}, the term $\whatG_{n,\gamma\gamma}^{\,\prime\prime}(\theta_0,\gamma_0,\hatgamma_n - \bargamma_n, 1)$ is a U-statistic, hence is of order $\OP(n^{-1/2})$. Therefore, $\whatG_{n,\gamma\gamma}^{\,\prime\prime}(\theta_0,\gamma_0,\hatgamma_n - \bargamma_n, \bargamma_n - \gamma_0) = \oP(n^{-1/2})$. Moreover, it is also easy to show that $ \frac{1}{2} \whatG_{n,\gamma\gamma}^{\,\prime\prime}(\theta_0,\gamma_0,\bargamma_n - \gamma_0, \bargamma_n - \gamma_0) = \OP( n^{-2 \underline{s}} )$. As for the last term, we have
\begin{align*}
	\bbE[ \| \whatG_{n,R}(\theta_0,\gamma_0,\hatgamma_n - \gamma_0) \| ] \leq \frac{1}{n} \sum_{i=1}^n C \, \bbE[ \| \hatgamma_n(z_i) - \gamma_0(z_i) \|^3 ] \leq C n^{-3(r \wedge \underline{s})}.
\end{align*}
It then follows that
\begin{align*}
	\whatG_n(\theta_0,\hatgamma_n) =\,& \OP(n^{-1/2}) + \Banb + \Bnl + \OP( n^{-2 \underline{s}} ) + \OP(n^{-3(r \wedge \underline{s})}).
\end{align*}

Cauchy-Schwartz inequality implies that
\begin{align*}
	\bbE\big( \| ( \calJ_n - \calJ_0 ) \Bnl  \| \big) \leq\,& C \Big( \bbE\big( \| \whatG_n(\theta_0,\hatgamma_n) \|^2 \big) \, \bbE\big( \| \Bnl \|^2 \big) \Big)^{1/2} \\
		\leq\,& C \Big( ( n^{-2\underline{s}} + n^{-4r} ) n^{-4r} \Big)^{1/2} \leq C (n^{-(\underline{s} + 2r)} + n^{-4r} ).
\end{align*}
The right hand side is $o(n^{-1/2})$ if $\underline{s} + 2r >1/2$ and $r>1/8$. This proves part (i).

As for part (ii), similar argument yields that
\begin{align*}
	\bbE\big( \| ( \calJ_n - \calJ_0 ) \Banb  \| \big) \leq\,& C \Big( \bbE\big( \| \whatG_n(\theta_0,\hatgamma_n) \|^2 \big) \, \bbE\big( \| \Banb \|^2 \big) \Big)^{1/2} \\
		\leq\,& C \Big( ( n^{-2\underline{s}} + n^{-4r} ) n^{-2\underline{s}} \Big)^{1/2} \leq C (n^{-2\underline{s}} + n^{-(2r + \underline{s})} ).
\end{align*}
The right hand side is $o(n^{-1/2})$ if $\underline{s} + 2r >1/2$ and $\underline{s}>1/4$.

The above discussion indicates that
\begin{align*}
	& \whatG_n(\theta_0,\hatgamma_n) -  \whatG_n(\theta_0,\gamma_0) - \whatG_{n,\gamma}^{\,\prime}(\theta_0,\gamma_0,\hatgamma_n - \bargamma_n) - \Banb - \Bnl \\	
	=\,& \oP(n^{-1/2}) + \OP( n^{-2 \underline{s}} ) + \OP(n^{-3(r \wedge \underline{s})}).
\end{align*}
If $\underline{s} > 1/4$ and $r>1/6$, then it is easy to see that the right hand side is root-$n$ negligible. This completes the proof of Lemma \ref{lem:op1}.

Lastly, note that when $\underline{s} > 1/4$ and $r>1/6$, all the three conclusions of Lemma \ref{lem:op1} are true. Then, Theorem \ref{thm:infeasibleCLT} readily follows from Lemma \ref{lem:op1} and Assumption \ref{asmp:AN}.

\subsection{Proof of Theorems \ref{thm:jackknifeCLT} and \ref{thm:jackknifeVar} }

We first prove Theorems \ref{thm:jackknifeCLT}. Given the assumptions, it is easy to derive that
\begin{align*}
	&\quad \hattheta_n^w - \theta_0 = \sum_{q=1}^Q w_q \big( \hattheta_n(h_q) - \theta_0 \big) = \calJ_0 \sum_{q=1}^Q w_q \whatG_n\big (\theta_0,\hatgamma_n(h_q) \big) + \oP(n^{-1/2}) \\
		&= \calJ_0 \Big( \whatG_n(\theta_0,\gamma_0) + \whatG_{n,\gamma}^{\,\prime}\big(\theta_0,\gamma_0, \hatgamma_n^w - \bargamma_n^w \big) +  \widetilde{\calB}_{n,1}^{\texttt{ANB}} + \widetilde{\calB}_{n,2}^{\texttt{ANB}} + \widetilde{\calB}_{n}^{\texttt{NL}} \Big) + \oP(n^{-1/2}),
\end{align*}
where
\begin{gather*}
	\widetilde{\calB}_{n,1}^{\texttt{ANB}} = \sum_{q=1}^Q w_q \BanbOne(h_q), \quad  
	\widetilde{\calB}_{n,2}^{\texttt{ANB}} = \sum_{q=1}^Q w_q \BanbTwo(h_q), \quad 
	\widetilde{\calB}_{n}^{\texttt{NL}} = \sum_{q=1}^Q w_q \Bnl(h_q).
\end{gather*}
We are going to show these three terms are root-$n$ negligible.

First of all, it is quite straightforward to show that
\begin{align*}
	\widetilde{\calB}_{n,2}^{\texttt{ANB}} &= \sum_{q=1}^Q w_q \BanbTwo(h_q) = \sum_{q=1}^Q w_q \frac{1}{n} \sum_{i=1}^n g_\gamma^{\,\prime}\big(z_i,\theta_0,\gamma_0, \frac{1}{nh_q^{d_z}} K(0) \big) \\
		&= \Big( \sum_{q=1}^Q w_q \frac{1}{nh_q^{d_z}} \Big) \times \Big( \frac{1}{n} \sum_{i=1}^n g_\gamma^{\,\prime}\big(z_i,\theta_0,\gamma_0, K(0) \big) \Big)= \OP\Big( \sum_{q=1}^Q w_q \frac{1}{nh_q^{d_z}} \Big) \\
		&= \oP(n^{-1/2}).
\end{align*}
Now, for notation simplicity, assume that $\bargamma_n(z_i) - \gamma_0(z_i)$ does not involve the \textquotedblleft singularity bias,\textquotedblright{} as we have removed it. We readily get
\begin{align*}
	\widetilde{\calB}_{n,1}^{\texttt{ANB}} &= \sum_{q=1}^Q w_q \BanbOne(h_q) = \sum_{q=1}^Q w_q \frac{1}{n} \sum_{i=1}^n g_\gamma^{\,\prime}\big(z_i,\theta_0,\gamma_0, \bargamma_n(z_i;h_q) - \gamma_0(z_i) \big) \\
		&= \sum_{q=1}^Q w_q \frac{1}{n} \sum_{i=1}^n g_\gamma^{\,\prime}\Big(z_i,\theta_0,\gamma_0, \int K(u) [\gamma_0(z_i - h_q u) - \gamma_0(z_i)] du \Big) \\
		&= \Big( \sum_{q=1}^Q w_q h_q^m \Big) \times \frac{1}{m} \sum_{i=1}^n g_\gamma^{\,\prime}\Big(z_i,\theta_0,\gamma_0, \frac{1}{m!} \vect\big( \gamma_0^{(m)}(z_i) \big)^\intercal \int K(u) u^{\otimes m} du \Big) \\
		&= \OP\Big( \sum_{q=1}^Q w_q h_q^m \Big) = \oP(n^{-1/2}).
\end{align*}
It can also be shown that
\begin{align*}
	\widetilde{\calB}_{n}^{\texttt{NL}} &= \sum_{q=1}^Q w_q \Bnl(h_q) = \sum_{q=1}^Q w_q \frac{1}{n} \sum_{i=1}^n g_{\gamma\gamma}^{\,\prime\prime}\big(z_i,\theta_0,\gamma_0, \hatgamma_n(h_q) - \bargamma_n(h_q), \hatgamma_n(h_q) - \bargamma_n(h_q) \big) \\
		&=  \sum_{q=1}^Q w_q \frac{1}{nh_q^{d_z}}  \frac{1}{n}  \sum_{i=1}^n g_{\gamma\gamma}^{\,\prime\prime}\big(z_i,\theta_0,\gamma_0, \sqrt{nh^{d_z}}[\hatgamma_n(h_q) - \bargamma_n(h_q)], \sqrt{nh^{d_z}}[\hatgamma_n(h_q) - \bargamma_n(h_q)] \big).
\end{align*}
Here, we note that $\{ \sqrt{nh^{d_z}}[\hatgamma_n(h_q) - \bargamma_n(h_q)] \}_{q=1}^Q$ are standardized so that the asymptotic distributions are the same for different $q$. Let 
\begin{align*}
	\xi_n^q = \frac{1}{n}  \sum_{i=1}^n g_{\gamma\gamma}^{\,\prime\prime}\big(z_i,\theta_0,\gamma_0, \sqrt{nh^{d_z}}[\hatgamma_n(h_q) - \bargamma_n(h_q)], \sqrt{nh^{d_z}}[\hatgamma_n(h_q) - \bargamma_n(h_q)] \big).
\end{align*}
Then, intuitively, $\{\xi_n^q \}_{q=1}^Q$ are asymptotically identical to each other. Hence, we have $ \bbE[\xi_n^q  \xi_n^{q'} ] - \bbE[(\barxi_n)^2] \conP 0$, where $\barxi_n = \frac{1}{Q} \sum_{q=1}^Q \xi_n^q$. As a consequent, we obtain
\begin{align*}
	n\, \bbE[ (\widetilde{\calB}_{n}^{\texttt{NL}})^2 ] = n \sum_{q,q'=1}^Q \Big( \frac{w_q}{n h_q^{d_z}} \frac{w_{q'}}{n h_{q'}^{d_z}} 
	\bbE[\xi_n^q \xi_n^{q'} ] \Big) \conP 0.
\end{align*}
Chebyshev's inequality implies that $\widetilde{\calB}_{n}^{\texttt{NL}} = \oP(n^{-1/2})$. Therefore, the conclusion of Theorem \ref{thm:jackknifeCLT} readily follows.

Now let us turn to Theorem \ref{thm:jackknifeVar}. When $g^\ast \equiv g$, $g_{\gamma}^{\,\ast\prime} \equiv g_{\gamma}^{\,\prime}$, and $g_{\gamma\gamma}^{\,\ast\prime\prime} \equiv g_{\gamma\gamma}^{\,\prime\prime}$, one can follow the above argument and show that
\begin{gather*}
	\widetilde{\calB}_{n,1}^{\texttt{ANB} \ast} = \oPstar(n^{-1/2}), \quad  
	\widetilde{\calB}_{n,2}^{\texttt{ANB} \ast}  = \oPstar(n^{-1/2}), \quad 
	\widetilde{\calB}_{n}^{\texttt{NL} \ast}  = \oPstar(n^{-1/2}).
\end{gather*}
It then follows that
\begin{align*}
	& \hattheta_n^{w \ast} - \hattheta_n^{w} = \sum_{q=1}^Q w_q ( \hattheta_n^\ast(h_q) - \hattheta_n(h_q ) = \sum_{q=1}^Q w_q \calJ_n^\ast \whatG_n\big(\hattheta_n, \hatgamma_n^\ast(h_q) \big) \\
	=\,& \sum_{q=1}^Q w_q \calJ_0^\ast \whatG_n\big(\hattheta_n, \hatgamma_n^\ast(h_q) \big) + \oPstar(n^{-1/2}) \\
	=\,& \calJ_0^\ast \sum_{q=1}^Q w_q \Big( \whatG_n^\ast\big(\hattheta_n, \hatgamma_n(h_q) \big) + \whatG_{n,\gamma}^{\,\ast\prime}(\hattheta_n, \hatgamma_n(h_q), \hatgamma_n^{\ast}(h_q) - \bargamma_n^{\ast}(h_q) \Big) + \oPstar(n^{-1/2}),
\end{align*}
where
\begin{gather*}
	\whatG_n^\ast\big(\hattheta_n, \hatgamma_n(h_q) \big) = \frac{1}{n} \sum_{i=1}^n g\big(z_i^\ast, \hattheta_n, \hatgamma_h(h_q) \big), 
\end{gather*}
and
\begin{align*}
	& \whatG_{n,\gamma}^{\prime\ast} \big(\hattheta_n, \hatgamma_n(h_q), \hatgamma_n^\ast(h_q) - \bargamma_n^\ast(h_q) \big) = \frac{1}{n} \sum_{i=1}^n g_{\gamma}^{\,\prime}\big(z_i^\ast, \hattheta_n, \hatgamma_n(h_q), \hatgamma_n^\ast(h_q) - \bargamma_n^\ast(h_q) \big) \\
		=\,& \frac{1}{n(n-1)} \sum_{\substack{i,j=1 \\ i\neq j}}^n g_{\gamma}^{\,\prime}\big(z_i^\ast, \hattheta_n, \hatgamma_n(h_q), \phi(z_i^\ast, z_j^\ast; h_q) \big) + \oPstar(n^{-1}).
\end{align*}

Note that the functional forms are the same as in the original case. The only difference is that now we use the bootstrap sample $\{z_i^\ast\}_{i=1}^n$, rather than the original sample $\{z_i\}_{i=1}^n$. In view of these, under the bootstrap measure $\bbP_n^\ast$, the asymptotic variance 
\begin{align*}
	\Sigma_g^{w\ast} \coloneqq \Var^\ast\bigg( \sqrt{n} \Big( \sum_{q=1}^Q w_q \whatG_n^\ast\big(\hattheta_n, \hatgamma_n(h_q) \big) + \whatG_{n,\gamma}^{\,\ast\prime}(\hattheta_n, \hatgamma_n, \hatgamma_n^{w\ast} - \bargamma_n^{w\ast} \big)  \Big) \bigg)
\end{align*}
should converge in probability to the sample variance $\Sigma_g^w(\hattheta_n, \hatgamma_n)$ of 
\begin{align*}
	\Big\{ \sum_{q=1}^Q w_q g\big(z_i, \hattheta_n, \hatgamma_n(h_q) \big) + g_{\gamma}^{\,\prime}(z_i, \hattheta_n, \hatgamma_n, \hatgamma_n^w - \bargamma_n^w ) \Big\}_{i=1}^n.
\end{align*}
As $n$ goes to infinity, the Lipschitz continuous assumption on $g$ and $g_{\gamma}^{\,\prime}$ implies that $\Sigma_g^w(\hattheta_n, \hatgamma_n) \conP \Sigma_g^w(\theta_0, \gamma_0) = \Sigma_g^w$. Together with $\calJ_0^\ast \conP \calJ_0$, we readily get $\Sigma_{\theta}^{w\ast} \conP \Sigma_{\theta}^{w}$.



\subsection{Proof of Theorem \ref{thm:analytical} }

According to the assumption, we readily get the following decomposition
\begin{align*}
	& \hattheta_n - \theta_0 - \calJ_n \whatBnl - \calJ_n\whatBanb \\
	=\,& \big( \hattheta_n - \theta_0 - \calJ_n [\Bnl + \Banb + \whatG_n(\theta_0,\gamma_0) + \whatG_{n,\gamma}^{\,\prime}(\theta_0,\gamma_0,\hatgamma_n - \bargamma_n) ] \big) \\
		& + \calJ_n \big( \whatG_n(\theta_0,\gamma_0) + \whatG_{n,\gamma}^{\,\prime}(\theta_0,\gamma_0,\hatgamma_n - \bargamma_n) + \Banb - \whatBanb \big) + \calJ_n ( \Bnl - \whatBnl ) \\
	=\,& \calJ_n \big( \whatG_n(\theta_0,\gamma_0) + \whatG_{n,\gamma}^{\,\prime}(\theta_0,\gamma_0,\hatgamma_n - \bargamma_n) + \Banb - \whatBanb \big) + \calJ_n ( \Bnl - \whatBnl ) + \oP(n^{-1/2}).
\end{align*}
Note that 
\begin{align*}
	& \whatG_{n,\gamma}^{\,\prime}(\theta_0,\gamma_0,\hatgamma_n - \bargamma_n) + \Banb - \whatBanb \\
	=\,& \whatG_{n,\gamma}^{\,\prime}(\theta_0,\gamma_0,\hatgamma_n - \gamma_0) - \whatG_{n,\gamma}^{\,\prime}(\hattheta_n, \hatgamma_n, \hat{\bargamma}_n - \hatgamma_n)  \\
	=\,& \whatG_{n,\gamma}^{\,\prime}(\theta_0,\gamma_0,\hatgamma_n - \gamma_0) - \whatG_{n,\gamma}^{\,\prime}(\theta_0, \gamma_0, \hat{\bargamma}_n - \hatgamma_n) + \whatG_{n,\gamma}^{\,\prime}(\theta_0, \gamma_0, \hat{\bargamma}_n - \hatgamma_n) \\
		& - \whatG_{n,\gamma}^{\,\prime}(\hattheta_n, \hatgamma_n, \hat{\bargamma}_n - \hatgamma_n) \\
	=\,& \whatG_{n,\gamma}^{\,\prime}(\theta_0,\gamma_0, 2\hatgamma_n - \hat{\bargamma}_n - \gamma_0 ) + [ \whatG_{n,\gamma}^{\,\prime}(\theta_0, \gamma_0, \hat{\bargamma}_n - \hatgamma_n) - \whatG_{n,\gamma}^{\,\prime}(\hattheta_n, \hatgamma_n, \hat{\bargamma}_n - \hatgamma_n) ] \\
	=\,& \whatG_{n,\gamma}^{\,\prime}(\theta_0,\gamma_0, 2\hatgamma_n - \hat{\bargamma}_n - 2\bargamma_n + \bar{\bargamma}_n ) + \whatG_{n,\gamma}^{\,\prime}(\theta_0,\gamma_0, 2\bargamma_n - \bar{\bargamma}_n - \gamma_0) \\
		& + [ \whatG_{n,\gamma}^{\,\prime}(\theta_0, \gamma_0, \hat{\bargamma}_n - \hatgamma_n) - \whatG_{n,\gamma}^{\,\prime}(\hattheta_n, \hatgamma_n, \hat{\bargamma}_n - \hatgamma_n) ].
\end{align*}
According to Conditions \eqref{eq:analytical-AN} and \eqref{eq:analytical-op1}, it is sufficient to show the following
\begin{gather*}
	\whatG_{n,\gamma}^{\,\prime}(\theta_0, \gamma_0, \hat{\bargamma}_n - \hatgamma_n) - \whatG_{n,\gamma}^{\,\prime}(\hattheta_n, \hatgamma_n, \hat{\bargamma}_n - \hatgamma_n) = \oP(n^{-1/2}), \quad \Bnl - \whatBnl = \oP(n^{-1/2}).
\end{gather*}

First, triangle inequality and Cauchy-Schwartz inequality imply that
\begin{align*}
	& \bbE\big( \big\| \whatG_{n,\gamma}^{\,\prime}(\theta_0, \gamma_0, \hat{\bargamma}_n - \hatgamma_n) - \whatG_{n,\gamma}^{\,\prime}(\hattheta_n, \hatgamma_n, \hat{\bargamma}_n - \hatgamma_n) \big\| \big) \\
	=\,& \bbE\Big( \big\| \frac{1}{n} \sum_{i=1}^n \big( \bbD_{\gamma} g(z_i,\theta_0,\gamma_0) - \bbD_{\gamma} g(z_i,\hattheta_n,\hatgamma_n) \big) \, \vect\big( \hat{\bargamma}_n(z_i) - \hatgamma_n(z_i) \big) \big\| \Big) \\
	\leq\,& \frac{1}{n} \sum_{i=1}^n \bbE\Big( \big\| \big( \bbD_{\gamma} g(z_i,\theta_0,\gamma_0) - \bbD_{\gamma} g(z_i,\hattheta_n,\hatgamma_n) \big) \, \vect\big( \hat{\bargamma}_n(z_i) - \hatgamma_n(z_i) \big) \big\| \Big) \\
	=\,&  \bbE\Big( \big\| \big( \bbD_{\gamma} g(z,\theta_0,\gamma_0) - \bbD_{\gamma} g(z,\hattheta_n,\hatgamma_n) \big) \, \vect\big( \hat{\bargamma}_n(z) - \hatgamma_n(z) \big) \big\| \Big) \\
	\leq\,& \Big( \bbE\big( \| \bbD_{\gamma} g(z,\theta_0,\gamma_0) - \bbD_{\gamma} g(z,\hattheta_n,\hatgamma_n) \|^2 \big) \, \bbE\big( \| \hat{\bargamma}_n(z) - \hatgamma_n(z) \|^2 \big) \Big)^{1/2} \\
	\leq\,& C n^{- (r \wedge s)} \times n^{-t} = C n^{- ( r \wedge s + t ) } = o(n^{-1/2}).
\end{align*}


Second, note that the term $\Bnl - \whatBnl $ writes as
\begin{align*}
	\frac{1}{n^{1+2r}} \sum_{i=1}^n \big\{ \big[ n^{2r} \vect\big( \hatgamma_n(z_i) - \bargamma_n (z_i)  \big)^{\otimes 2} - \vect\big( \hatV_n(z_i) \big)  \big] \otimes I_{d_g} \big\}^\intercal \, \vect\big( \bbD_{\gamma\gamma}^2 g(z_i,\theta_0,\gamma_0) \big)
\end{align*}
A similar argument yields that
\begin{align*}
	& \bbE[ \| \Bnl - \whatBnl \| ] \\	
	\leq\,& \frac{1}{n^{2r}} \bbE \Big( \big\| \big[ n^{2r} \vect\big( \hatgamma_n(z) - \bargamma_n (z)  \big)^{\otimes 2} - \vect\big( \hatV_n(z) \big) \big] \otimes I_{d_g} \big\}^\intercal \, \vect\big( \bbD_{\gamma\gamma}^2 g(z,\theta_0,\gamma_0) \big)  \big\| \Big) \\
	\leq\,& \frac{C}{n^{2r}} \Big( \bbE\big( \big\| n^{2r} \vect\big( \hatgamma_n(z) - \bargamma_n (z)  \big)^{\otimes 2} - \vect\big( \hatV_n(z) \big) \big\|^2 \big) \, \bbE\big( \big\| \bbD_{\gamma\gamma}^2 g(z,\theta_0,\gamma_0) \big\|^2 \big) \Big)^{1/2} \\
	\leq\,& C n^{-2r - v} = o(n^{-1/2}).
\end{align*}
This completes the proof.

\subsection{Proof of Theorem \ref{thm:discontinuous} }

Recall that, in this case, the functional $g$ is a smoothed projection of $\check{g}$ on some sub-$\sigma$-algebra of the $\sigma$-algebra generated by the sample. 

(i) Let $\hattheta_n$ be the corresponding estimator defined by $g$. Under Assumption \ref{asmp:AL'} and those conditions of Lemma \ref{lem:op1}, we obtain
\begin{align*}
	& \check{\theta}_n - \theta_0 = (\check{\theta}_n - \hattheta_n) + (\hattheta_n - \theta_0) \\
	=\,& \calJ_n \Big( \widecheck{G}_n(\theta_0, \hatgamma_n) - \whatG_n(\theta_0, \hatgamma_n) + \whatG_n(\theta_0, \gamma_0) + \whatG_{n,\gamma}^{\,\prime}(\theta_0,\gamma_0, \hatgamma_n - \bargamma_n) \\
		& + \Banb + \Bnl \Big) + \oP(n^{-1/2}).
\end{align*}
Assumption \ref{asmp:AN'}, $\calJ_n - \calJ_0 = \OP\big( \whatG_n(\theta_0, \hatgamma_n) \big)$=$\oP(1)$ (recall that $\whatG_n(\theta_0, \hatgamma_n) \conP G(\theta_0, \gamma_0)=0$), and those conditions on $\underline{s}$ and $r$ further imply that
\begin{align*}
	& \sqrt{n} \big( \check{\theta}_n - \theta_0 - \calJ_n \Bnl - \calJ_n \Banb \big) \\
	=\,& \sqrt{n} \calJ_n \Big( \widecheck{G}_n(\theta_0, \hatgamma_n) - \whatG_n(\theta_0, \hatgamma_n) + \whatG_n(\theta_0, \gamma_0) + \whatG_{n,\gamma}^{\,\prime}(\theta_0,\gamma_0, \hatgamma_n - \bargamma_n)  \Big) + \oP(1) \\
	=\,& \sqrt{n} \calJ_0 \Big( \widecheck{G}_n(\theta_0, \hatgamma_n) - \whatG_n(\theta_0, \hatgamma_n) + \whatG_n(\theta_0, \gamma_0) + \whatG_{n,\gamma}^{\,\prime}(\theta_0,\gamma_0, \hatgamma_n - \bargamma_n)  \Big) + \oP(1) \\
	\conL \,& \calN(0, \calJ_0 \, \widecheck{\Sigma}_g \, \calJ_0^\intercal).
\end{align*}

(ii) As shown in the proof of part (i), the additional term $\check{\theta}_n - \hattheta_n$ does not make an essential difference with the continuous functional case under Assumption \ref{asmp:AN'}. In view of this, the proof for the multi-scale jackknife estimator is quite similar to the proof of Theorem \ref{thm:jackknifeCLT}. Hence, we omit it here to save space.

\bigskip

\section{Additional Simulation Results} \label{sec:addsim}

\subsection{Average Density (AD) Estimator} 

Recall in the main-text that the nonlinear bias for the AD estimator is identically zero: $\Bnl\equiv 0$. The averaged nonparametric bias is given by:
\begin{align*}
    \Banb = \frac{1}{n} \sum_{i=1}^n [\bargamma_n(z_i) - \gamma_0(z_i)] = \frac{1}{n} \sum_{i=1}^n \int K(u) [\gamma_0(z_i - hu) - \gamma_0(z_i)] du = \OP( h^m ).
\end{align*}
It is estimated by
\begin{align*}
	\whatBanb = \frac{1}{n} \sum_{i=1}^n [ \hat{\bargamma}_n(z_i) - \hatgamma_n(z_i)] = \frac{1}{n} \sum_{i=1}^n \frac{1}{n-1} \sum_{j\neq i} \tilK_h(z_j - z_i),
\end{align*}
where $\tilK_h(u) = \frac{1}{h^{d_z}} \tilK(u/h)$ and $\tilK(u) = 2 K(u) - \int K(u-v) K(v) dv$ is the twicing kernel studied by \cite{Stuetzle&Mittal:1979} and \cite{NHR:2004}.

Figures \ref{fig:AD50MSE} and \ref{fig:AD50CI} present the decomposition of MSE and the empirical coverage rates of confidence intervals for the case $n=50$. We also plot in Figure \ref{fig:AD50Density} the densities of the $t$-statistics $\textstyle \sqrt{n} ( \hattheta_n - \theta_0 ) / \sqrt{\Var(\hattheta_n)}$ for different estimators (bias-corrected or not) at several selected bandwidth. As shown in the figure, the locations of those densities shift away from the standard normal density as bias becomes large (in magnitude).

\begin{figure}[!htbp]
\centering
\includegraphics[width=1\textwidth]{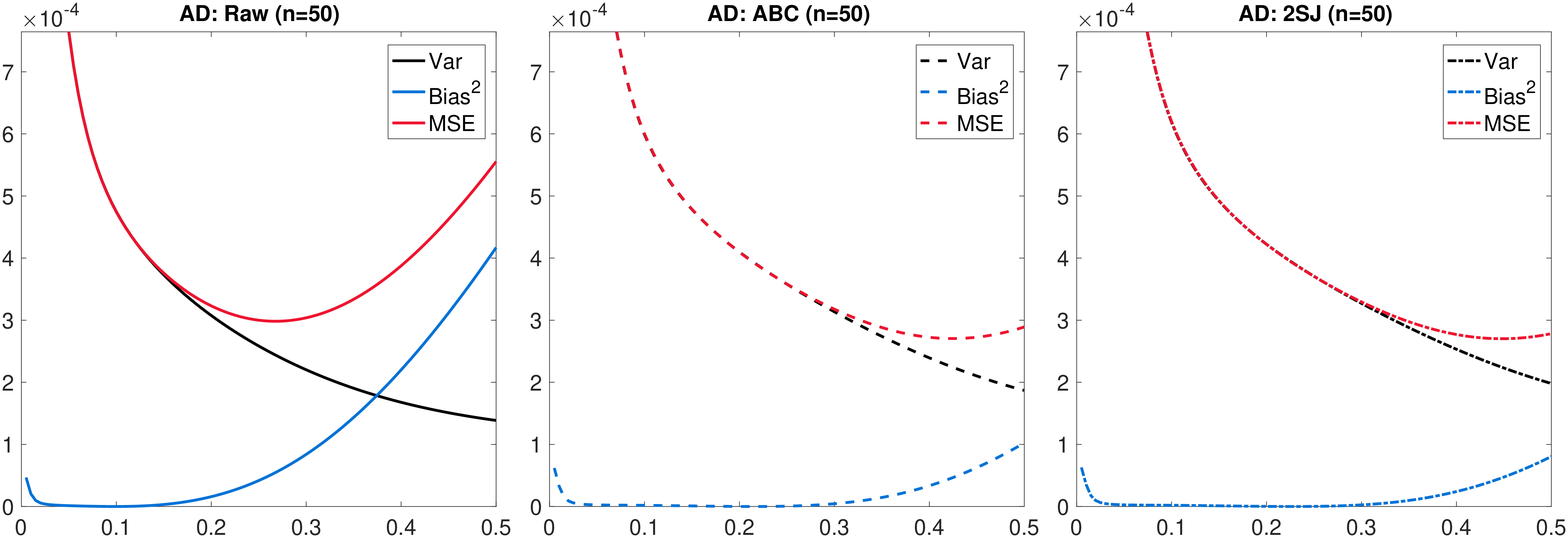}
\caption{AD: Decomposition of Mean Squared Error}
\label{fig:AD50MSE}
\end{figure}

\begin{figure}[!htbp]
\centering
\includegraphics[width=1\textwidth]{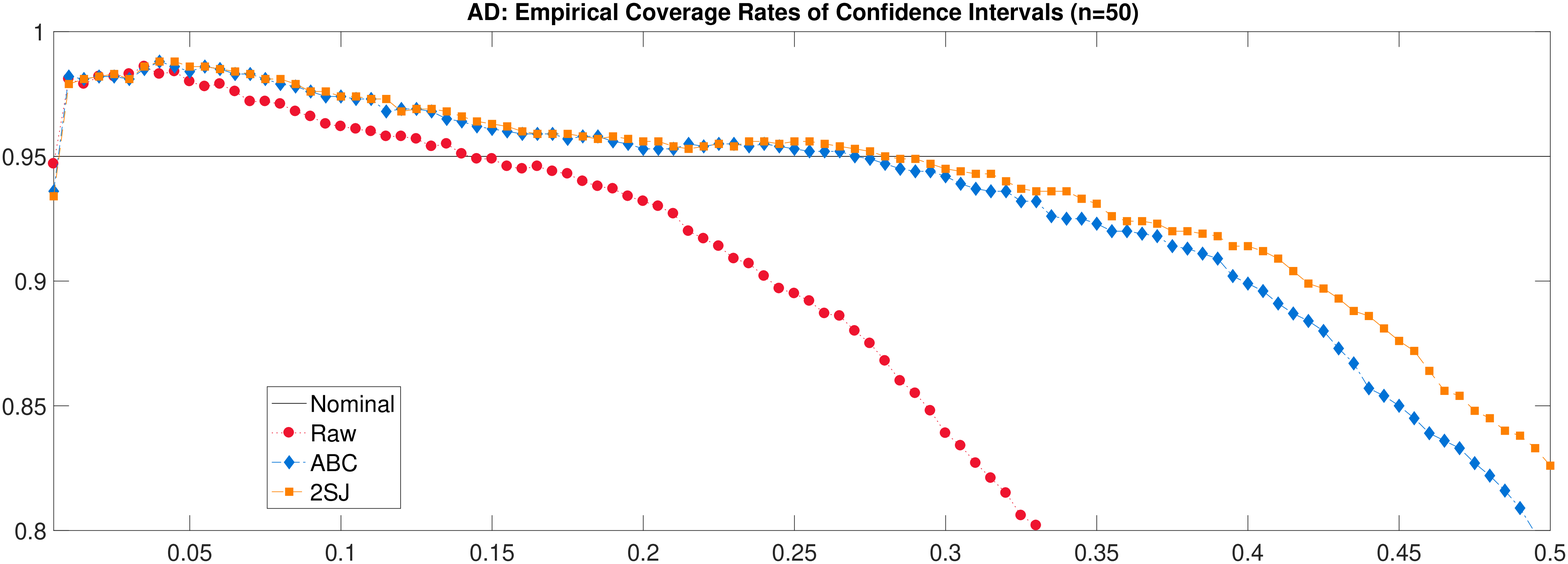}
\caption{AD: Empirical Coverage Rates of Confidence Intervals}
\label{fig:AD50CI}
\end{figure}

\begin{figure}[!htbp]
\centering
\includegraphics[width=1\textwidth]{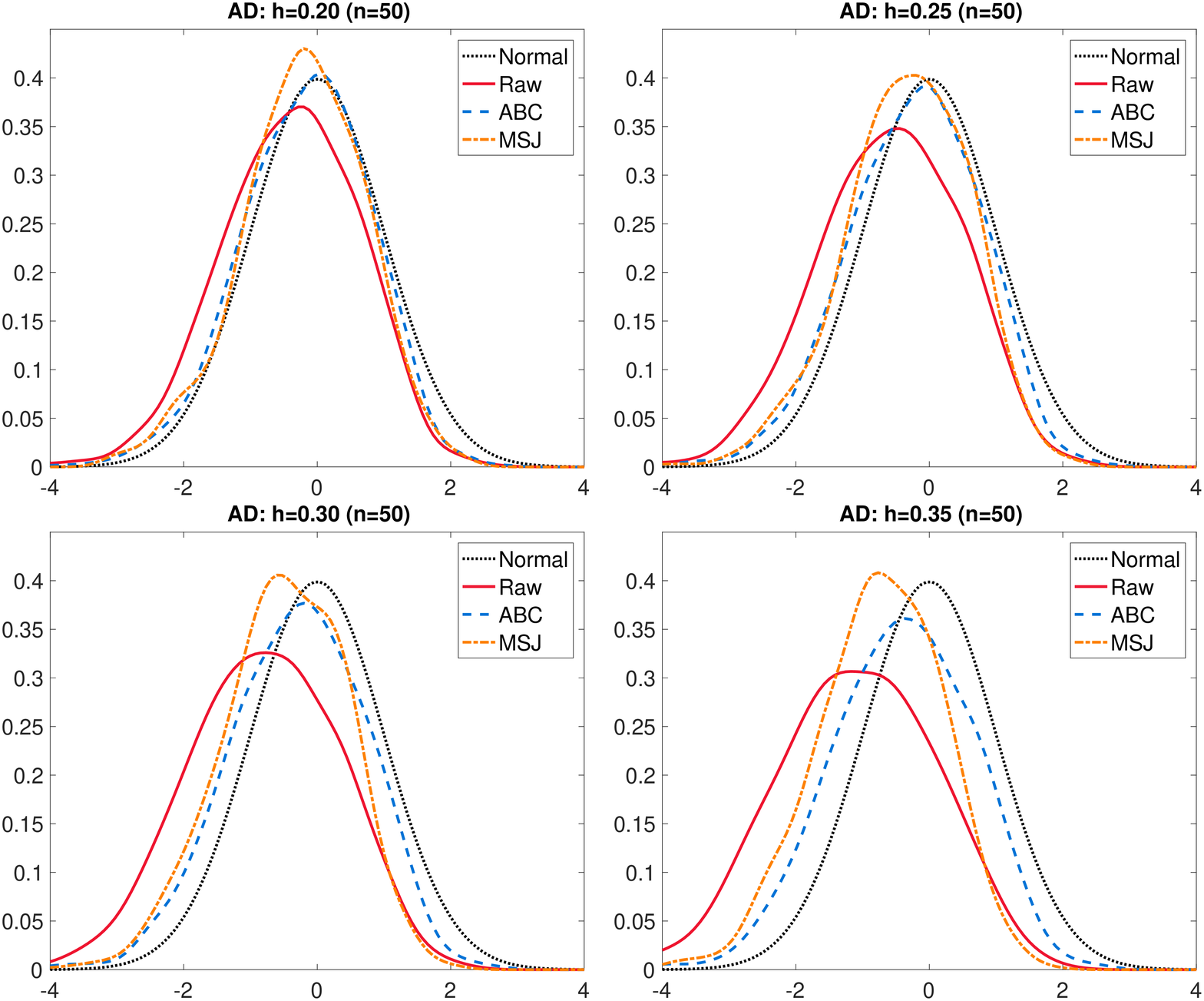}
\caption{AD: Densities of t-statistics and standard normal R.V.}
\label{fig:AD50Density}
\end{figure}

Figures \ref{fig:AD200MSE}, \ref{fig:AD200CI}, and \ref{fig:AD200Density} summarize the results for the case $n=200$. Although the general pattern remains the same, it is clear that now the range of bandwidths with correct coverage rage shrinks. This is mainly because the \textquotedblleft reasonable choice \textquotedblright{} of bandwidth decreases as the sample size increase, as indicated by the relation $h=n^{-\kappa}$ with $\kappa >0$.

\begin{figure}[!htbp]
\centering
\includegraphics[width=1\textwidth]{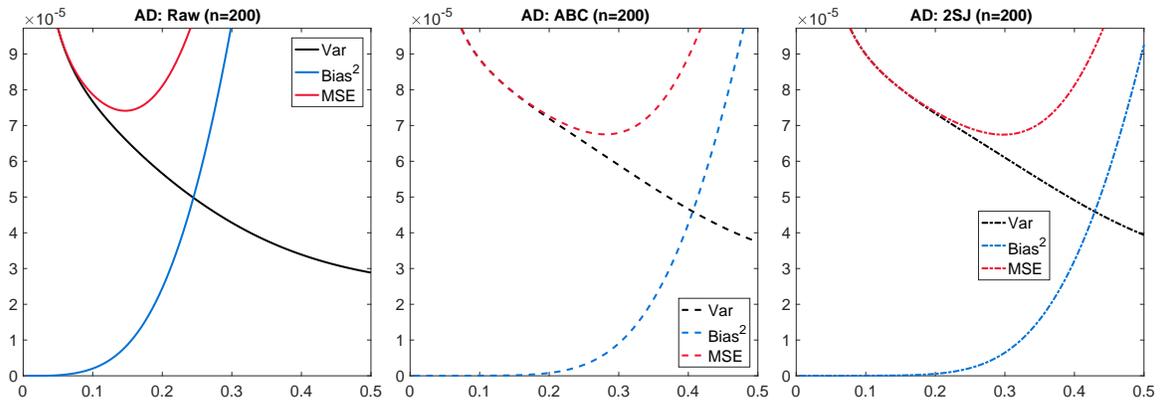}
\caption{AD: Decomposition of Mean Squared Error}
\label{fig:AD200MSE}
\end{figure}

\begin{figure}[!htbp]
\centering
\includegraphics[width=1\textwidth]{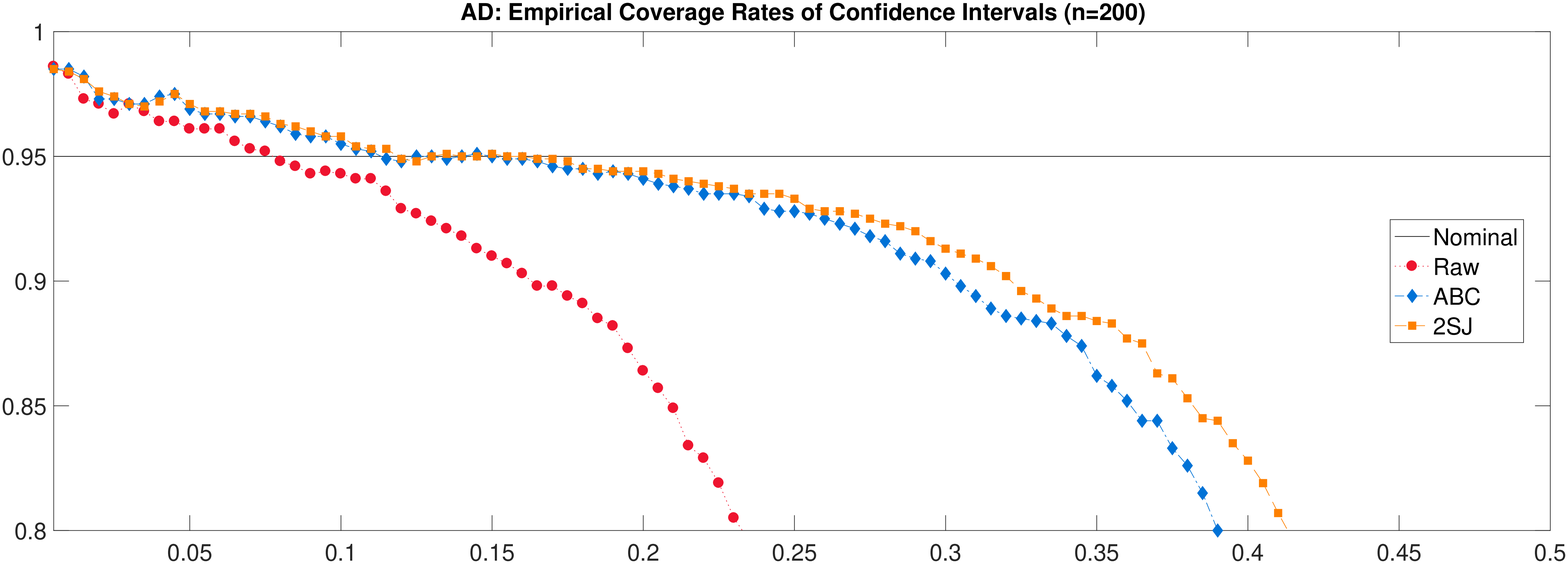}
\caption{AD: Empirical Coverage Rates of Confidence Intervals}
\label{fig:AD200CI}
\end{figure}

\begin{figure}[!htbp]
\centering
\includegraphics[width=1\textwidth]{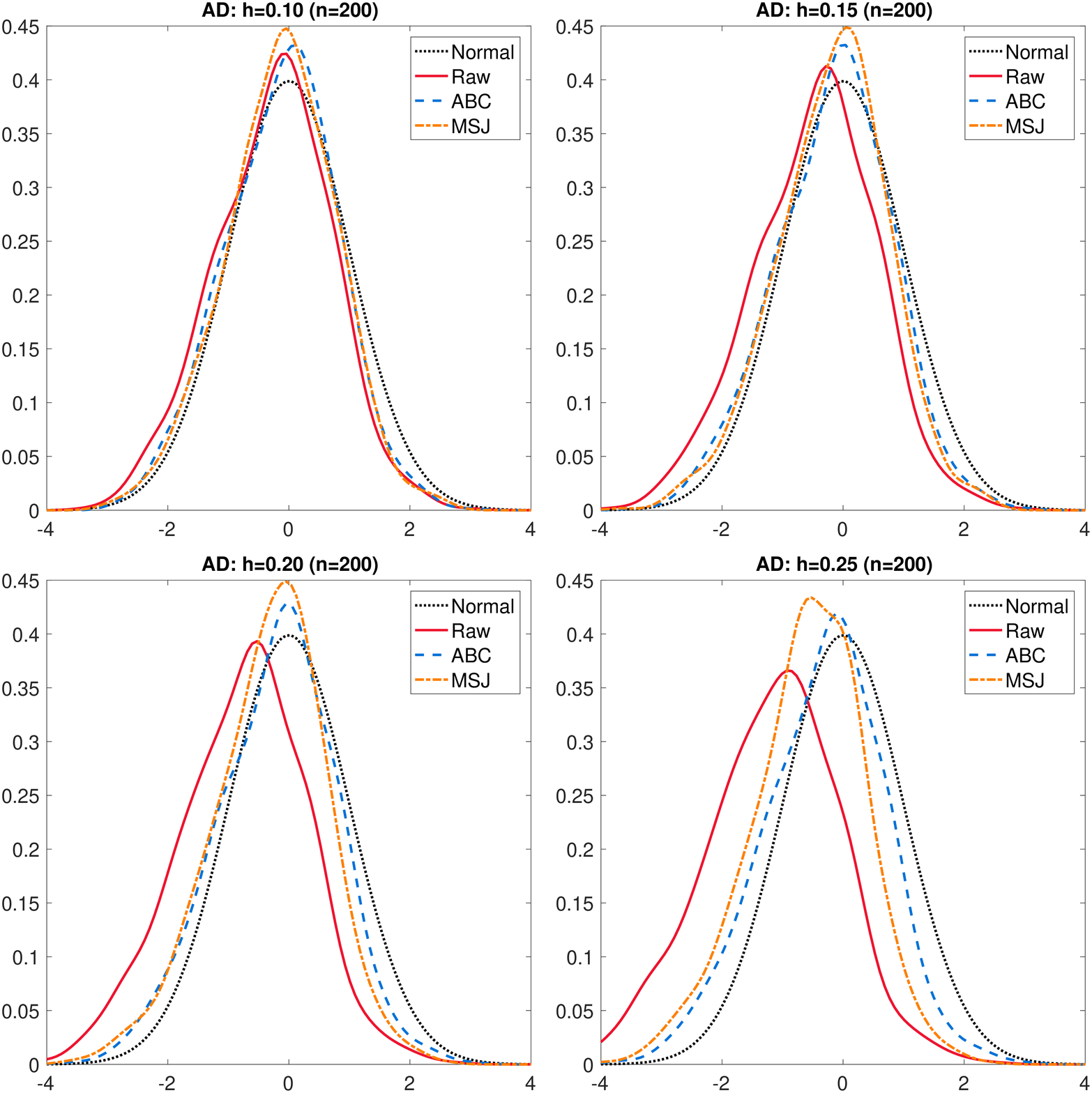}
\caption{AD: Densities of t-statistics and standard normal R.V.}
\label{fig:AD200Density}
\end{figure}

The results for $n=1000$ are given in Figures \ref{fig:AD1000MSE}, \ref{fig:AD1000CI}, and \ref{fig:AD1000Density}. Since the bandwidth $h$ decreases as the sample size gets larger, we only show the results for $h\leq 0.3$ in this case. 

\begin{figure}[!htbp]
\centering
\includegraphics[width=1\textwidth]{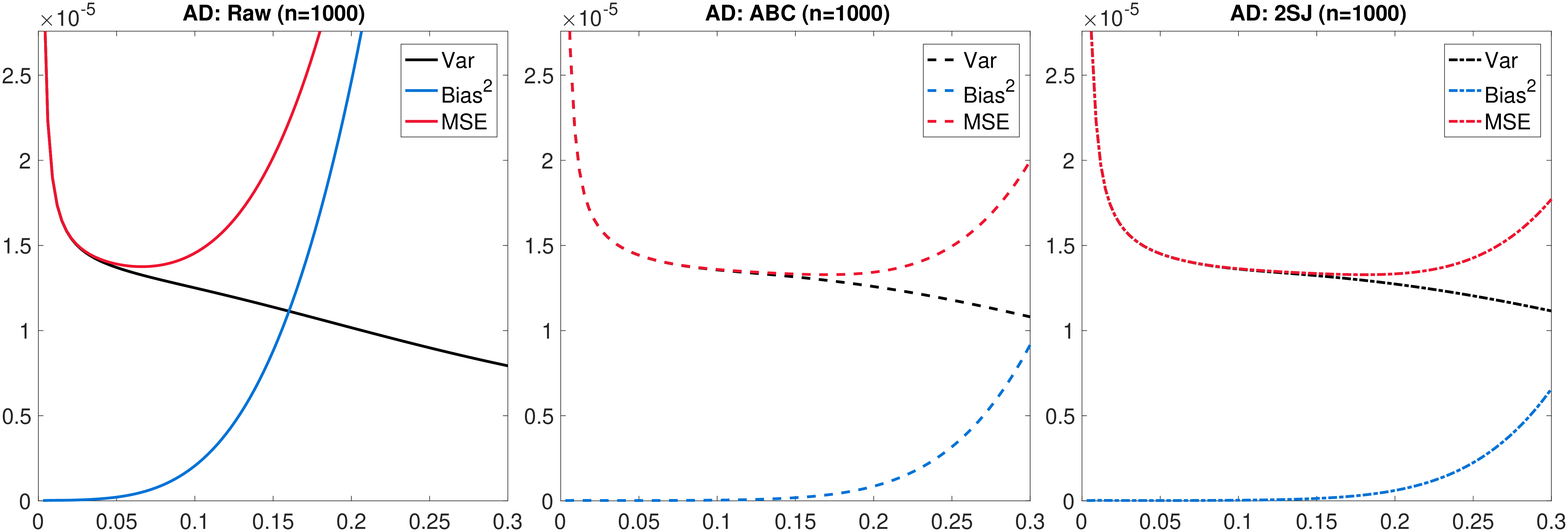}
\caption{AD: Decomposition of Mean Squared Error}
\label{fig:AD1000MSE}
\end{figure}

\begin{figure}[!htbp]
\centering
\includegraphics[width=1\textwidth]{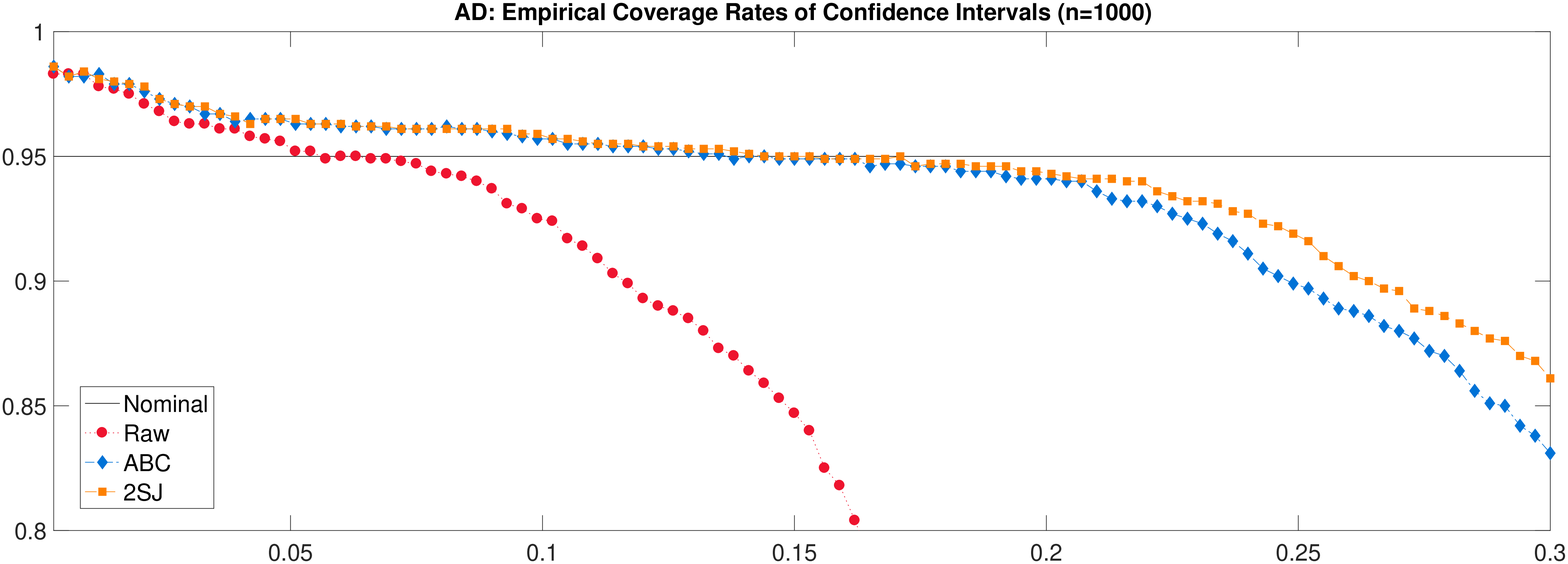}
\caption{AD: Empirical Coverage Rates of Confidence Intervals}
\label{fig:AD1000CI}
\end{figure}

\begin{figure}[!htbp]
\centering
\includegraphics[width=1\textwidth]{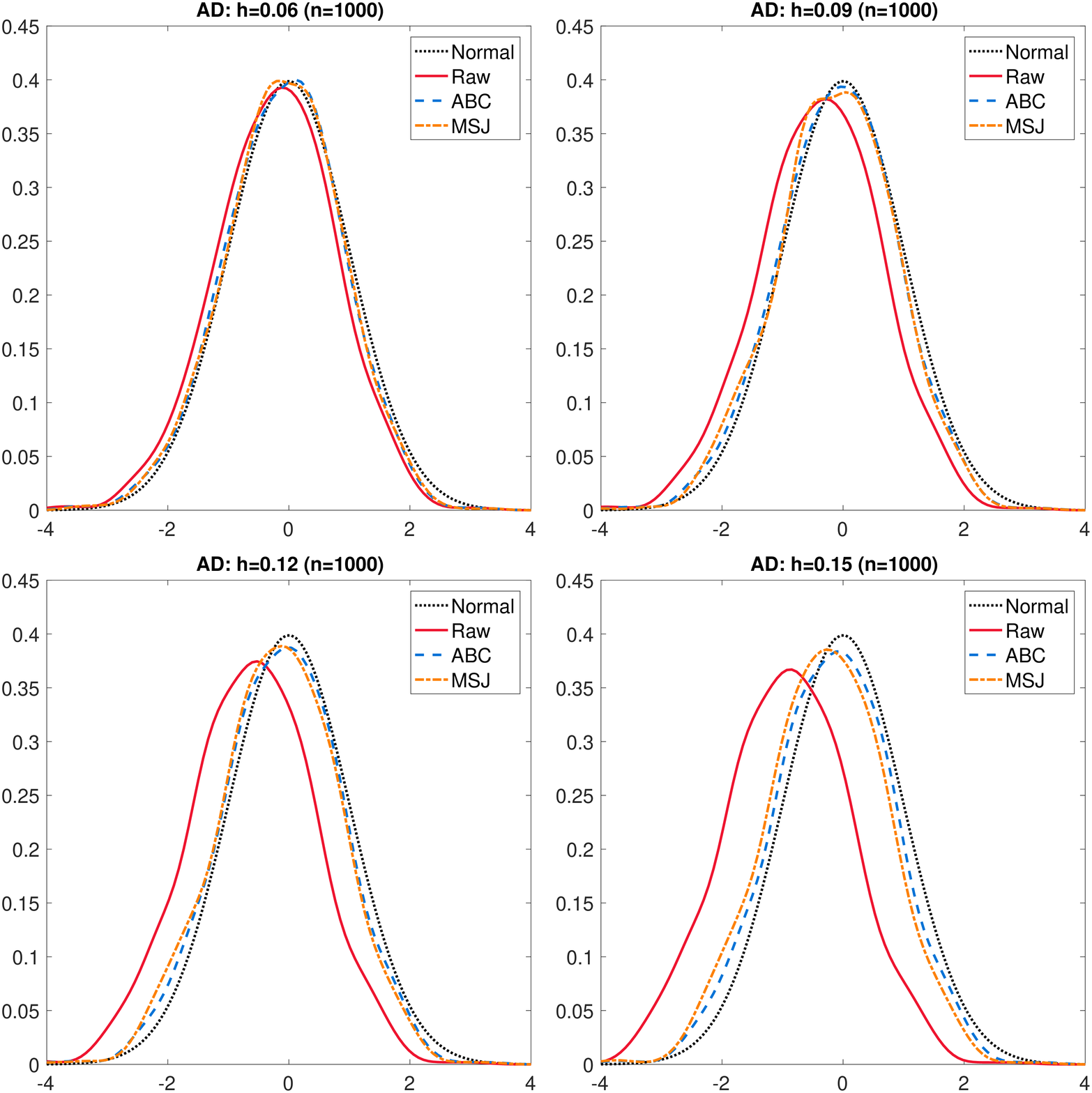}
\caption{AD: Densities of t-statistics and standard normal R.V.}
\label{fig:AD1000Density}
\end{figure}

\vfill
\newpage

\subsection{Integrated Squared Density (ISD) Estimator} 

In the main-text, we have obtain the following expressions of the two biases:
\begin{align*}
	 \Bnl &= \int [ \hatgamma_n(x) - \bargamma_n(x) ]^2 dx, \\
	 \Banb &= 2 \int \gamma_0(x) [\bargamma_n(x) - \gamma_0(x)] dx.
\end{align*}

It can be shown that
\begin{align*}
	& [ \hatgamma_n(x) - \bargamma_n(x) ]^2 \\
	=\,& \frac{1}{n^2} \sum_{i,j=1}^n K_h(x-z_i) K_h(x-z_j) - \frac{2}{n} \sum_{i=1}^n K_h(x-z_i) \int K(u) \gamma_0(x-hu) du \\
		&\quad + \Big( \int K(u) \gamma_0(x-hu) du \Big)^2 \\
	=\,& \frac{1}{n^2} \sum_{i=1}^n [K_h(x-z_i)]^2  + \frac{1}{n^2} \sum_{i,j=1}^n K_h(x-z_i) K_h(x-z_j) \\
		&\quad - \frac{2}{n} \sum_{i=1}^n K_h(x-z_i) \int K(u) \gamma_0(x-hu) du + \Big( \int K(u) \gamma_0(x-hu) du \Big)^2.
\end{align*}
It then follows that
\begin{align*}
	\Bnl = \frac{1}{n^2} \sum_{i=1}^n \int [K_h(x-z_i)]^2 dx + \oP\Big( \frac{1}{nh^{d_z}}\Big),
\end{align*}
where the first term is of order $1/(nh^{d_z})$. Hence, we estimate the nonlinear bias by
\begin{align*}
	\whatBnl = \frac{1}{n^2} \sum_{i=1}^n \int [K_h(x-z_i)]^2 dx.
\end{align*}

As for the averaged nonparametric bias, we estimate it by
\begin{align*}
	\whatBanb = 2 \int \hatgamma_n(x) [ \hat{\bargamma}_n(x) - \hatgamma_n(x) ] dx,
\end{align*}
where $\hat{\bargamma}_n$ is constructed in the same way as in the previous subsection. 

Since we used Gaussian kernel in the simulation, all the above integrals are calculated over the interval $[\min(z_i) - 4, \max(z_i) +4]$ with 500 grids. In Matlab, one can also use the function \texttt{vpaintegral}, but the computation time is significantly longer. 

The results for $n=50,200,$ and 1000 are given below. Here we used a five-scale jackknife. Since the odd moments of a Gaussian kernel with zero mean are all zero, the \textquotedblleft over-smoothing\textquotedblright{} biases, are only non-zero for even orders. In view if this, the weights are given as below:
\begin{align*}
	\left( \begin{matrix}
		w_1 \\
		w_2 \\
		w_3 \\
		w_4 \\
		w_5
	\end{matrix} \right) = \left( \begin{matrix}
		1 & 1 & 1 & 1 & 1 \\
		\eta_1^2 & \eta_2^2 & \eta_3^2 & \eta_4^2 & \eta_5^2 \\
		\eta_1^4 & \eta_2^4 & \eta_3^4 & \eta_4^4 & \eta_5^4 \\
		\eta_1^6 & \eta_2^6 & \eta_3^6 & \eta_4^6 & \eta_5^6 \\
		\eta_1^{-1} & \eta_2^{-1} & \eta_3^{-1} & \eta_4^{-1} & \eta_5^{-1} \\
	\end{matrix} \right)^{-1} \left( \begin{matrix}
		1 \\
		0 \\
		0 \\
		0 \\
		0
	\end{matrix} \right).
\end{align*}
We set $\eta=(3/5,4/5,1,6/5,7/5)$ in the simulation.

\begin{figure}[!htbp]
\centering
\includegraphics[width=1\textwidth]{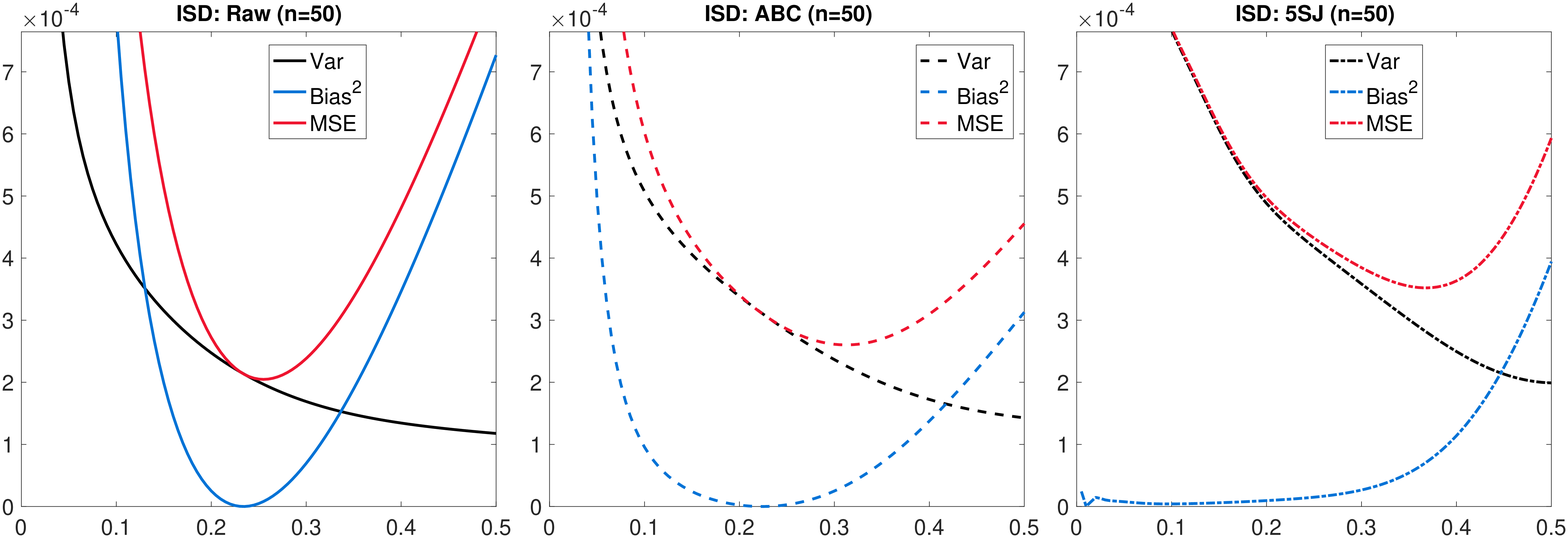}
\caption{ISD: Decomposition of Mean Squared Error}
\label{fig:ISD50MSE}
\end{figure}

\begin{figure}[!htbp]
\centering
\includegraphics[width=1\textwidth]{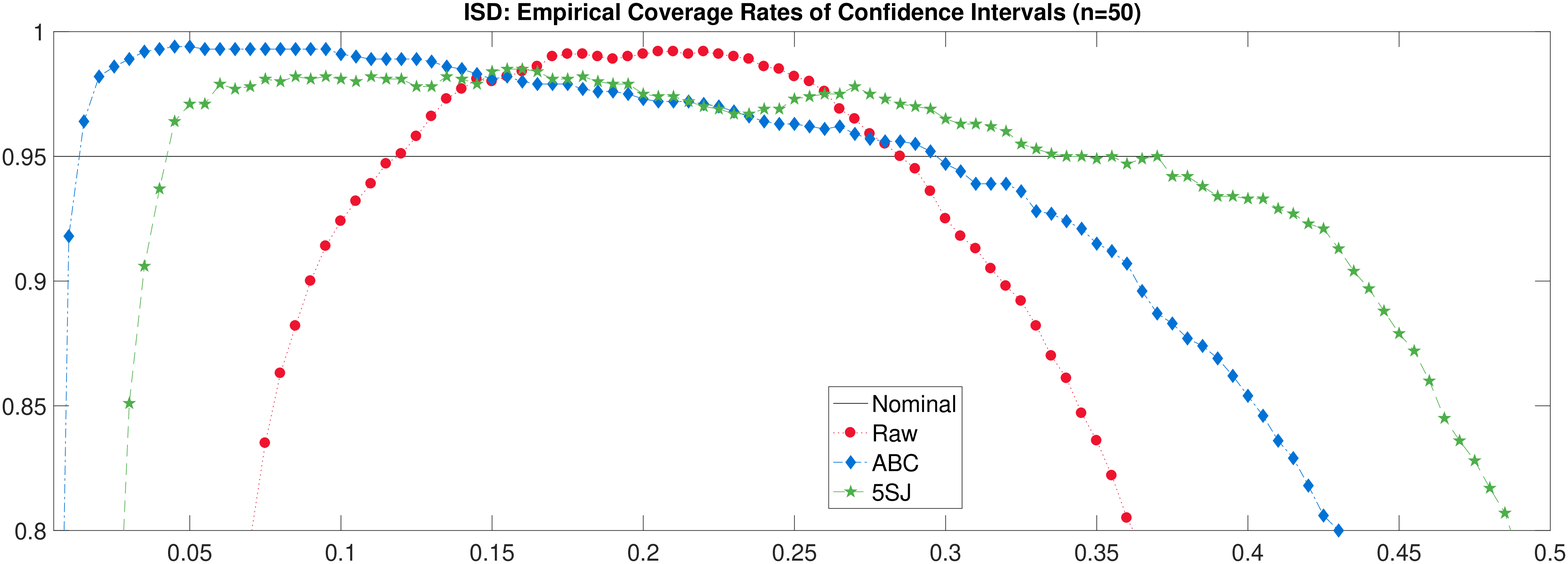}
\caption{ISD: Empirical Coverage Rates of Confidence Intervals}
\label{fig:ISD50CI}
\end{figure}

\begin{figure}[!htbp]
\centering
\includegraphics[width=1\textwidth]{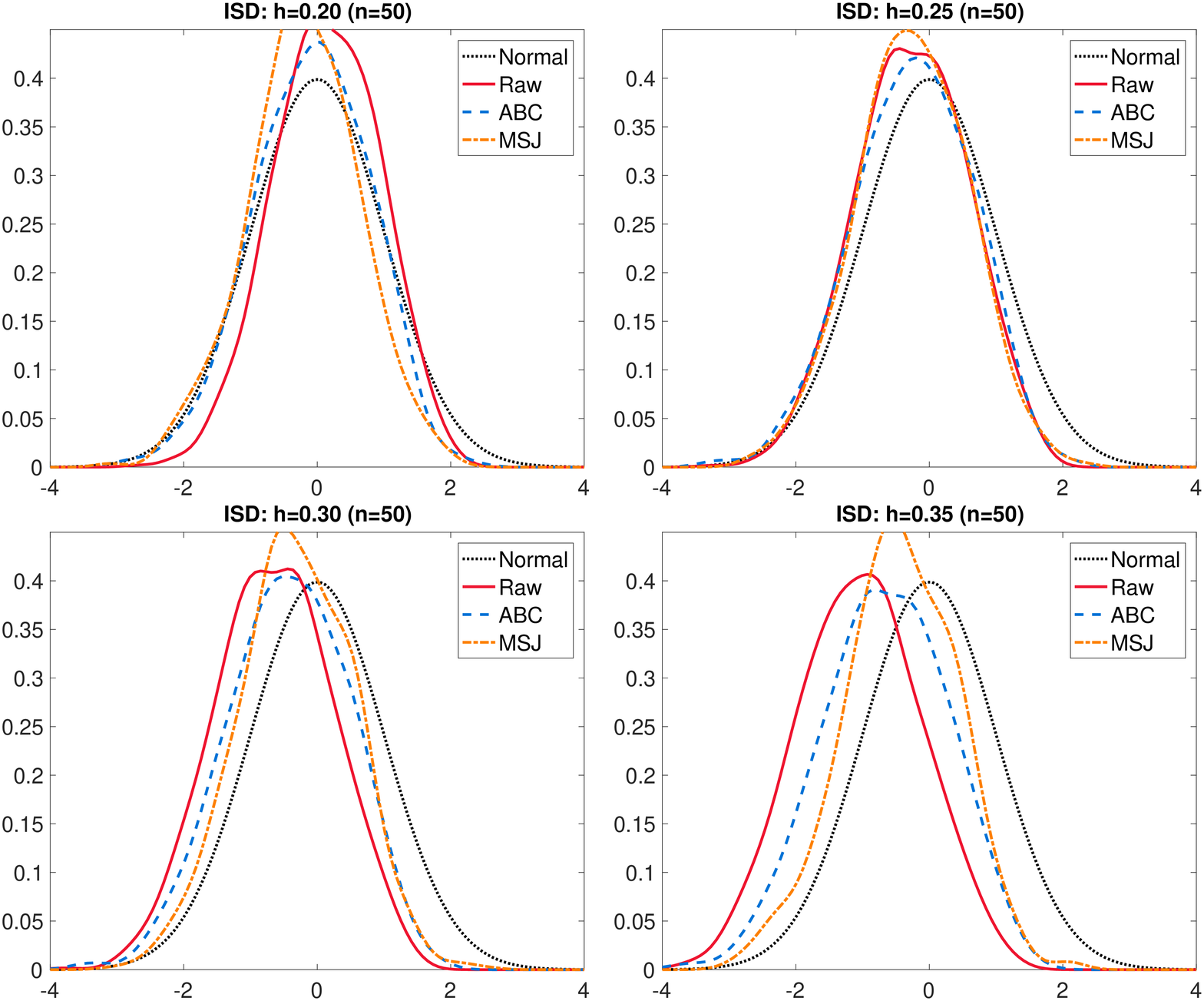}
\caption{ISD: Densities of t-statistics and standard normal R.V.}
\label{fig:ISD50Density}
\end{figure}

\begin{figure}[!htbp]
\centering
\includegraphics[width=1\textwidth]{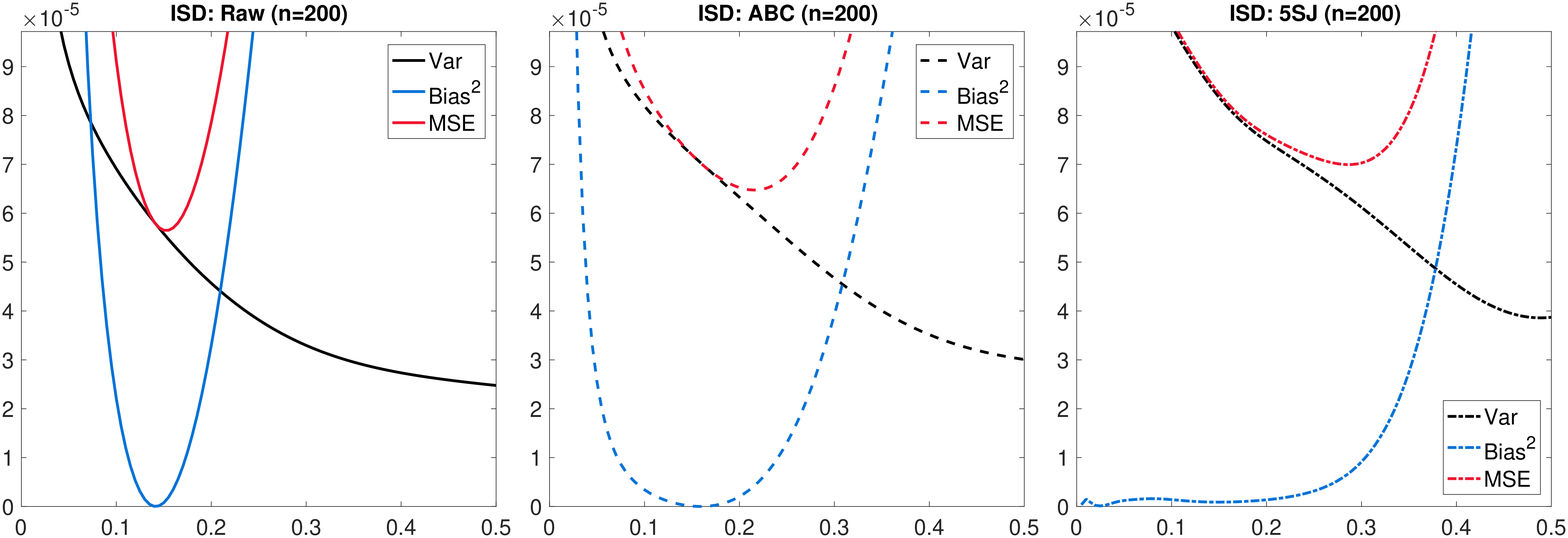}
\caption{ISD: Decomposition of Mean Squared Error}
\label{fig:ISD200MSE}
\end{figure}

\begin{figure}[!htbp]
\centering
\includegraphics[width=1\textwidth]{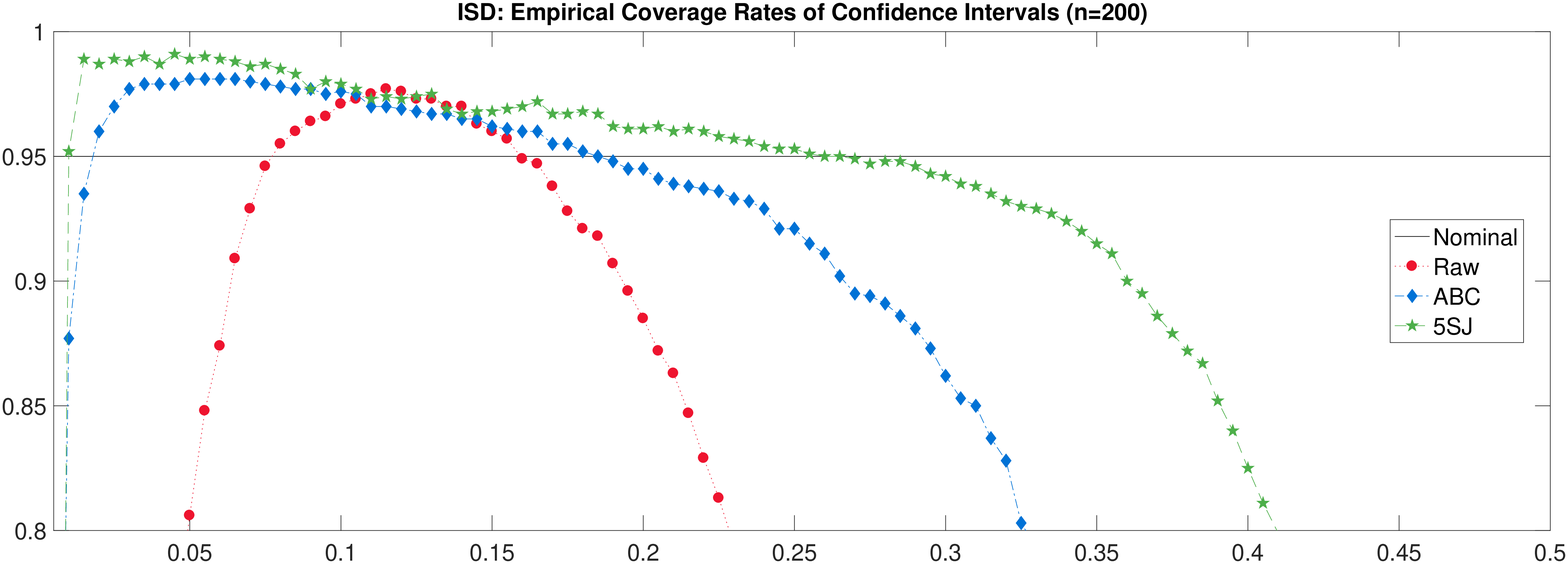}
\caption{ISD: Empirical Coverage Rates of Confidence Intervals}
\label{fig:ISD200CI}
\end{figure}

\begin{figure}[!htbp]
\centering
\includegraphics[width=1\textwidth]{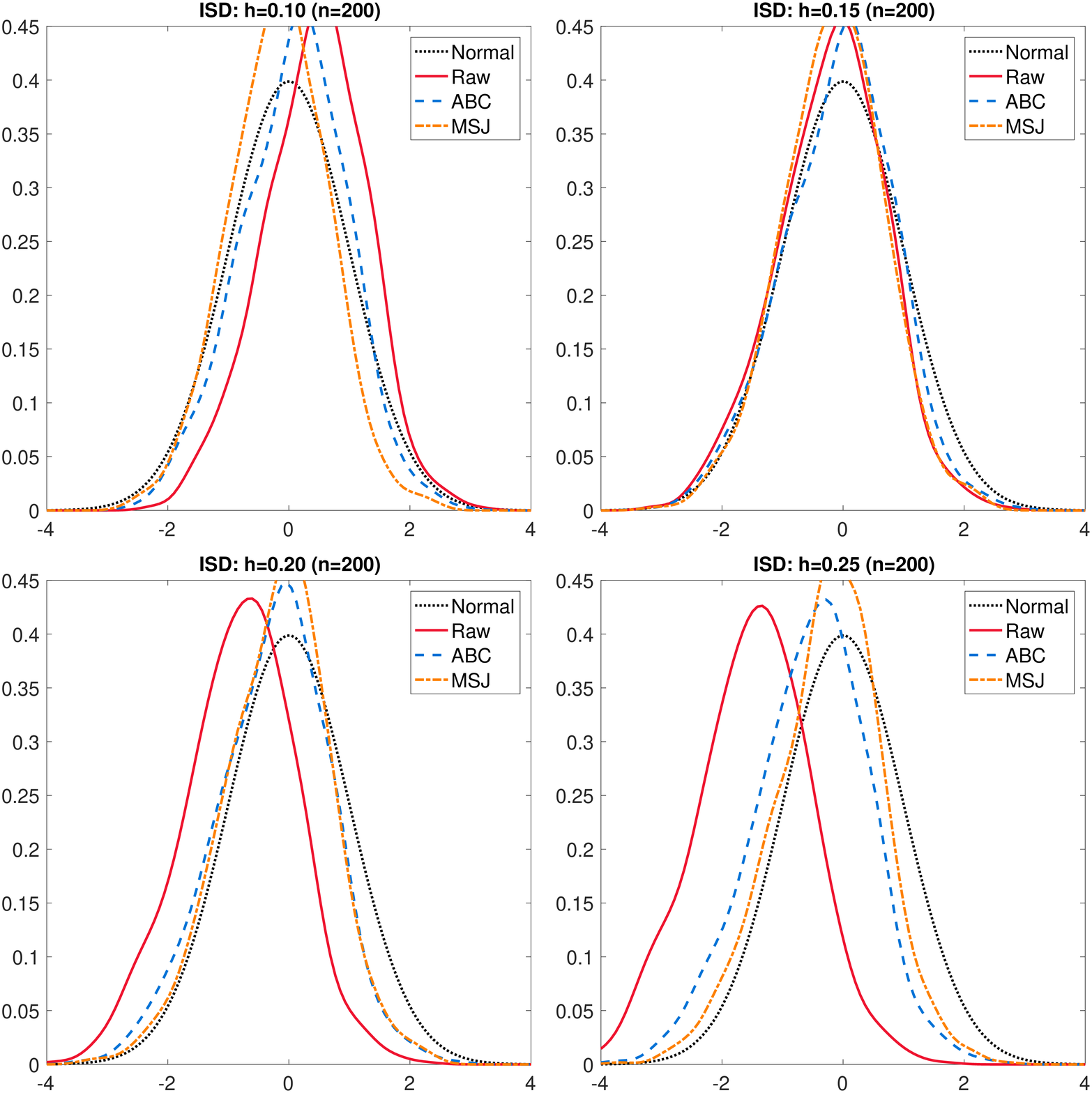}
\caption{ISD: Densities of t-statistics and standard normal R.V.}
\label{fig:ISD200Density}
\end{figure}

\begin{figure}[!htbp]
\centering
\includegraphics[width=1\textwidth]{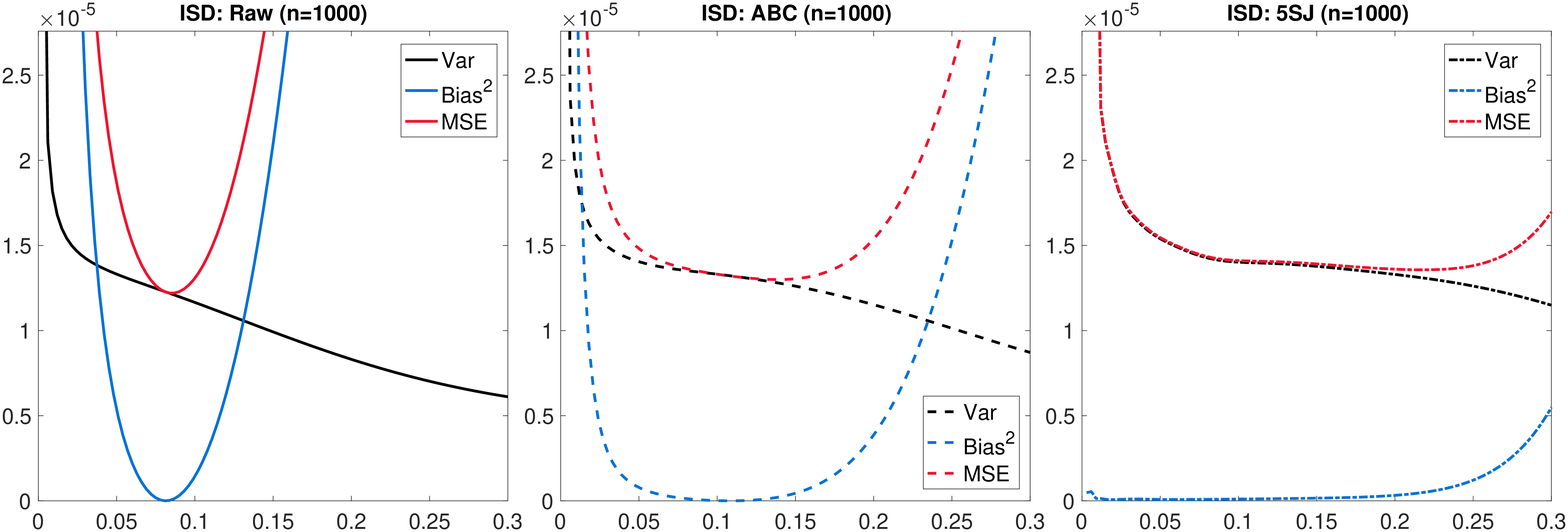}
\caption{ISD: Decomposition of Mean Squared Error}
\label{fig:ISD1000MSE}
\end{figure}

\begin{figure}[!htbp]
\centering
\includegraphics[width=1\textwidth]{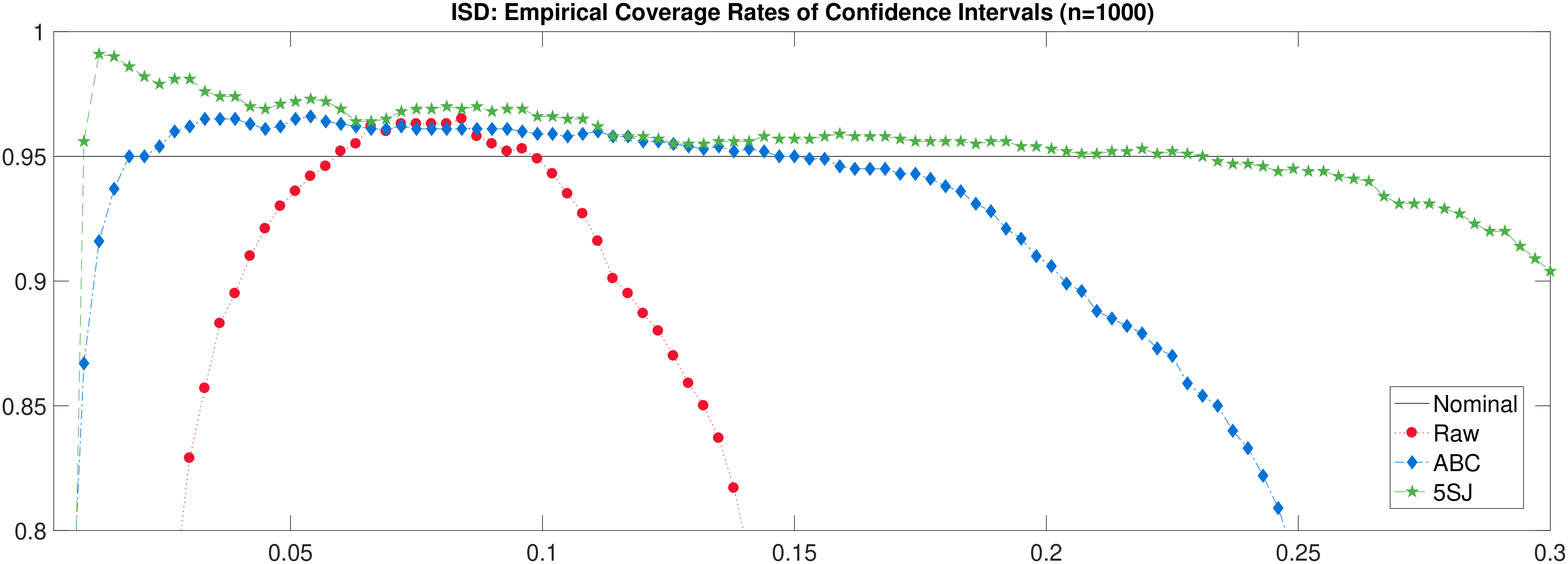}
\caption{ISD: Empirical Coverage Rates of Confidence Intervals}
\label{fig:ISD1000CI}
\end{figure}

\begin{figure}[!htbp]
\centering
\includegraphics[width=1\textwidth]{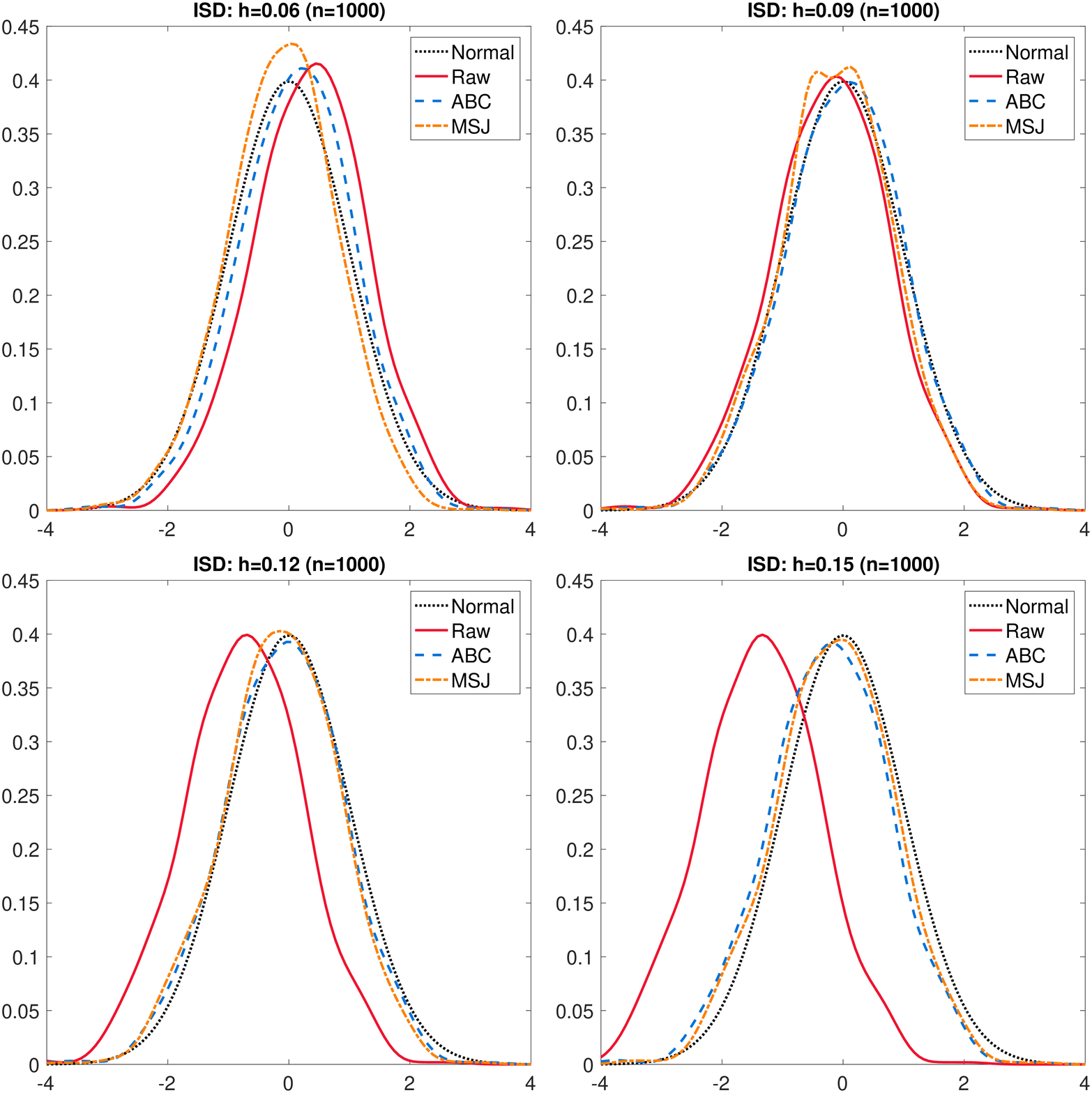}
\caption{ISD: Densities of t-statistics and standard normal R.V.}
\label{fig:ISD1000Density}
\end{figure}

\subsection{Density-Weighted Average Derivative (DWAD) Estimator} 

The DWAD estimator, which corresponds to the case $|\bar{\lambda}|=1$ and $w=2$ in \cite{NHR:2004}, is given by ($f$ is the density of $x_i$)
\begin{align*}
	\hattheta_n = - \frac{2}{n} \sum_{i=1}^n \frac{\partial \hat{f}}{\partial x}(x_i) \, y_i,   
\end{align*}
where
\begin{align*}
	 \frac{\partial \hat{f}}{\partial x}(x_i) = \frac{1}{(n-1) h^{d}} \frac{-1}{h} \sum_{j\neq i} K'\Big( \frac{x_j - x_i}{h} \Big).
\end{align*}
Therefore, we get
\begin{align*}
	\hattheta_n = - \frac{2}{n} \sum_{i=1}^n \frac{\partial \hat{f}}{\partial x}(x_i) \, y_i = \frac{2}{n(n-1)} \sum_{i=1}^n \sum_{j\neq i} \frac{1}{h^{d+1}} K'\Big( \frac{x_j - x_i}{h} \Big) \, y_i.
\end{align*}

In this case, we have $\hatgamma_n = \partial \hat{f} / \partial x$. It can be shown that
\begin{align*}
	\whatBnl = \hat{\bar{\theta}}_n - \hattheta_n, \,\,\text{where }  \hat{\bar{\theta}}_n = - \frac{2}{n(n-1)} \sum_{i=1}^n \sum_{j\neq i} K_h( x_j - x_i) \frac{\partial \hat{f}}{\partial x}(x_j) \, y_i.
\end{align*} 
If one choose the density of $\calN(0,I_d)$ as the kernel function, then the equivalent kernel used in $\hat{\bar{\theta}}_n$ is $\calN(0, 2 I_d)$. This means that the equivalent kernel for $\whatBnl$ is essentially a twicing kernel.

The simulation results for $n=50,200,$ and 1000 are give below. We note that the coverage rates in the case $n=1000$ is slightly higher than the nominal level when $h$ is small. A possible explanation is that the variance correction term provided by \cite{Cattaneo&Crump&Jansson:2014} (Case (b) of Theorem 2 therein) is of order $1/n$. Hence its correction effect becomes smaller when the sample size increases.

\begin{figure}[!htbp]
\centering
\includegraphics[width=1\textwidth]{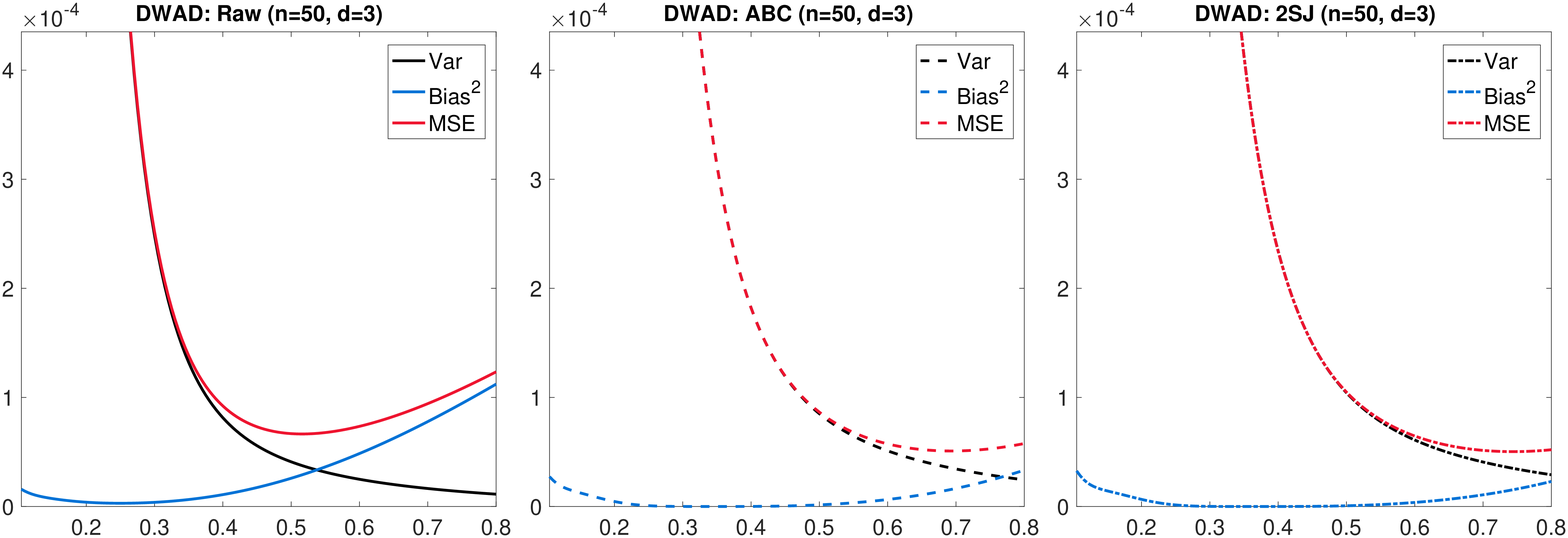}
\caption{DWAD: Decomposition of Mean Squared Error}
\label{fig:DWAD50MSE}
\end{figure}

\begin{figure}[!htbp]
\centering
\includegraphics[width=1\textwidth]{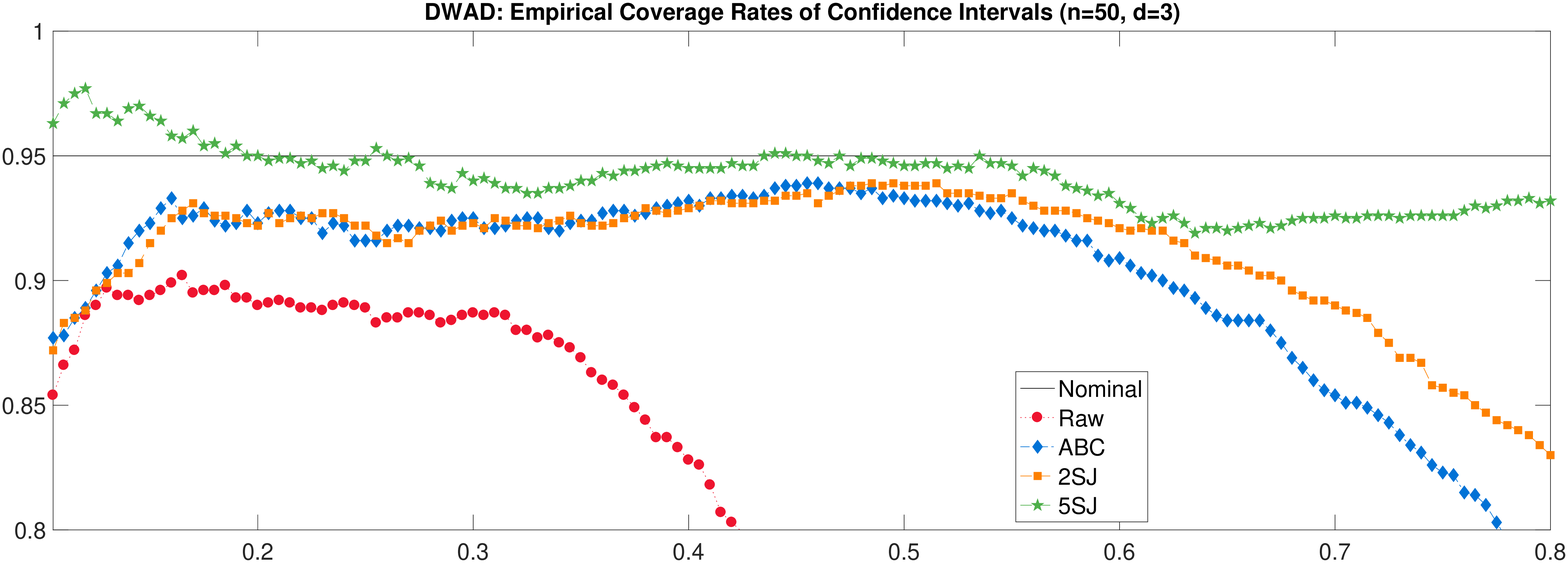}
\caption{DWAD: Empirical Coverage Rates of Confidence Intervals}
\label{fig:DWAD50CI}
\end{figure}

\begin{figure}[!htbp]
\centering
\includegraphics[width=1\textwidth]{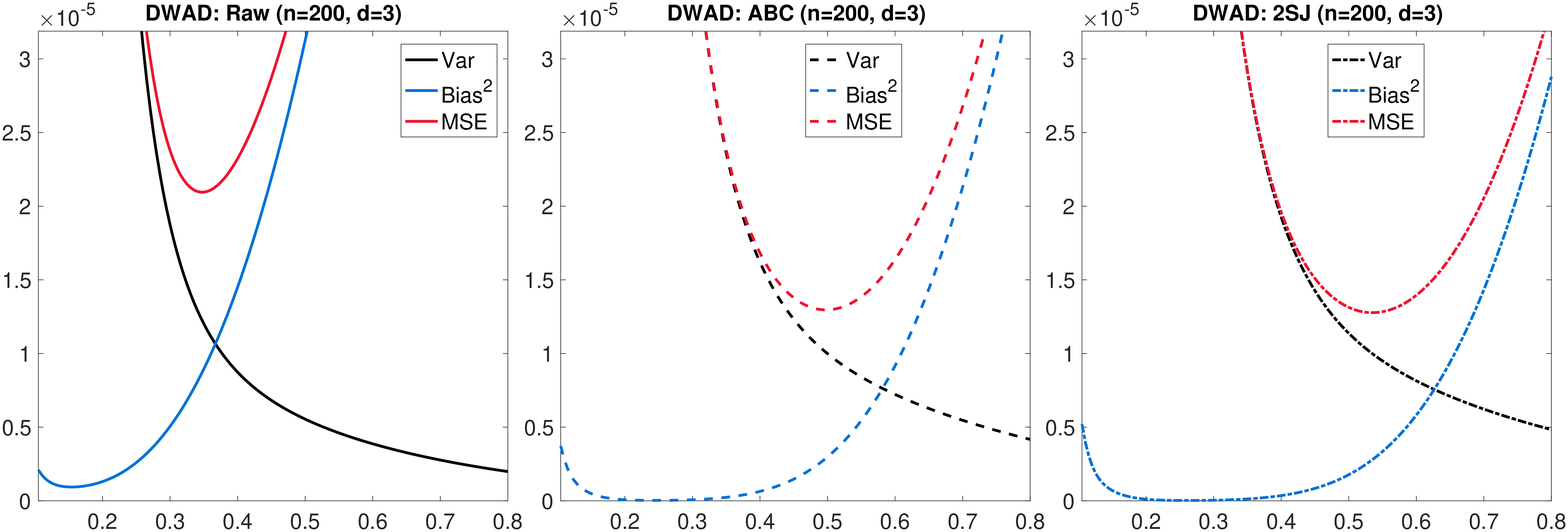}
\caption{DWAD: Decomposition of Mean Squared Error}
\label{fig:DWAD200MSE}
\end{figure}

\begin{figure}[!htbp]
\centering
\includegraphics[width=1\textwidth]{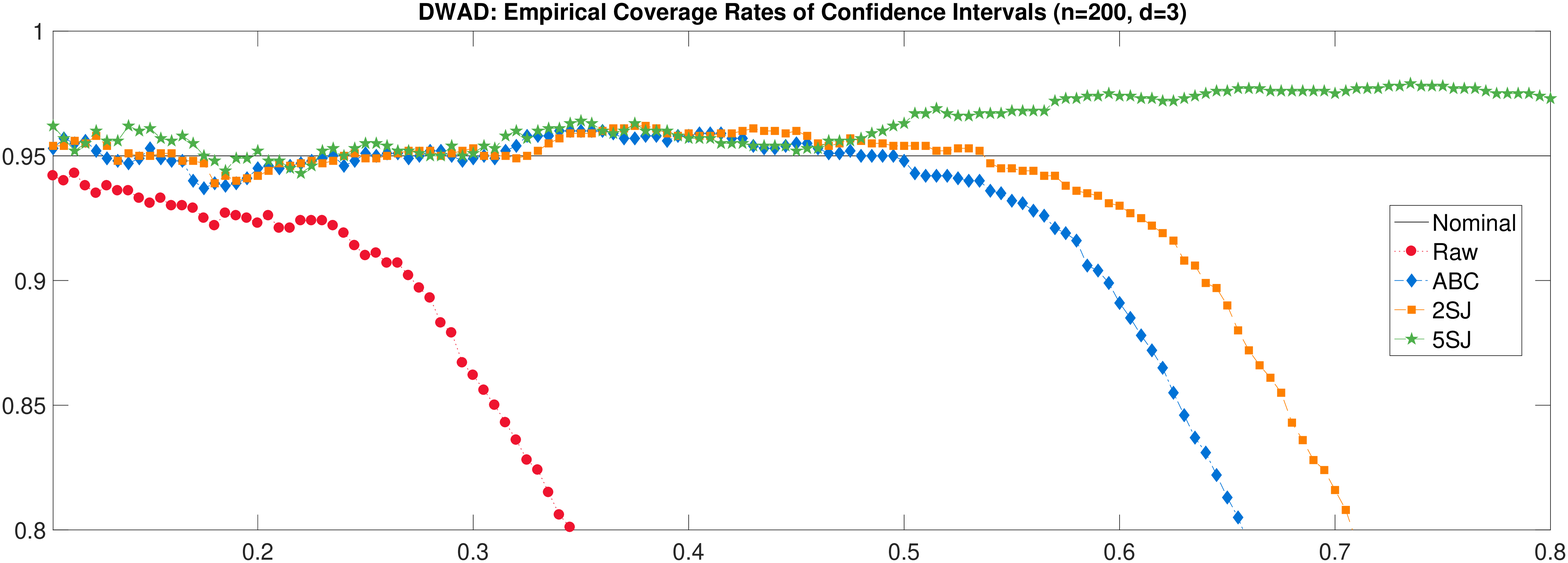}
\caption{DWAD: Empirical Coverage Rates of Confidence Intervals}
\label{fig:DWAD200CI}
\end{figure}

\begin{figure}[!htbp]
\centering
\includegraphics[width=1\textwidth]{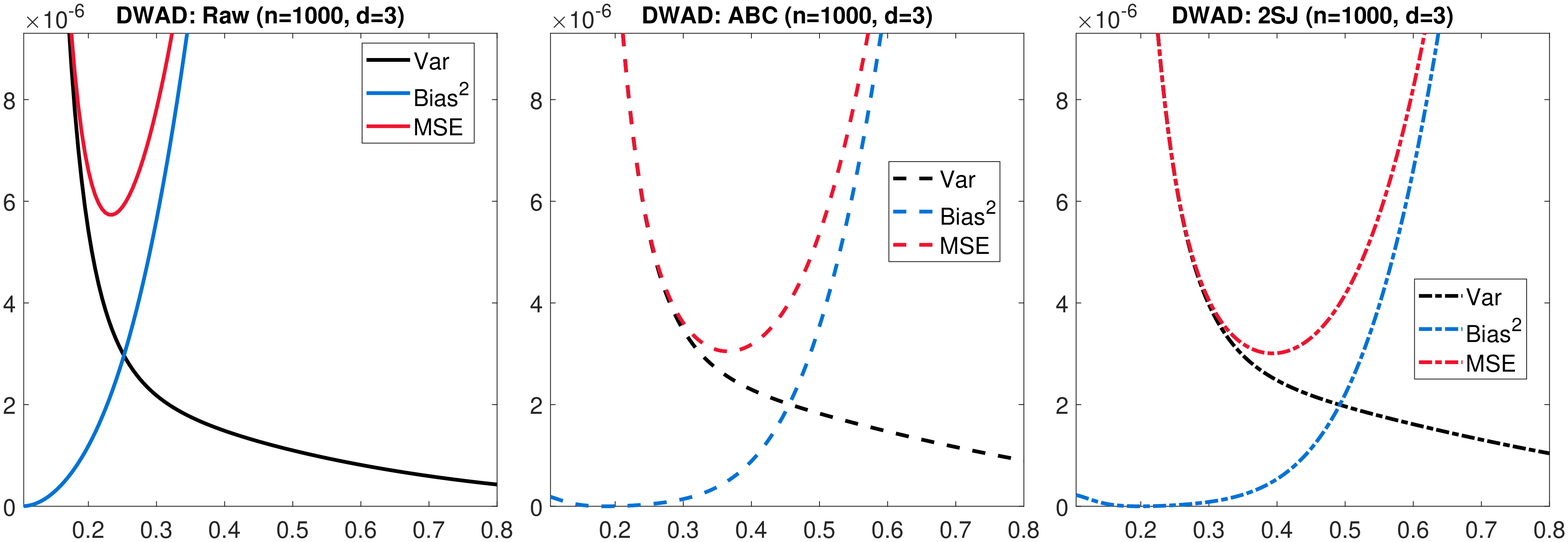}
\caption{DWAD: Decomposition of Mean Squared Error}
\label{fig:DWAD1000MSE}
\end{figure}

\begin{figure}[!htbp]
\centering
\includegraphics[width=1\textwidth]{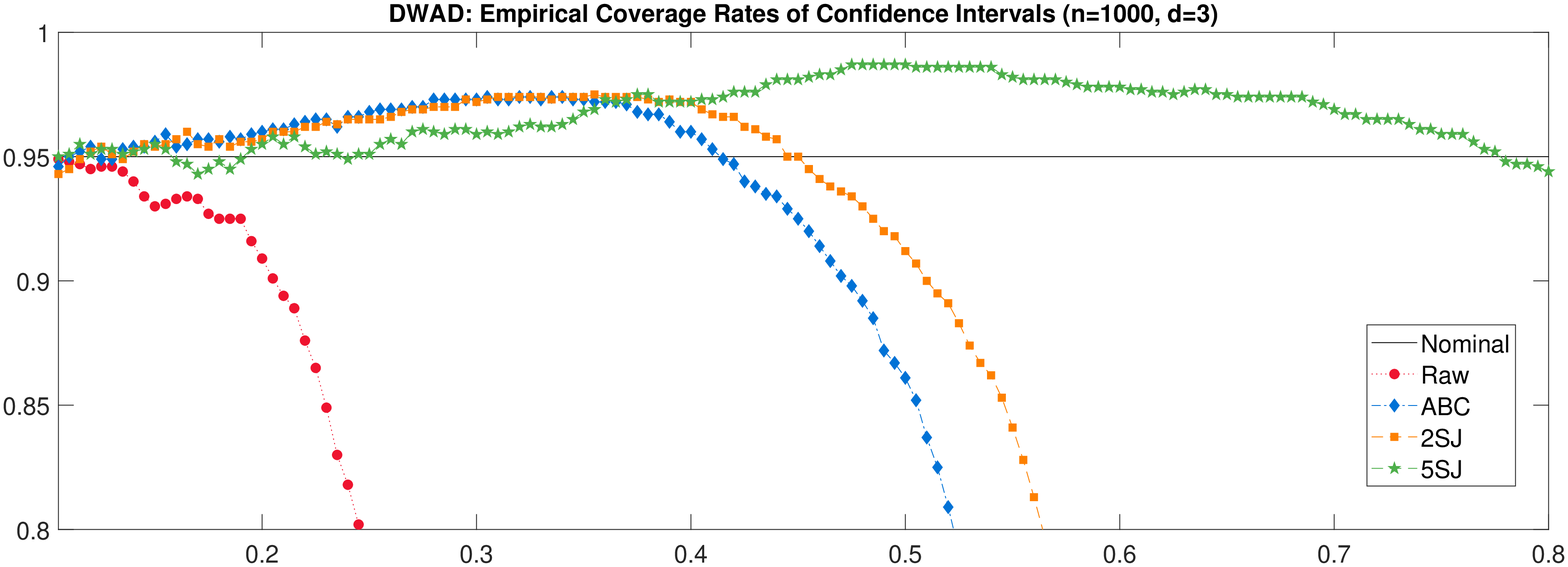}
\caption{DWAD: Empirical Coverage Rates of Confidence Intervals}
\label{fig:DWAD1000CI}
\end{figure}

\vfill
\newpage

\bigskip

\section{Additional Example: Average Treatment Effect Estimator} \label{sec:ATE}


Let $U$ and $V$ be two random vectors/variables. Denote by $a$ and $b$ two known scalar-valued functions (can be extended to the vector or matrix cases). Suppose $\{ X_i \}_{i=1}^n$ is an \text{i.i.d.} sample of $d$-dimensional random vectors. Hence, for $i\neq j$, we have $\bbE[ a(U_j) | X_j, X_i ] = \bbE[ a(U_j) | X_j]$ and $\bbE[ b(V_j) | X_j, X_i ] = \bbE[ b(V_j) | X_j]$.

Define the following estimators at some real vector $x$:
\begin{align}\label{eq:hatgammaUV}
	\whatgamma_n^U(x) = \frac{1}{n} \sum_{j=1}^n \calK_h^U (X_j, x) \, a(U_j) \quad \text{and} \quad
	\whatgamma_n^V(x) = \frac{1}{n} \sum_{j=1}^n \calK_h^V (X_j, x) \, b(V_j),
\end{align}
where $h$ is the bandwidth. As discussed in the main-text, when evaluated at some sample point $X_i$, we use the \textquotedblleft leave-one-out\textquotedblright{} versions of the above estimators. 

\begin{named-exmp}[Nadaraya-Watson estimator]
Consider a nonparametric regression model:
\begin{align*}
	Y = \bm{m}(X) + \epsilon, \,\, \text{where } \bbE[\epsilon|X] =0.
\end{align*}

Let $U = Y$, $V=1$, $a$ and $b$ both be the identity function, $\calK_h^U(s,t) = \calK_h^V(s,t) = K_h(s-t) = K\big( (s-t)/h \big) / h^{d}$, where $K$ is a scalar-valued kernel function. Then the Nadaraya-Watson estimator (see \cite{Nadaraya:1964} and \cite{Watson:1964}) of the unknown function $\bm{m}(\cdot)$ is given by
\begin{align*}
	\widehat{\bm{m}}(x) = \frac{\sum_{j=1}^n K\big( \frac{X_j - x}{h}  \big) Y_j}{\sum_{j=1}^n K\big( \frac{X_j - x}{h} \big)} =  \frac{\whatgamma_n^Y}{\whatgamma_n^1} =  \frac{\whatgamma_n^U}{\whatgamma_n^V}  
\end{align*}
We note that \cite{Nadaraya:1964} adopted a representation that is essentially the same as above (the author used $\varphi$ and $\psi$, instead of $\hatgamma^U$ and $\hatgamma^V$, and consider the case $d=1$). 
\end{named-exmp}

Let $D_i$ be a binary treatment for subject $i$. Denote by $Y_{i}^0$ and $Y_{i}^1$ the potential outcomes for subject $i$  in the cases of not being treated and being treated, respectively. The observed outcome is given by $Y_i \coloneqq Y_{i}^1 D_i + Y_{i}^0 (1-D_i)$. The average treatment effect (ATE) is then defined as $\theta \coloneqq \bbE[ Y_{i}^1 - Y_{i}^0]$. 

Let $X_i$ be a set of covariates such that the unconfoundedness assumption holds true:
\begin{align*}
	(Y_{i}^0, Y_{i}^1 ) \perp D_i \mid X_i.
\end{align*}
Therefore, conditional on covariates, potential outcomes are independent of the treatment assignment. Hence, we have $z_i = (Y_i, D_i, X_i^\intercal)^\intercal$.

Since $D_i$ is binary, one readily gets that $D_i^2 = D_i$, which is equivalent to $D_i(1-D_i) = 0$. This further implies that $Y_i D_i \equiv Y_{i}^1 D_i$ and $Y_i (1-D_i) \equiv Y_{i}^0 (1 - D_i)$. Let $\bm{e}(X_i) \coloneqq \bbE[D_i \,\big|\, X_i]$. One can deduce that
\begin{align*}
	& \bbE\Big[ \frac{Y_{i} D_i }{\bm{e}(X_i)}  - \frac{Y_{i} (1-D_i)}{1-\bm{e}(X_i)} \, \Big] = \bbE\Big[ \bbE\Big( \frac{Y_{i} D_i }{\bm{e}(X_i)}  - \frac{Y_{i} (1-D_i)}{1-\bm{e}(X_i)} \, \cond X_i \Big) \Big] \\
	=\,& \bbE\Big[ \frac{ \bbE( Y_{i} D_i \cond X_i) }{\bm{e}(X_i)} - \frac{ \bbE( Y_{i} (1-D_i) \cond X_i) }{1-\bm{e}(X_i)} \Big] = \bbE\Big[ \frac{ \bbE( Y_{i}^1 D_i \cond X_i) }{\bm{e}(X_i)} - \frac{ \bbE( Y_{i}^0 (1-D_i) \cond X_i) }{1-\bm{e}(X_i)} \Big] \\
	=\,& \bbE\Big[ \frac{ \bbE( Y_{i}^1 \cond X_i) \, \bbE( D_i \cond X_i ) }{\bm{e}(X_i)} - \frac{ \bbE( Y_{i}^0 \cond X_i) \, \bbE( D_i \cond X_i )}{1-\bm{e}(X_i)} \Big] = \bbE[ \bbE( Y_i^1 \cond X_i ) - \bbE( Y_i^0 \cond X_i ) ] \\
	=\,& \bbE[ Y_i^1 - Y_i^0 ] = \theta.
\end{align*}


For notation simplicity, let $Y_1 \coloneqq Y D$, $Y_0 \coloneqq Y(1-D)$, $D_1 \coloneqq D$, and $D_0 \coloneqq 1- D$. Define the following kernel-based estimators:
\begin{gather*}
	\whatgamma_n^{D_1}(X_i) = \frac{1}{n-1} \sum_{j\neq i} \calK_h(X_i, X_j) \, D_j \quad \whatgamma_n^{D_0}(X_i) = \frac{1}{n-1} \sum_{j\neq i} \calK_h(X_i, X_j) (1-D_j), \\
	\whatgamma_n^{1}(X_i) = \frac{1}{n-1} \sum_{j\neq i} \calK_h^1(X_i, X_j).
\end{gather*}

If one chooses the Nadaraya-Watson (NW) method, then we have 
\begin{align*}
	\calK_h(s, t) = \calK_h^1(s, t) = \frac{1}{h^d} K\big( \frac{s-t}{h} \big).
\end{align*}
Accordingly, one can estimate the ATE by 
\begin{align}
\begin{gathered} \label{eq:ATE-NW}
	\whattheta_n^{\,\texttt{NW}} \coloneqq \frac{1}{n} \sum_{i=1}^n \Big( \frac{\whatgamma_n^{1}(X_i)}{\whatgamma_n^{D_1}(X_i)} \, Y_i D_i - \frac{\whatgamma_n^{1}(X_i)}{\whatgamma_n^{D_0}(X_i)} \, Y_i (1-D_i) \Big).
\end{gathered}
\end{align}
This is the estimator considered by \cite{Hirano&Imbens&Ridder:2003}.

In this case, $\gamma=(\gamma^{1}, \gamma^{D_1}, \gamma^{D_0})$ and the $g$ function writes as
\begin{align*}
	g(z_i, \theta, \gamma) = \frac{\gamma^{1}(X_i)}{\gamma^{D_1}(X_i)} \, Y_i D_i - \frac{\gamma^{1}(X_i)}{\gamma^{D_0}(X_i)} \, Y_i (1-D_i) - \theta.
\end{align*}
We will use $\gamma_0^1$ to denote the density function of $X$, $\gamma_0^{D_1}(X_i) = \bm{e}(X_i) \gamma^1(X_i)$ and $\gamma_0^{D_0}(X_i) = [1-\bm{e}(X_i)] \gamma^1(X_i)$. Let $\gamma_0 = (\gamma_0^1, \gamma_0^{D_1}, \gamma_0^{D_0})^\intercal$.

Following the definition in the main-text, we readily get the following expressions for the two biases:
\begin{align*}
	\Banb = \frac{1}{n} & \sum_{i=1}^n \Big\{  \Big( \frac{1}{\gamma_0^{D_1}(X_i)} \big[ \bargamma_n^{1}(X_i) - \gamma_0^{1}(X_i) \big] - \frac{\gamma_0^{1}(X_i)}{[\gamma_0^{D_1}(X_i)]^2} \big[ \bargamma_n^{D_1}(X_i) - \gamma_0^{D_1}(X_i) \big] \Big) Y_{1i} \\
		& - \Big( \frac{1}{\gamma_0^{D_0}(X_i)} \big[ \bargamma_n^{1}(X_i) - \gamma_0^{1}(X_i) \big] - \frac{\gamma_0^{1}(X_i)}{[\gamma_0^{D_0}(X_i)]^2} \big[ \bargamma_n^{D_0}(X_i) - \gamma_0^{D_0}(X_i) \big] \Big) Y_{0i} \Big\}
\end{align*}
and
\begin{align*}
	\Bnl = \frac{1}{n} \sum_{i=1}^n \Big\{ & - \frac{1}{[\gamma_0^{D_1}(X_i)]^2} [\whatgamma_n^{1}(X_i) - \bargamma_n^{1}(X_i)] [\whatgamma_n^{D_1}(X_i) - \bargamma_n^{D_1}(X_i)] Y_i D_i \\
		& + \frac{\gamma_0^{1}(X_i)}{[\gamma_0^{D_1}(X_i)]^3} [\whatgamma_n^{D_1}(X_i) - \bargamma_n^{D_1}(X_i)]^2 Y_i D_i \\
		& + \frac{1}{[\gamma_0^{D_0}(X_i)]^2} [\whatgamma_n^{1}(X_i) - \bargamma_n^{1}(X_i)] [\whatgamma_n^{D_0}(X_i) - \bargamma_n^{D_0}(X_i)] Y_i (1-D_i) \\
		& - \frac{\gamma_0^{1}(X_i)}{[\gamma_0^{D_0}(X_i)]^3} [\whatgamma_n^{D_0}(X_i) - \bargamma_n^{D_0}(X_i)]^2 Y_i (1-D_i) \Big\} \\
	= \frac{1}{n} \sum_{i=1}^n \Big\{ & - \frac{1}{[\gamma_0^{D_1}(X_i)]^2} \Big( [\whatgamma_n^{1}(X_i) - \bargamma_n^{1}(X_i)] - \frac{\gamma_0^{1}(X_i)}{\gamma_0^{D_1}(X_i)} [\whatgamma_n^{D_1}(X_i) - \bargamma_n^{D_1}(X_i)] \Big) \\
		& \quad \times [\whatgamma_n^{D_1}(X_i) - \bargamma_n^{D_1}(X_i)] \times Y_i D_i\\
	& + \frac{1}{[\gamma_0^{D_0}(X_i)]^2} \Big( [\whatgamma_n^{1}(X_i) - \bargamma_n^{1}(X_i)] - \frac{\gamma_0^{1}(X_i)}{\gamma_0^{D_0}(X_i)} [\whatgamma_n^{D_0}(X_i) - \bargamma_n^{D_0}(X_i)] \Big) \\
		& \quad \times [\whatgamma_n^{D_0}(X_i) - \bargamma_n^{D_0}(X_i)] \times Y_i (1-D_i) \Big\}.
\end{align*}

Define
\begin{gather*}
	\psi^1(z_i, z_j) = K_h(X_i-X_j), \quad \psi^{D_1}(z_i, z_j) = K_h(X_i-X_j) D_j, \\
	\psi^{D_0}(z_i, z_j) = K_h(X_i-X_j) (1-D_j) \quad \psi = (\psi^1, \psi^{D_1}, \psi^{D_0})^\intercal, \\
	g_{\gamma\gamma}^{\prime\prime}(z_i) = \left( \begin{matrix}
			0 &  -\frac{Y_{1i}}{[\gamma^{D_1}(X_i)]^2} & -\frac{Y_{0i}}{[\gamma^{D_0}(X_i)]^2} \\
			-\frac{Y_{1i}}{[\gamma^{D_1}(X_i)]^2} & 2\frac{\gamma^1(X_i)Y_{1i}}{[\gamma^{D_1}(X_i)]^3} & 0 \\
			-\frac{Y_{0i}}{[\gamma^{D_0}(X_i)]^2} & 0 & 2\frac{\gamma^1(X_i)Y_{0i}}{[\gamma^{D_0}(X_i)]^3} 
		\end{matrix} \right).
\end{gather*}
Let $\phi(z_i, z_j) = \psi(z_i, z_j) - \bbE[ \psi(z_i, z_j) | z_i ]$ (note that $\bbE[ \psi(z_i, z_j) | z_i ] = \bbE[ \psi(z_i, z_j) | X_i ]$). Then we have
\begin{align*}
	g_{\gamma\gamma}^{\prime\prime}(z_i, \theta_0, \gamma_0, \phi(z_i, z_j), \phi(z_i, z_j)) = \phi(z_i, z_j)^\intercal g_{\gamma\gamma}^{\prime\prime}(z_i) \phi(z_i, z_j) = \phi(z_i, z_j)^{\otimes 2} \, \vect(g_{\gamma\gamma}^{\prime\prime}(z_i) ),
\end{align*}
where $\phi(z_i, z_j)^{\otimes 2} = \phi(z_i, z_j) \otimes \phi(z_i, z_j)$.

Since $D$ can only be 0 or 1, we can get 
\begin{gather*}
	[ \psi^{D_1}(z_i, z_j) ]^2 + [ \psi^{D_0}(z_i, z_j) ]^2 \equiv [ \psi^{1}(z_i, z_j) ]^2, \quad \psi^{D_1}(z_i, z_j) \psi^{D_0}(z_i, z_j) = 0, \\
	0\leq \psi^{D_1}(z_i, z_j) \psi^{1}(z_i, z_j) = [\psi^{D_1}(z_i, z_j)]^2  \leq [\psi^{1}(z_i, z_j)]^2, \\
	0\leq \psi^{D_0}(z_i, z_j) \psi^{1}(z_i, z_j) = [\psi^{D_0}(z_i, z_j)]^2 \leq [\psi^{1}(z_i, z_j)]^2.
\end{gather*}
Hence, we have $c [ \psi^{1}(z_i, z_j) ]^2 \leq \| \psi(z_i, z_j) \|^2 \leq C [ \psi^{1}(z_i, z_j) ]^2$ for some constant $c$ and $C$. It is easy to derive that
\begin{gather*}
	\bbE \big( [ \psi^1(z_i, z_j) ]^2 | z_i \big) = \bbE[ K_h(X_i-X_j)^2 | X_i ] = \frac{1}{h^d} \int_{\bbR} K(u)^2 \gamma_0^1(X_i - hu) du = \OP\Big( \frac{1}{h^d} \Big), \\
	\bbE\big( \bbE[\psi^1(z_i, z_j)^2 | z_i]^2 \big) = O\Big( \frac{1}{h^{2d}} \Big) \quad \bbE\big( \bbE[\psi^1(z_i, z_j)^2 | z_j]^2 \big) = O\Big( \frac{1}{h^{2d}} \Big), \\
	\bbE\big( \| \bbE[\psi(z_i, z_j) \otimes \psi(z_i,z_l) | z_j, z_l ] \|^2 \big) \leq C \, \bbE\big( \| \bbE[\psi^1(z_i, z_j) \psi^1(z_i,z_l) | z_j, z_l ] \|^2 \big) = O\Big( \frac{1}{h^d} \Big).
\end{gather*}

On the other hand, for example, we have
\begin{align*}
	\| \bbE[ \psi(z_i, z_j) | z_i ] \|^2  = \OP(1).
\end{align*}
Since $h\rightarrow 0$, this term is much smaller than the above ones in the limit.

\begin{lemma}
(i) Suppose the following two terms
\begin{align*}
	h^{2d} \, \bbE\Big\{ \Big[ \frac{Y_{1i}^2}{[\gamma_0^{D_1}(X_i)]^4} \Big( \frac{1-\bm{e}(X_i)}{\bm{e}(X_i)} \Big)^2 +\frac{Y_{0i}^2}{[\gamma_0^{D_0}(X_i)]^4} \Big( \frac{\bm{e}(X_i)}{1-\bm{e}(X_i)} \Big)^2 \Big] \, \bbE[K_h(X_i-X_j)^2 | X_i ]^2 \Big\}, \\
	h^{2d} \, \bbE\Big\{ \bbE\Big[ \Big( \frac{|Y_{1i} D_{1j}|}{[\gamma_0^{D_1}(X_i)]^2} \Big( \frac{1-\bm{e}(X_i)}{\bm{e}(X_i)} \Big) +\frac{|Y_{0i} D_{0j}|}{[\gamma_0^{D_0}(X_i)]^2} \Big( \frac{\bm{e}(X_i)}{1-\bm{e}(X_i)} \Big) \Big) \, K_h(X_i-X_j)^2  \, \big| \, X_j \Big]^2 \Big\},
\end{align*}
are of order $o(n^2)$ and the following one
\begin{align*}
	h^d \,\bbE\Big\{ & \bbE\Big[ \Big( \frac{Y_{1i}}{[\gamma_0^{D_1}(X_i)]^2}  \big(  D_{1j} D_{1l} - \frac{1}{2\bm{e}(X_i)} [ D_{1j} + D_{1l} ] \big) + \frac{Y_{0i}}{[\gamma_0^{D_0}(X_i)]^2} \big(  D_{0j} D_{0j}  \\
					& \quad - \frac{1}{2[1-\bm{e}(X_i)]} [ D_{0j} + D_{0l} ] \big) \Big) K_h(X_i - X_j) K_h(X_i - X_l) \, \Big| \, z_j, z_l \Big]^2 \Big\}
\end{align*}
is of order $o(n^2)$. If we choose $h^d \, \propto \, n^{2r-1}$ with $r>0$,  then we have
\begin{align*}
	\calB^{\texttt{NL}} = \bbE[ \Bnl ] = O(n^{2r}) \quad \text{and} \quad \| \Bnl - \calB^{\texttt{NL}} \| = \oP(n^{-1/2}).
\end{align*}

(ii) Define
\begin{align*}
	\xi(1)_i &= \frac{Y_{1i}}{\gamma_0^{D_1}(X_i)} \Big( \big[ \bargamma_n^{1}(X_i) - \gamma_0^{1}(X_i) \big] - \frac{1}{\bm{e}(X_i)} \big[ \bargamma_n^{D_1}(X_i) - \gamma_0^{D_1}(X_i) \big] \Big), \\
	\xi(0)_i &=  \frac{Y_{0i}}{\gamma_0^{D_0}(X_i)} \Big( \big[ \bargamma_n^{1}(X_i) - \gamma_0^{1}(X_i) \big] - \frac{1}{1-\bm{e}(X_i)} \big[ \bargamma_n^{D_0}(X_i) - \gamma_0^{D_0}(X_i) \big] \Big).
\end{align*}
If $\bbE[\xi(1)_i + \xi(0)_i] = O(h^m)$ and $\Var[ \xi(1)_i + \xi(0)_i ] = o(1)$, then
\begin{align*}
	\calB^{\texttt{ANB}} = \bbE[ \xi(1)_i + \xi(0)_i ] = O(h^s) \quad \text{and} \quad  \| \Banb - \calB^{\texttt{ANB}} \| = \oP(n^{-1/2}),
\end{align*}
where $s=m(2r-1)/d$.
\end{lemma}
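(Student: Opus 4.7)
The plan is to verify this lemma by reducing it to the generic sufficient conditions in Lemma \ref{lem:BO}, with the ATE-specific $\psi$ and $g_{\gamma\gamma}^{\prime\prime}$ worked out explicitly. First, I identify the vector-valued kernel $\psi = (\psi^{1}, \psi^{D_1}, \psi^{D_0})^\intercal$ and use the crucial binary-treatment identities $D_j^2 = D_j$, $D_{1j} D_{0j} = 0$, and $\psi^{D_1} + \psi^{D_0} \equiv \psi^{1}$ to obtain $c[\psi^{1}(z_i,z_j)]^2 \leq \|\psi(z_i,z_j)\|^2 \leq C[\psi^{1}(z_i,z_j)]^2$, so every moment bound in Lemma \ref{lem:BO} reduces to a moment of $K_h(X_i - X_j)$. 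Under $h^d \propto n^{2r-1}$, standard change-of-variables gives $\bbE[K_h(X_i-X_j)^2] = O(h^{-d}) = O(n^{1-2r})$, $\bbE[K_h(X_i-X_j)^4] = O(h^{-3d})$, $\bbE(\bbE[K_h^2|X_i]^2) = O(h^{-2d})$, and $\bbE(\bbE[K_h(X_i-X_j)K_h(X_i-X_l)|X_j,X_l]^2) = O(h^{-d})$.

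For part (i), I invoke the decomposition from the proof of Lemma \ref{lem:BO}:
\begin{align*}
\whatG_{n,\gamma\gamma}^{\,\prime\prime}(\theta_0,\gamma_0,\hatgamma_n - \bargamma_n, \hatgamma_n-\bargamma_n) = \tfrac{1}{n-1} U_{n,1} + \tfrac{n-2}{n-1} U_{n,2},
\end{align*}
where $U_{n,2}$ is completely degenerate with $\bbE[U_{n,2}]=0$. The leading term is $\calB^{\texttt{NL}} = \tfrac{1}{n-1}\bbE[U_{n,1}] = \tfrac{1}{n-1}\bbE[\phi(z_i,z_j)^\intercal g_{\gamma\gamma}^{\prime\prime}(z_i) \phi(z_i,z_j)]$, and substituting the explicit matrix $g_{\gamma\gamma}^{\prime\prime}(z_i)$ together with the identities $\bbE[D_j K_h(X_i-X_j)^2|X_i] = \bm{e}(X_i)\bbE[K_h^2|X_i] + O(h^{m-d})$ and analogues for $(1-D_j)$ yields $\calB^{\texttt{NL}} = O(1/(nh^d)) = O(n^{-2r})$ after the standard cancellations among the three $\gamma$-components. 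For $\|\Bnl - \calB^{\texttt{NL}}\| = \oP(n^{-1/2})$, I use Chebyshev on the variance bounds from Lemma \ref{lem:BO}: the first hypothesis in the statement controls $\Var(\eta_{1,1})$ for $U_{n,1}$, the second hypothesis controls the conditional second moment needed for $\Var(\eta_{2,2})$ in $U_{n,2}$, and the third hypothesis encodes the remaining $\Var(\eta_{2,2})$-contribution; $\eta_{2,1}\equiv 0$ by the same centering argument used in the general proof.

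For part (ii), the averaged nonparametric bias decomposes into an i.i.d.\ sample mean:
\begin{align*}
\Banb = \tfrac{1}{n}\sum_{i=1}^n \big(\xi(1)_i + \xi(0)_i\big),
\end{align*}
where the two $\xi$ terms are exactly the residual terms shown in the lemma after factoring $\gamma_0^1(X_i)/\gamma_0^{D_k}(X_i) = 1/\bm{e}(X_i)$ or $1/(1-\bm{e}(X_i))$. Hence $\calB^{\texttt{ANB}} = \bbE[\xi(1)_i + \xi(0)_i] = O(h^m) = O(n^{-s})$ with $s = m(1-2r)/d$ (I expect the stated $s=m(2r-1)/d$ to be a sign typo), and Chebyshev combined with the assumed $\Var[\xi(1)_i+\xi(0)_i]=o(1)$ gives $\|\Banb - \calB^{\texttt{ANB}}\|^2 \leq n^{-1}\Var[\xi(1)_i+\xi(0)_i] = o(n^{-1})$, which is $\oP(n^{-1/2})$.

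The main obstacle is bookkeeping rather than depth: carefully expanding $\phi(z_i,z_j)^\intercal g_{\gamma\gamma}^{\prime\prime}(z_i) \phi(z_i,z_j)$ and showing that the off-diagonal entries $\pm Y_{ki}/[\gamma_0^{D_k}(X_i)]^2$ combine with $\bbE[D_jK_h^2|X_i]$ to produce the clean leading-order expression $O(n^{-2r})$, while confirming that the three listed moment hypotheses are precisely what the generic Lemma \ref{lem:BO} demands (after $\|\psi\|^2 \asymp \psi^{1\,2}$). The i.i.d.\ part (ii) is essentially immediate.
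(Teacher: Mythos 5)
Your overall architecture is the same as the paper's: the paper also proves part (i) by reducing to the proof of Lemma \ref{lem:BO} — using the binary identities to get $c[\psi^{1}]^2 \le \|\psi\|^2 \le C[\psi^{1}]^2$, expanding $\phi^\intercal g_{\gamma\gamma}^{\prime\prime}(z_i)\phi$ with the explicit matrix and $\gamma_0^1/\gamma_0^{D_k} = 1/\bm{e}$ or $1/(1-\bm{e})$ to get $\calB^{\texttt{NL}} = O(1/(nh^d)) = O(n^{-2r})$, and bounding the projection variances by the three displayed moments — and part (ii) is dismissed there exactly as you do, as an i.i.d.\ average plus Chebyshev (your reading of the exponent, $\calB^{\texttt{ANB}} = O(h^m) = O(n^{-m(1-2r)/d})$, is the intended one).

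The one concrete defect is your assignment of the three moment hypotheses to the U-statistic projection variances. In the proof of Lemma \ref{lem:BO} the kernel $\eta_1$ of $U_{n,1}$ is the \emph{symmetrization} of $g_{\gamma\gamma}^{\prime\prime}(z_i,\theta_0,\gamma_0,\phi(z_i,z_j),\phi(z_i,z_j))$ over the pair, so $\Var(\eta_{1,1})$ is controlled by \emph{both} $\Var\big(\bbE[g_{\gamma\gamma}^{\prime\prime}(\cdot,\phi(z_i,z_j),\phi(z_i,z_j))\mid z_i]\big)$ and $\Var\big(\bbE[g_{\gamma\gamma}^{\prime\prime}(\cdot,\phi(z_i,z_j),\phi(z_i,z_j))\mid z_j]\big)$; in the ATE case these are precisely the first and the \emph{second} displayed moments, respectively (note the second hypothesis contains the single squared kernel $K_h(X_i-X_j)^2$ conditioned on $X_j$, with weights $D_{1j},D_{0j}$, i.e., a diagonal pair $(i,j)$ — it has nothing to do with $U_{n,2}$). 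Only the third hypothesis, with the product $K_h(X_i-X_j)K_h(X_i-X_l)$ conditioned on $(z_j,z_l)$, bounds $\Var(\eta_{2,2})$ for the degenerate $U_{n,2}$. As written, your plan asks the second hypothesis to bound $\Var(\eta_{2,2})$, which it cannot, and it leaves the $z_j$-conditional half of $\Var(\eta_{1,1})$ without any designated control. Since the lemma's hypotheses are exactly these three expectations, the verification must match them term by term; the repair is routine (under the Hirano--Imbens--Ridder boundedness conditions the $z_j$-conditional half is of the same order $O(h^{-2d})$ as the kernel moments you already computed), but the mapping should be corrected. The remaining small omissions ($\Var(\eta_{1,2})\le C\,\bbE[\|\psi\|^4]=O(h^{-3d})=o(n^3)$ and the $\eta_{2,3}$ term, both automatic when $nh^d\to\infty$ with bounded weights) are left implicit in the paper as well and are not a problem.
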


In Assumption 2, \cite{Hirano&Imbens&Ridder:2003} assume the support of $X$ is compact and the density of $X$ is bounded and bounded away from 0. In Assumption 4, the authors assume that the selection probability $e(x)$ satisfies $0<\underline{p} \leq e(x) \leq \bar{p} <1$. Under these conditions, the assumptions of the above lemma are all satisfied.

\begin{proof}

(i) First, we can derive that
\begin{align*}
	& \big\| \bbE[ g_{\gamma\gamma}^{\prime\prime}(z_i, \theta_0, \gamma_0, \phi(z_i, z_j), \phi(z_i, z_j))  ] \big\| \\
	=\,& \big\| \bbE\big( g_{\gamma\gamma}^{\prime\prime}(z_i, \theta_0, \gamma_0, \phi(z_i, z_j), \phi(z_i, z_j)) | z_i ] \big) \big\|
	= \big\| \bbE\big( \bbE[ \phi(z_i, z_j)^{\otimes 2} | z_i ] \, \vect(g_{\gamma\gamma}^{\prime\prime}(z_i) ) \big) \big\| \\
	=\,& 2 \, \Big\| \bbE\Big\{ \frac{-Y_{1i}}{[\gamma_0^{D_1}(X_i)]^2} \Big( \bbE[\phi^{D_1}(z_i, z_j)^2 | z_i ] - \frac{\gamma_0^{1}(X_i)}{\gamma_0^{D_1}(X_i)} \bbE[\phi^{D_1}(z_i, z_j) \phi^1(z_i, z_j) | z_i ]  \Big) \\
	& \quad + \frac{Y_{0i}}{[\gamma_0^{D_0}(X_i)]^2} \Big( \bbE[\phi^{D_0}(z_i, z_j)^2 | z_i ] - \frac{\gamma_0^{1}(X_i)}{\gamma_0^{D_0}(X_i)} \bbE[\phi^{D_0}(z_i, z_j) \phi^1(z_i, z_j) | z_i ]  \Big) \Big\} \Big\| \\
	=\,& 2 \, \Big\| \bbE\Big\{ \frac{-Y_{1i}}{[\gamma_0^{D_1}(X_i)]^2} \Big( \bbE[\phi^{D_1}(z_i, z_j)^2 | z_i ] - \frac{1}{\bm{e}(X_i)} \bbE[\phi^{D_1}(z_i, z_j) \phi^1(z_i, z_j) | z_i ]  \Big) \\
		& \quad + \frac{Y_{0i}}{[\gamma_0^{D_0}(X_i)]^2} \Big( \bbE[\phi^{D_0}(z_i, z_j)^2 | z_i ] - \frac{1}{1-\bm{e}(X_i)} \bbE[\phi^{D_0}(z_i, z_j) \phi^1(z_i, z_j) | z_i ]  \Big) \Big\} \Big\| \\
	\leq\,& C \, \bbE\Big\{ \frac{|Y_{1i}|}{[\gamma_0^{D_1}(X_i)]^2} \Big( \frac{1}{\bm{e}(X_i)} -1\Big) \bbE[\psi^{D_1}(z_i, z_j)^2 | z_i ] \\
			& \quad + \frac{|Y_{0i}|}{[\gamma_0^{D_0}(X_i)]^2} \Big( \frac{1}{1-\bm{e}(X_i)} - 1 \Big) \bbE[\psi^{D_0}(z_i, z_j)^2 | z_i ] \Big\} \\
	\leq\,& C \, \bbE\Big\{ \Big[ \frac{|Y_{1i}|}{[\gamma_0^{D_1}(X_i)]^2} \frac{1-\bm{e}(X_i)}{\bm{e}(X_i)} + \frac{|Y_{0i}|}{[\gamma_0^{D_0}(X_i)]^2} \frac{\bm{e}(X_i)}{1-\bm{e}(X_i)} \Big] \, \bbE\big( [ \psi^1(X_i, X_j) ]^2 | X_i \big)  \Big\}
\end{align*}
Therefore, if 
\begin{align*}
	& \bbE\Big\{ \Big[ \frac{|Y_{1i}|}{[\gamma_0^{D_1}(X_i)]^2} \frac{1-\bm{e}(X_i)}{\bm{e}(X_i)} + \frac{|Y_{0i}|}{[\gamma_0^{D_0}(X_i)]^2} \frac{\bm{e}(X_i)}{1-\bm{e}(X_i)} \Big] \, \bbE\big( [ \psi^1(X_i, X_j) ]^2 | X_i \big)  \Big\} \\
	\leq\,& \bbE\big\{ \bbE\big( [ \psi^1(X_i, X_j) ]^2 | X_i \big) \big\} = \bbE\big( [ \psi^1(X_i, X_j) ]^2 \big),
\end{align*}
then we will have
\begin{align*}
	\calB^{\texttt{NL}} = \frac{1}{2(n-1)} \bbE[ g_{\gamma\gamma}^{\prime\prime}(z_i, \theta_0, \gamma_0, \hatgamma_n - \bargamma_n, \hatgamma_n - \bargamma_n)  ] = O(n^{2r}).
\end{align*}

Next, we can show that
\begin{align*}
	& \Var\big( \bbE[g_{\gamma\gamma}^{\prime\prime}(z_i, \theta_0, \gamma_0, \phi(z_i, z_j), \phi(z_i, z_j)) \,|\, z_i ] \big) \\
	\leq\,& \bbE\big( \bbE[g_{\gamma\gamma}^{\prime\prime}(z_i, \theta_0, \gamma_0, \phi(z_i, z_j), \phi(z_i, z_j)) \,|\, z_i ]^2 \big) \\
	\leq\,& C \, \bbE\big( \bbE[g_{\gamma\gamma}^{\prime\prime}(z_i, \theta_0, \gamma_0, \psi(z_i, z_j), \psi(z_i, z_j)) \,|\, z_i ]^2 \big) \\
	\leq\,& C \, \bbE\Big\{ \Big[ \frac{Y_{1i}^2}{[\gamma_0^{D_1}(X_i)]^4} \Big( \frac{1-\bm{e}(X_i)}{\bm{e}(X_i)} \Big)^2 +\frac{Y_{0i}^2}{[\gamma_0^{D_0}(X_i)]^4} \Big( \frac{\bm{e}(X_i)}{1-\bm{e}(X_i)} \Big)^2 \Big] \, \bbE[K_h(X_i-X_j)^2 | X_i ]^2 \Big\}
\end{align*}
and
\begin{align*}
	& \Var\big( \bbE[g_{\gamma\gamma}^{\prime\prime}(z_i, \theta_0, \gamma_0, \phi(z_i, z_j), \phi(z_i, z_j)) \,|\, z_j ] \big) \\
	\leq\,& \bbE\big( \bbE[g_{\gamma\gamma}^{\prime\prime}(z_i, \theta_0, \gamma_0, \phi(z_i, z_j), \phi(z_i, z_j)) \,|\, z_j ]^2 \big) \\
	\leq\,& C \, \bbE\big( \bbE[g_{\gamma\gamma}^{\prime\prime}(z_i, \theta_0, \gamma_0, \psi(z_i, z_j), \psi(z_i, z_j)) \,|\, z_j ]^2 \big) \\
	\leq\,& C \, \bbE\Big\{ \bbE\Big[ \Big( \frac{|Y_{1i} D_{1j}|}{[\gamma_0^{D_1}(X_i)]^2} \Big( \frac{1-\bm{e}(X_i)}{\bm{e}(X_i)} \Big) +\frac{|Y_{0i} D_{0j}|}{[\gamma_0^{D_0}(X_i)]^2} \Big( \frac{\bm{e}(X_i)}{1-\bm{e}(X_i)} \Big) \Big) \, K_h(X_i-X_j)^2  \, \big| \, X_j \Big]^2 \Big\}.
\end{align*}
Hence, if the above two right-hand-side terms are of order $o(n^2)$,

Finally, note that
\begin{align*}
	& \Var\big( \bbE[g_{\gamma\gamma}^{\prime\prime}(z_i, \theta_0, \gamma_0, \phi(z_i, z_j), \phi(z_i, z_l)) \,|\, z_j, z_l ] \big) \\
	\leq\,& \bbE\big( \bbE[g_{\gamma\gamma}^{\prime\prime}(z_i, \theta_0, \gamma_0, \phi(z_i, z_j), \phi(z_i, z_l)) \,|\, z_j, z_l ]^2 \big) \\
	\leq\,& C \, \bbE\big( \bbE[g_{\gamma\gamma}^{\prime\prime}(z_i, \theta_0, \gamma_0, \psi(z_i, z_j), \psi(z_i, z_l)) \,|\, z_j, z_l ]^2 \big) \\
	\leq\,& C \, \bbE\Big\{ \bbE\Big[ \frac{-Y_{1i}}{[\gamma_0^{D_1}(X_i)]^2} \Big( \psi^{D_1}(z_i, z_j) \psi^{D_1}(z_i, z_l) - \frac{1}{2\bm{e}(X_i)} [ \psi^{D_1}(z_i, z_j)  \psi^1(z_i, z_l) \\
		&\quad\quad + \psi^{D_1}(z_i, z_l)  \psi^1(z_i, z_j) ]  \Big) \\
			& \quad + \frac{Y_{0i}}{[\gamma_0^{D_0}(X_i)]^2} \Big( \psi^{D_0}(z_i, z_j) \psi^{D_0}(z_i, z_l) - \frac{1}{2[1-\bm{e}(X_i)]} [ \psi^{D_0}(z_i, z_j)  \psi^1(z_i, z_l) \\
		&\quad\quad + \psi^{D_0}(z_i, z_l)  \psi^1(z_i, z_j) ]  \Big) \, \Big| \, z_j, z_l  \Big]^2 \Big\} \\
	= \,& C \, \bbE\Big\{ \bbE\Big[ \Big( \frac{Y_{1i}}{[\gamma_0^{D_1}(X_i)]^2}  \big(  D_{1j} D_{1l} - \frac{1}{2\bm{e}(X_i)} [ D_{1j} + D_{1l} ] \big) + \frac{Y_{0i}}{[\gamma_0^{D_0}(X_i)]^2} \big(  D_{0j} D_{0j}  \\
				& \quad - \frac{1}{2[1-\bm{e}(X_i)]} [ D_{0j} + D_{0l} ] \big) \Big) K_h(X_i - X_j) K_h(X_i - X_l) \, \Big| \, z_j, z_l \Big]^2 \Big\}.
\end{align*}
Then the conclusions readily follows the proof of Lemma \ref{lem:BO}.

(ii) Since $\{\xi(1)_i + \xi(0)_i\}_{i=1}^n$ is an \text{i.i.d.} sequence, the proof is essentially the same as illustrated in the main-text. 

\end{proof}

\end{appendices}

\end{document}